\numberwithin{equation}{section}
\definecolor{darkblue}{rgb}{0.0,0,0.5}
\newcommand{\R}{\mathbb{R}}
\newcommand{\E}{\mathbb{E}}
\renewcommand{\H}{\mathcal{H}}
\newcommand{\B}{\mathbb{B}}
\newcommand{\rad}{\varepsilon}
\newcommand{\norm}[1]{\left\|{#1}\right\|} 
\newcommand{\defeq}{:=}
\newcommand{\normal}{\mathsf{N}}  
\newcommand{\N}{\mathbb{N}}
\renewcommand{\P}{\mathbb{P}}
\newtheorem{lemma}{Lemma}
\newtheorem{corollary}{Corollary}
\newtheorem{assumption}{Assumption}[section]
\providecommand{\customgenericname}{}
\newcommand{\grad}{\nabla}
\newcommand{\dist}{{\rm dist}}
\renewcommand{\theassumption}{\Alph{assumption}}
\newcommand{\D}{\mathbb{D}}
\renewcommand{\H}{\mathbb{H}}
\newcommand{\one}{\mathbf{1}}
\newcommand{\eps}{\epsilon}
\newcommand{\sm}{^{sm}}
\newcommand{\ns}{^{ns}}
\newcommand{\loss}{h}
\newcommand{\hh}{\psi}
\newcommand{\openset}{\mathcal{O}}
\theoremstyle{thmstyleone}%
\newtheorem{theorem}{Theorem}
\newtheorem{proposition}[theorem]{Proposition}%
\theoremstyle{thmstyletwo}%
\newtheorem{example}{Example}%
\newtheorem{remark}{Remark}%
\theoremstyle{thmstylethree}%
\newtheorem{definition}{Definition}%
\begin{document}

\title[On the Uniform Convergence of Subdifferentials in Stochastic Optimization and Learning]{On the Uniform Convergence of Subdifferentials in Stochastic Optimization and Learning}


\author*{\fnm{Feng} \sur{Ruan}}\email{fengruan@northwestern.edu}

\affil*{\orgdiv{Department of Statistics and Data Science}, \orgname{Northwestern University}}


\abstract{
\phantom{.....}
We investigate the uniform convergence of subdifferential mappings from empirical risk to population risk in nonsmooth, nonconvex stochastic optimization. This question is key to understanding how empirical stationary points approximate population ones, yet characterizing this convergence remains a fundamental challenge due to the set-valued and nonsmooth nature of subdifferentials. This work establishes a general reduction principle: for weakly convex stochastic objectives,  over any open subset of the domain, we show that a uniform bound on the convergence of selected subgradients—chosen arbitrarily from subdifferential sets—yields a corresponding uniform bound on the Hausdorff distance between the subdifferentials. This deterministic result reduces the study of set-valued subdifferential convergence to simpler vector-valued subgradient convergence. We apply this reduction to derive sharp uniform convergence rates for subdifferential mappings in stochastic convex-composite optimization, without relying on differentiability assumptions on the population risk. These guarantees clarify the landscape of nonsmooth empirical objectives and offer new insight into the geometry of optimization problems arising in robust statistics and related applications.
}

\keywords{Subdifferential, Subgradient, Uniform Convergence, Population Risk, Sample Average Approximation, Weak Convexity, Hausdorff Metric.}



\maketitle

\section{Introduction} 
\indent\indent
At the core of nonsmooth, nonconvex stochastic optimization lies a fundamental challenge---characterizing 
the statistical uniform convergence behavior of set-valued subdifferential mappings~\cite{ShapiroXu07}. 
This characterization is not only key to understanding the convergence of stationary points defined by subdifferentials but also essential for analyzing statistical estimation and the performance of numerical algorithms.
Consequently, understanding the uniform convergence of these subdifferential mappings for the empirical risk plays a pivotal role in stochastic programming~\cite[Section 7.2]{ShapiroDeRu21}. 
Nevertheless, existing approaches to this problem often require the population risk to be continuously differentiable. Such smoothness assumptions can obscure the geometric structure of nonsmooth models and limit the applicability of theoretical guarantees in real-world problems (see Section~\ref{sec:related-works} for detailed discussions on related works).

In this work, we provide a new perspective on this problem by establishing a general deterministic principle that links the uniform convergence of subgradient selections to the uniform convergence of subdifferential sets. Specifically, we show that for any pair of weakly convex functions, the uniform distance between their subdifferential mappings (measured in Hausdorff distance) is bounded above by the uniform distance between any pair of subgradient selections over any open domain  (Theorem~\ref{theorem:bound-HD-by-MS}). This result bypasses any smoothness assumptions and allows us to characterize subdifferential set convergence by analyzing simpler vector-valued subgradient mappings. Importantly, it enables the analysis of the uniform convergence of subdifferentials in stochastic optimization settings via the uniform convergence of subgradients, leading to sharp statistical rates. 


We instantiate this principle in the context of stochastic weakly convex objectives, a broad class of nonsmooth, nonconvex functions encompassing many widely studied models~\cite{PoliquinRo92, PoliquinRo96, DavisDr20}. Our main application is to stochastic convex-composite objectives~\cite{Burke85, DuchiRu18}, which include robust phase retrieval~\cite{DuchiRu19}, blind deconvolution~\cite{CharisopoulosDaDiDr21}, 
matrix sensing~\cite{LiZhMaVi20, CharisopoulosChDaDiDiDr21}, 
conditional Value-at-Risk~\cite{RockafellarUr00}, and nonlinear quantile regressions~\cite{KoenkerPa96}. 
These models appear across imaging, statistics, machine learning, and risk management. 
For this class, we derive uniform convergence rates for subdifferentials by studying the convergence behavior of stochastic subgradient mappings (Theorem~\ref{theorem:basic-statistical-learning-result} and Theorem~\ref{theorem:final-result}). Our analysis complements prior work by (i) removing differentiability assumptions on the population risk, and (ii) establishing tight convergence rates under the Hausdorff distance, a topologically weaker notion than graphical convergence that more directly captures the geometry of subdifferential sets.
For instance, in the setting of robust phase retrieval~\cite{DuchiRu19, DavisDrPa20}, we establish a 
sharp convergence rate of $\sqrt{d/m}$ (modulo logarithmic factors on $d, m$) for the uniform convergence of subdifferential mappings under the Hausdorff distance, 
where $d$ is the problem's dimension and $m$ is the sample size (Corollary~\ref{corollary:phase-retrieval}). This is complementary to the state-of-the-art $\sqrt[4]{d/m}$ rate, which is 
established under the graphical distance of subdifferentials using an alternative approach based on Attouch's 
epigraphical convergence theorem~\cite{Attouch77, DavisDr22}. 
Our results lead to new understanding into nonsmooth empirical risk landscapes, including how the locations of stationary points evolve with sample size—providing a sharper understanding of finite-sample behavior in high-dimensional stochastic optimization.



\subsection{Related Work and Contributions} 
\label{sec:related-works}
This work contributes to the extensive literature on characterizing the behavior of uniform convergence of subdifferentials in sample 
average approximation settings, for which we review. Notably, we saturate our contributions to existing work focused on 
settings where the objective functions defining the risks are simultaneously nonconvex and nonsmooth.

A significant body of literature has utilized set-valued variational analysis~\cite{RockafellarWe98} and random set theory~\cite{Molchanov05} 
to study the uniform convergence of subdifferentials from empirical to population risks~\cite{ShapiroXu07}. 
This technique is popular because subdifferential mappings of many nonsmooth and nonconvex 
functions—including weakly convex functions—often exhibit structural properties such as outer 
semicontinuity, and the subdifferentials themselves form convex sets, unlike discontinuous subgradient mappings~\cite{RockafellarWe98}.
This technique discretizes the parameter space, achieving initial subdifferential set convergence at discrete points 
via Artstein and Vitale's strong law of large numbers for random convex sets~\cite{ArtsteinVi75}. 
The core assumption is then the subdifferential mapping of the population risk is 
\emph{continuous} under the Hausdorff metric, which, in many cases (e.g., weakly convex functions), is equivalent to
continuous differentiability. 
This continuity—not merely outer semicontinuity—is 
crucial, see~\cite[Remark 2]{ShapiroXu07}, 
as it enables the extension of the initial convergence from discrete points to uniform convergence of subdifferential sets over the entire parameter space.
Without continuity, using outer-semicontinuity this approach can still ensure uniform convergence of \emph{enlarged} empirical subdifferentials to
\emph{enlarged} population subdifferentials; however, this does not apply to the \emph{original} subdifferentials~\cite{ShapiroXu07}.


Therefore, a common and core assumption of this approach to attain uniform convergence of 
subdifferential sets is the 
continuous differentiability of the population objective; 
while it facilitates many applications, 
it is sometimes restrictive and does not capture the full picture of subdifferential convergence~\cite[Remark 2]{ShapiroXu07}. 
For achieving uniform convergence with a rate guarantee, stronger assumptions, such as H\"{o}lder continuity of 
population gradients, or $H$-calmness conditions---a H\"{o}lder type assumption---on subdifferential sets, are often necessary~\cite{Xu10}. 
Concrete analyses of uniform convergence rates under various smoothness assumptions of population risk have been explored in a varierty of statistical and machine-learning contexts, e.g.,~\cite{BaiJiSu19, MaFa21, LamWaWuZh22}. For instance, in multi-task learning, \cite{LamWaWuZh22} leverages the assumption of smooth and convex population objectives, using convex duality arguments to establish uniform convergence of subdifferentials. Other research aims to relax the smoothness requirement to piecewise continuity of subdifferential sets over 
disjoint regions, but this still necessitates the underlying probability measure being nonatomic 
to ensure that samples almost surely avoid the discontinuity boundaries of the subdifferentials~\cite{RalphXu11}.


In this work, we provide understanding beyond the specific instances of these prior works by presenting a 
generalized principle that, most notably, encompasses settings where the aforementioned smoothness assumptions do not hold.
Specifically, our work targets the domain of stochastic weakly-convex classes, pivotal to numerous data-driven applications~\cite{DavisDr20}—while not relying on smoothness assumptions for the population risk 
often imposed in prior approaches. 
The fundamental principle of our approach is that, for stochastic weakly convex objectives,
achieving uniform convergence—and the associated rates—of subdifferential sets 
using sample average approximation requires only the uniform convergence 
and rates of any chosen pair of the subgradient mappings of the corresponding empirical and population risk functions.
The proof of our principle is built on top of variational analytic tools such as Rademacher's theorem and Clarke's subdifferential characterizations. 
Leveraging this principle, we derive uniform convergence (and also rates) for a specific weakly-convex subclass termed stochastic convex-composite objectives, without requiring the population objectives to be continuously differentiable often required in the existing literature.
Importantly, obtaining uniform convergence (and rates) for discontinuous subgradient mappings in stochastic convex-composite objectives 
still demands sophisticated tools from statistical learning theory, such as chaining, the concept of shattering, and the notion of 
Vapnik–Chervonenkis dimension~\cite{Vapnik13}.

Notably, there is a recent advance in the literature that also targets at establishment of uniform subdifferential convergence 
within the domain of stochastic weakly convex classes in nonsmooth settings~\cite{DavisDrPa20, DavisDr22}. However, there are significant differences between 
our results and their results. 
First, there is a subtle difference in terms of the criteria for convergence. Our notion of uniform convergence on any open domain implies their 
notion of graphical convergence of subdifferential mappings on the same domain, but the reverse 
is not generally true (see Remark of Theorem~\ref{theorem:bound-HD-by-MS}). Therefore, our results are established 
under a topologically weaker notion of convergence.
Second, there is an important difference in the approaches and the established rates. Their approach, 
grounded in the utilization of Moreau envelope smoothing~\cite{Moreau65}, 
and Attouch's epigraphical convergence theorem~\cite{Attouch77}, achieves a convergence rate whose dependence on the sample 
size $m$ is often at $m^{-1/4}$. The reason is that the core principle behind their approach, which aims to derive graphical 
subdifferential distance bounds through the closeness of objective values, often inherently results in suboptimal 
guarantees in terms of the sample size~\cite[Section 5]{DavisDr22}. Our results are notably precise regarding the sample size 
$m^{-1/2}$, although they apply mainly to 
stochastic convex-composite objectives and introduce additional logarithmic factors. This indicates that further research is 
essential to fully delineate the advantages and limitations of both approaches.  

Finally, there are many interesting alternative ideas in the literature based on directly smoothing the nonsmooth
objective to attain uniform convergence of gradients of the smoothed objectives~\cite{XuZh09}, also 
used extensively in the statistics literature, e.g., in the context of quantile regressions~\cite{Horowitz98}. These results, 
however, do not directly characterize the convergence behavior of the original subdifferential sets. 

\subsection{Roadmap and Paper Organization} 
Section~\ref{sec:notation} provides the basic notation and definitions. 
Section~\ref{sec:general-theory} describes the general principle for analyzing the uniform convergence behavior for the 
subdifferential sets for weakly convex functions. Section~\ref{sec:explicit-uniform-convergence-rates} characterizes the explicit
uniform convergence rates of the subdifferential sets for stochastic convex-composite minimization problems. Section~\ref{sec:applications}
gives concrete applications illustrating our techniques on the problem of robust phase retrieval. The remainders are proofs and 
discussions.

\newcommand{\dom}{{\rm dom}}
\section{Notation and Basic Definitions} 
\label{sec:notation}
For $x, y \in \R$, we let $x \wedge y = \min\{x, y\}$ and $x \vee y = \max\{x, y\}$. In $\R^d$, we use $\norm{\cdot}$ and 
$\langle \cdot, \cdot \rangle$ to denote the standard Euclidean norm and inner product respectively. 
We let $\B(x_0; r) = \{x: \norm{x-x_0} < r\}$ denote the open $\ell_2$ ball with center at $x_0 \in \R^d$ and radius $r$. 
For matrices in $\R^{d \times d}$, we identify each such matrix with a vector in $\R^{d^2}$ 
and define $\langle A, B \rangle$ as ${\rm tr}(AB^T)$, where $\rm tr$ denotes the trace. For a set $X$, we use 
${\rm cl}, {\rm int}$ to denote the closure and interior respectively for the set $X$. For a closed convex set $X$, 
we use $\iota_X$ to denote the $+\infty$-valued indicator for the set $X$, that is $\iota_X(x) = 0$ if $x \in X$ and 
$+\infty$ otherwise. The normal cone to $X$ at $x$ is $\mathcal{N}_X(x):= \{v \in \R^d: \langle v, y-x\rangle \le 0~\text{for all}~y\in X\}$.

For a function $f \colon \R^d \to \R \cup\{+\infty\}$, its epigraph is the set $\{(x, \alpha) \in \R^d \times \R \mid \alpha \ge f(x)\}$.
It is called closed if its epigraph is closed in $\R^d \times \R$. It is called subdifferentially regular at $x$ if $f(x)$ is 
finite and its epigraph is Clarke regular at $(x, f(x))$ as a subset of $\R^d \times \R$
\cite[Definition 7.25]{RockafellarWe98}. A function $f\colon \R^d \to \R$ is said to be $\mathcal{C}^1$ smooth on $\R^d$ if it has continuous 
gradients $\grad f \colon \R^d \to \R^d$.

\subsection{Weakly Convex Functions and Subdifferentials}

We say $f\colon \R^d \to \R \cup \{+\infty\}$ is $\lambda$ locally 
weakly-convex near $x$
(also known as lower-$\mathcal{C}^2$~\cite{RockafellarWe98} or semiconvex~\cite{BolteDaLeMa10}) if there 
exists $\eps > 0$ such that 
\begin{equation*}
	y \mapsto f(y) + \frac{\lambda}{2} \norm{y}^2, ~~y \in \B(x; \eps)
\end{equation*}
is convex~\cite[Chapter 10.G]{RockafellarWe98}. We say a function $f \colon \R^d \to \R \cup \{+\infty\}$ is 
a locally weakly convex function on $\openset \subseteq \R^d$ if at every $x \in \openset$, there is $\lambda < \infty$ 
such that $f$ is $\lambda$ locally weakly-convex near $x$.
For a function $f\colon \R^d \to \R \cup \{+\infty\}$ and a point $x$ with $f(x)$ finite, we let 
$\partial f(x)$ denote the Fr\'{e}chet subdifferential (or regular~\cite[Chapter 8.B]{RockafellarWe98}) of $f$ at the point $x$, 
\begin{equation*}
	\partial f(x) = \left\{g \in \R^d  \colon f(y) \ge f(x) + \langle g, y-x \rangle + o(\norm{y-x})~~\text{as}~~y \to x\right\}.
\end{equation*}
Here, any $g \in \partial f(x)$ is referred to as a subgradient of $f$ at the point $x$. We say $v$ is a \emph{horizon
subgradient} of $f$ at $x$ with $f(x)$ finite, written as $v \in \partial^\infty f(x)$ if there exist sequences $x_i, v_i \in \partial f(x_i)$ and 
$\tau_i \to 0$ satisfying $(x_i, f(x_i), \tau_i v_i) \to (x, f(x), v)$~\cite[Chapter 8.B]{RockafellarWe98}. 

Notably, when $f$ is smooth, the subdifferential $\partial f(x)$ consists only of the gradient $\{\grad f(x)\}$, while 
for convex function, it coincides with the subdifferential in convex analysis~\cite{Rockafellar70}.  
For a weakly convex function $f$, the subdifferential $\partial f(x)$ is non-empty, compact, and convex for every $x$ 
in the interior of $\dom f$. Notably, these follow from the corresponding results for convex 
functions~\cite[Chapter 8]{RockafellarWe98}.

\subsection{Set-valued Analysis}  
Our definitions follow the references of Rockafellar and Wet~\cite{RockafellarWe98}, and
Aubin and Frankowska~\cite{AubinHe99}. 
For a set $A \subseteq \R^d$, we denote $\norm{A} = \sup_{a\in A} \norm{a}$. 

For a set $A \subseteq\R^d$ 
and $y \in \R^d$, we denote by $\dist(y, A) = \inf_{z \in A} \norm{y-z}$ the 
distance from $y$ to $A$ with respect to the Euclidean norm $\norm{\cdot}$. 
For two sets $A_1, A_2 \subseteq \R^d$, we denote by 
\begin{equation*}
	\D(A_1, A_2) = \sup_{x \in A_1} \dist(x, A_2)
\end{equation*}
the deviation of the set $A_1$ from the set $A_2$, by 
\begin{equation*}
	\H(A_1, A_2) = \max\{\D(A_1, A_2), \D(A_2, A_1)\}
\end{equation*}
the Hausdorff distance between $A_1$ and $A_2$. 
Given a sequence of sets $A_n \subseteq \R^d$, the limit supremum of the sets consists of limit points of 
subsequences $y_{n_k} \in A_{n_k}$, that is, 
\begin{equation*}
	\limsup_n A_n = \{y \colon \exists n_k,  \exists y_{n_k} \in A_{n_k}~\text{s.t.}~y_{n_k} \to y~\text{as}~k \to \infty\}.
\end{equation*}
The limit infimum of the sets consists of limit points of sequences $y_n \in A_n$, that is: 
\begin{equation*}
	\liminf_n A_n = \{y \colon \exists y_n \in A_n~\text{s.t.}~y_n \to y~\text{as}~n \to \infty\}.
\end{equation*}
We let $G \colon X \rightrightarrows \R^d$ denote a set-valued mapping from $X$ to $\R^d$, and
$\dom(G):= \{x\colon G(x) \neq \emptyset\}$. A function $g \colon X \to \R^d$ is said to be a selection of 
$G\colon X \rightrightarrows \R^d$ if $g(x) \in G(x)$ for every $x \in X$. 
We say $G$ is outersemicontinuous if for any sequence $x_n \to x \in \dom(G)$,
we have $\limsup_n G(x_n) \subseteq G(x)$. We say $G$ is innersemicontinuous if
for any $x_n \to x \in \dom(G)$, we have $\liminf_n G(x_n) \supseteq G(x)$. We say $G \colon X \rightrightarrows \R^d$
is continuous if it is both outersemicontinuous and innersemicontinuous.  

For a weakly convex function $f \colon \R^d \to \R \cup\{+\infty\}$, the subgradient 
mapping $\partial f \colon  {\rm int}~\dom(f) \rightrightarrows \R^d$ is outersemicontinuous~\cite[Chapter 8]{RockafellarWe98}. 

\subsection{Probability Space and Random Set-Valued Map}
\label{sec:probability-space-and-random-set-valued-map}
Let $(\Xi, \mathscr{G}, \P)$ denote a probability space. 
We frequently consider random set-valued mapping of the form $\mathcal{A}  \colon X \times \Xi \rightrightarrows \R^d$. 
Let $\mathscr{B}$ denote the space of nonempty, compact subsets of $\R^d$. 
By~\cite[Theorem 14.4]{RockafellarWe98}, we say the mapping $\xi \mapsto \mathcal{A}(x, \xi)$ is measurable if and only if 
for every $\mathcal{S} \in \mathscr{B}$, $\mathcal{A}(x, \cdot)^{-1}\mathcal{S} = \{\xi \colon \mathcal{A}(x, \xi) \cap \mathcal{S} \neq \emptyset\}$ is 
a measurable set under $(\Xi, \mathscr{G}, \P)$.  

By a selection of the random set $\mathcal{A}(x, \xi)$, we refer to a random vector $a(x, \xi) \in \mathcal{A}(x, \xi)$,
meaning that $\xi \mapsto a(x, \xi)$ is measurable. Note that such selection exists if 
$\xi \mapsto \mathcal{A}(x, \xi)$ is closed-valued (meaning that $\mathcal{A}(x, \xi)$ is a closed set for every $\xi$) 
and measurable~\cite{Aumann65}, see 
also~\cite[Theorem 8.1.3]{AubinHe99}.
For a map $\mathcal{A} \colon X \times \Xi \rightrightarrows \R^d$ and a probability measure $\P$, we define its expectation, following
Aumann~\cite{Aumann65}. At every $x \in X$: 
\begin{equation*}
	\int_\Xi \mathcal{A}(x, \xi) P(d\xi) := \left\{\int_\Xi a(x, \xi) P(d\xi) \mid a(x, \xi) \in \mathcal{A}(x, \xi)~
		\text{for}~\xi \in \Xi,~a(x, \cdot)~\text{integrable}\right\}. 
\end{equation*}

We say $f \colon \R^d \times \Xi \to \R \cup \{+\infty\}$ is a random function when $\xi \mapsto f(x, \xi)$ is 
measurable at every $x \in X$~\cite[Chapter 7]{ShapiroDeRu21}. Suppose a random function $f$ also 
satisfies $x\mapsto f(x, \xi)$ is locally weakly convex, and real-valued near a point $x \in \R^d$, then the
subdifferential $\partial f(x, \xi)$ satisfies $\xi \mapsto \partial f(x, \xi)$ is measurable at that
$x \in \R^d$, following~\cite[Section 7.2.6]{ShapiroDeRu21} and~\cite[Proposition 2.1]{XuZh09}.

\subsection{Measurability Issues} 
\label{sec:probability-measure-and-measurability}
This subsection can be mostly skipped on a first read, but it is essential for addressing measurability 
issues that arise when considering the supremum of random functions over an uncountable index 
set~\cite{Billingsley86}. This is important because we are interested in the supremum of the Hausdorff 
distance between stochastic subdifferential mappings over a given domain.

To address these measurability issues, we use the concept of outer measure $\P^*$ and  inner measure $\P_*$ 
frequently adopted in empirical process theory~\cite[Section 1.2-5]{VanDerVaartWe96}.
Given a probability space $(\Xi, \mathscr{G}, \P)$, every subset $B \subseteq \Xi$ is assigned with an 
outer measure: 
\begin{equation*}
	\P^*(B) = \inf \{\P(A) \colon B\subseteq A, A\in \mathscr{G}\}.
\end{equation*} 
Notably, $\P^*$ is subadditive: $\P^*(B_1 \cup B_2) \le \P^*(B_1) + \P^*(B_2)$, following the union bound. 
Correspondingly, the inner measure is $\P_*(B) = \sup\{\P(A) \colon B\supseteq A, A \in \mathscr{G}\}$. Clearly, $\P_*(B) = 1-\P^*(\Xi \backslash B)$.
Finally, when $B \in \mathscr{G}$ is measurable, then the inner and outer measure agree: $\P^*(B) = \P_*(B) = \P(B)$.

\section{General Theory}
\label{sec:general-theory}
\subsection{Uniform Subdifferential Bounds via Selections} 
In this section, we introduce Theorem~\ref{theorem:bound-HD-by-MS}, a general technique
for obtaining uniform bounds on the subdifferentials between two real-valued, locally weakly convex functions. 
The theorem states that the supremum of the Hausdorff distance between their subdifferentials over 
any open set is upper bounded by the supremum of the norm differences between \emph{any} selected 
subgradient mappings. 

Theorem~\ref{theorem:bound-HD-by-MS} might initially seem surprising because at any single point 
$x$, the Hausdorff distance between the subdifferentials at $x$ can exceed the norm difference 
between the selected subgradients. To further appreciate this result, we will provide counterexamples 
in the remark 
to highlight why the assumption of $\openset$ being an open set is crucial.

We note that in the (weakly) convex settings,  graphical distance
between subdifferentials under the Hausdorff metric 
is typically established through controlling the
epi-distance of functions due to Attouch's celebrated theorem~\cite{Attouch77, AttouchBe93, DavisDr22}.  See, e.g.,
\cite[Theorem 12.35]{RockafellarWe98}. We shall make comparisons to clarify two distance metrics of subdifferential mappings 
in the second remark: the supremum of the Hausdorff distance between subdifferential mappings over $x$ in an open set $\openset$,
and 
the graphical distance between subdifferential mappings over the same open set $\openset$.

\vspace{.5cm}

\begin{theorem}
\label{theorem:bound-HD-by-MS}
Let $f_1$ and $f_2$ be locally weakly convex functions from $\openset$ to $\R$, 
where $\openset$ is an open set in $\R^d$. Let
$g_1$ and $g_2$ be selections of the subdifferentials $\partial f_1$ and $\partial f_2$ on $\openset$, 
respectively, i.e., obeying $g_1(x) \in \partial f_1(x)$ and $g_2(x) \in \partial f_2(x)$ for all $x \in \openset$. 
Then the following inequality holds: 
\begin{equation}
	\sup_{x \in \openset} \H(\partial f_1(x), \partial f_2(x)) \le \sup_{x \in \openset} 
		\norm{g_1(x) - g_2(x)}.
\end{equation} 
\end{theorem}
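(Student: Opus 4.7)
The plan is to exploit Rademacher's theorem in combination with the Clarke gradient characterization of the subdifferential for weakly convex functions. I would first observe that any locally weakly convex function is locally Lipschitz on the interior of its domain (since $f + \tfrac{\lambda}{2}\norm{\cdot}^2$ is convex on a neighborhood of each point, and convex functions are locally Lipschitz on the interior of their domain), so both $f_1$ and $f_2$ are locally Lipschitz on the open set $\openset$. By Rademacher's theorem applied to each function and the fact that the union of two Lebesgue-null sets is null, the set $D \subseteq \openset$ of points at which both $f_1$ and $f_2$ are differentiable has full Lebesgue measure in $\openset$, and is in particular dense in $\openset$.

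The second ingredient is that, for a weakly convex function $f$, differentiability at $x$ forces $\partial f(x) = \{\grad f(x)\}$, so any selection $g_i$ of $\partial f_i$ must satisfy $g_i(x) = \grad f_i(x)$ for every $x \in D$, regardless of how $g_i$ is chosen off of $D$. Setting $C \defeq \sup_{x \in \openset}\norm{g_1(x) - g_2(x)}$, this yields the pointwise gradient bound $\norm{\grad f_1(x) - \grad f_2(x)} \le C$ uniformly on the dense set $D$.

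The crux is then to upgrade this bound on $D$ to a set-valued bound at every $x_0 \in \openset$. For this I would invoke the standard characterization (see \cite{RockafellarWe98}) that for locally Lipschitz $f$, the Clarke subdifferential equals the convex hull of all limits $\lim_n \grad f(x_n)$ along sequences $x_n \to x_0$ at which $f$ is differentiable, together with the fact that for subdifferentially regular (in particular weakly convex) functions the Fr\'echet subdifferential coincides with the Clarke one. Given $x_0 \in \openset$ and any $v \in \partial f_1(x_0)$, I would write $v = \sum_k \lambda_k v_k$ with $v_k = \lim_n \grad f_1(x_n^{(k)})$ along sequences $x_n^{(k)} \to x_0$ taken from $D$; density of $D$ in $\openset$ and openness of $\openset$ make this restriction possible. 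Local Lipschitzness of $f_2$ near $x_0$ bounds $\grad f_2$ along each such sequence, so after extracting a subsequence I obtain $\grad f_2(x_n^{(k)}) \to w_k$, and the same characterization places $w_k \in \partial f_2(x_0)$. Passing the pointwise bound on $D$ to the limit yields $\norm{v_k - w_k} \le C$, and convexity of $\partial f_2(x_0)$ then makes $w \defeq \sum_k \lambda_k w_k$ an element of $\partial f_2(x_0)$ with $\norm{v - w} \le C$. This shows $\D(\partial f_1(x_0), \partial f_2(x_0)) \le C$; swapping the roles of $f_1$ and $f_2$ controls the reverse deviation and yields the claimed Hausdorff bound.

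The main obstacle is the third step, and in particular the essential use of openness of $\openset$. At a non-open boundary point one could be forced to access $D$ only from one side, so that certain gradient limits contributing to $\partial f_i(x_0)$ would not be reachable through $\openset$; this is the mechanism behind the counterexamples promised in the remark, and it is what prevents a naive pointwise argument from working. The argument also relies crucially on the equivalence between the Fr\'echet and Clarke subdifferentials for weakly convex functions, which is what allows gradient information on the dense set $D$ to reconstruct the entire subdifferential set at an arbitrary point of $\openset$.
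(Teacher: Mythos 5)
Your proposal is correct and follows essentially the same route as the paper's proof: Rademacher's theorem to get a full-measure common differentiability set $D$, Clarke's gradient-limit characterization of the subdifferential (valid as the Fréchet subdifferential here since weak convexity gives subdifferential regularity), the observation that every selection equals the gradient on $D$, and then a transfer to the convex hull. The only cosmetic difference is that you unpack the convex-hull step explicitly via Carathéodory and convexity of $\partial f_2(x_0)$, whereas the paper invokes the fact $\H(\conv A_1,\conv A_2)\le\H(A_1,A_2)$ as a primitive and then takes limits of gradient sequences moved into $D_1\cap D_2$; these are the same argument arranged differently.
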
 

\newcommand{\conv}{{\rm conv}}
\begin{proof}
By Rademacher's theorem (e.g.,\cite[Theorem 9.6.0]{RockafellarWe98}), both functions $f_i$ are almost everywhere differentiable in $\openset$. Clarke's subdifferential characterization (e.g., \cite[Theorem 9.6.1]{RockafellarWe98}) further states that the subdifferential set $\partial f_i(x)$ can be expressed as:
\begin{equation}
\label{eqn:variational-characterization-of-the-subdifferential-set} 
\begin{split} 
	\partial f_i(x) &= \conv\{\mathcal{G}_i(x)\}
\end{split} 
\end{equation} 
where 
\begin{equation*}
	\mathcal{G}_i(x) = \left\{\lim_{k \to \infty} \grad f_i(x_k) \colon x_k \in D_i, x_k \to x, \lim_{k \to \infty} 
		\grad f_i(x_k)~\text{exists}\right\}. 
\end{equation*} 
Here, $D_i$ denotes the set of differentiable points of $f_i$, which is dense in the open set $\openset$
(meaning that $\openset \backslash D_i$ has Lebesgue measure $0$), and $\conv$ 
represents the convex hull of a set. By definition, the sets $\mathcal{G}_i(x)$ are closed at every $x$, 
and the set-valued 
mappings $x \mapsto \mathcal{G}_i(x)$ are locally bounded as the functions $f_i$ are locally Lipschitz. 

To upper bound the Hausdorff distance between the 
subdifferential sets $\partial f_1(x)$ and $\partial f_2(x)$,  we use a basic property of the Hausdorff distance:
\begin{equation*}
	\H(\conv\{A_1\}, \conv\{A_2\}) \le \H(A_1, A_2)~~~\text{for any sets $A_1$ and $A_2$}.
\end{equation*}
Applying this property to $\partial f_1(x)$ and $\partial f_2(x)$, we obtain for every $x \in \openset$: 
\begin{equation}
	\H(\partial f_1(x), \partial f_2(x)) = \H(\conv\{\mathcal{G}_1(x)\}, \conv\{\mathcal{G}_2(x)\})
		\le \H(\mathcal{G}_1(x), \mathcal{G}_2(x)).
\end{equation} 
Therefore, the problem reduces to bounding the Hausdorff distance between $\mathcal{G}_1(x)$ and $\mathcal{G}_2(x)$. By definition, the Hausdorff distance between these sets is given by:
\begin{equation*}
	\H(\mathcal{G}_1(x), \mathcal{G}_2(x))
		= \max\left\{\sup_{v_1 \in \mathcal{G}_1(x)} \inf_{v_2 \in \mathcal{G}_2(x)}\norm{v_1 - v_2},
			\sup_{v_2 \in \mathcal{G}_2(x)} \inf_{v_1 \in \mathcal{G}_1(x)}\norm{v_1 - v_2}\right\}.
\end{equation*} 
Our specific goal is to bound the first term, showing that for every $x \in \openset$: 
\begin{equation}
\label{eqn:specific-goal}
	\sup_{v_1 \in \mathcal{G}_1(x)} \inf_{v_2 \in \mathcal{G}_2(x)}	\norm{v_1 - v_2}
		\le \sup_{x \in \openset} \norm{g_1(x) - g_2(x)},
\end{equation} 
since a completely symmetric argument would show the same statement holds to the second term.

Fix $x \in \openset$. To establish~\eqref{eqn:specific-goal}, let $v_1^* \in \mathcal{G}_1(x)$ be the subgradient 
achieving the supremum, which exists due to the compactness of the set $\mathcal{G}_1(x)$. By definition, there 
exists a sequence $\{x_k\}$ such that $x_k \to x$, $x_k \in D_1$ (the set of differentiable points of $f_1$) and 
$\grad f_1(x_k) \to v_1^*$. 

We now argue that we can, without loss of generality, assume that for every $k$, $x_k \in D_2$ (the set of differentiable points 
of $f_2$) as well. If some $x_k \not\in D_2$, applying Clarke's subdifferential characterization to $f_1$ at $x_k$~\eqref{eqn:variational-characterization-of-the-subdifferential-set}, we can replace such $x_k$ with 
some $x_k'$ belonging to the subset $D_1 \cap D_2$ dense in $\openset$ (note $\openset \backslash 
(D_1 \cap D_2)$ has Lebesgue measure $0$)
such that $\norm{x_k' - x_k} \to 0$ and $\norm{\grad f_1(x_k') - \grad f_1(x_k)} \to 0$ as $k \to \infty$. As a result, 
$x_k'$ preserves all the properties of $x_k$, including $x_k' \to x$, $x_k' \in D_1$ and $\grad f_1(x_k') \to v_1^*$, 
while guaranteeing that $x_k' \in D_2$ for all $k$. 

Let $v_2^*$ be any accumulation point of $\grad f_2(x_k)$ (which exists since $\mathcal{G}_2$ are locally bounded
near $x$). By taking a subsequence if necessary, we can assume that $\grad f_2(x_k) \to v_2^*$.
Then, 
\begin{equation}
\label{eqn:specific-goal-two}
	\sup_{v_1 \in \mathcal{G}_1(x)} \inf_{v_2 \in \mathcal{G}_2(x)}	\norm{v_1 - v_2}
		\le \norm{v_1^* - v_2^*} = \lim_{k \to \infty} \norm{\grad f_1(x_k) - \grad f_2(x_k)}.
\end{equation} 
At differentiable points $x_k \in D_1\cap D_2$, the subdifferential is a singleton: $\partial f_i(x_k) = \{\grad f_i(x_k)\}$, so the subgradient selection must obey $g_i(x_k) = \grad f_i (x_k)$. Therefore, 
\begin{equation*}
	\sup_{v_1 \in \mathcal{G}_1(x)} \inf_{v_2 \in \mathcal{G}_2(x)}	\norm{v_1 - v_2}  \le 
		\lim_{k \to \infty} \norm{g_1(x_k) - g_2(x_k)} \le \sup_{x \in \openset} \norm{g_1(x) - g_2(x)}.
\end{equation*} 
This establishes \eqref{eqn:specific-goal}. Since the argument is symmetric, we conclude:
\begin{equation}
	\H(\partial f_1(x), \partial f_2(x)) 
		\le \H(\mathcal{G}_1(x), \mathcal{G}_2(x)) \le \sup_{x \in \openset} \norm{g_1(x) - g_2(x)}.
\end{equation} 
Finally, since the above inequality holds for all $x \in \openset$, this completes the proof of
Theorem~\ref{theorem:bound-HD-by-MS}.

\end{proof} 
\renewcommand{\theremark}{\Alph{remark}}

\begin{remark}[Necessity of $\openset$ being an open set]

\emph{
Theorem~\ref{theorem:bound-HD-by-MS} would not hold if $\openset$ were not an open set. Consider 
the convex functions $f_1(x) = |x|$ and $f_2(x) = 2|x|$. Notably, the subdifferential sets are
\begin{equation*}
	\partial f_1(x) = 
		\begin{cases}
			1 & x \ge 0\\ 
			[-1, 1] & x = 0 \\
			-1 & x \le 0
		\end{cases},
	~~~~~~
	\partial f_2(x) = 
	\begin{cases}
			2 & x \ge 0\\ 
			[-2, 2] & x = 0 \\
			-2 & x \le 0
	\end{cases}.
\end{equation*}
Let the subgradient selections $g_1$ and $g_2$ of $\partial f_1$ and $\partial f_2$ respectively be as follows: 
\begin{equation*}
	g_1(x) = 
		\begin{cases}
			1 & x > 0\\ 
			0 & x = 0\\
			-1 & x < 0
		\end{cases},
	~~~~~~
	g_2(x) = 
	\begin{cases}
			2 & x > 0\\ 
			0 & x = 0\\
			-2 & x < 0
	\end{cases}.
\end{equation*}
Consider the case where $\openset = \{0\}$, which is a singleton, and thus a closed set. Then 
\begin{equation*}
	\sup_{x \in \openset} \H(\partial f_1(x), \partial f_2(x)) = \H(\partial f_1(0), \partial f_2(0)) = 1 > 0 = |g_1(0) - g_2(0)| = \sup_{x \in \openset} \norm{g_1(x) - g_2(x)}.
\end{equation*}
This illustrates the necessity for $\openset$ to be an open set for Theorem~\ref{theorem:bound-HD-by-MS} to hold. 
\emph}
\end{remark}

\renewcommand{\d}{{\rm d}}
\newcommand{\gph}{{\rm gph}}
\newcommand{\half}{\frac{1}{2}}

\vspace{.5cm}

\begin{remark}[Comparison of Metrics on Subdifferential Mappings]
\emph{
Let $\openset \subseteq \R^d$ be open. 
For a pair of locally weakly convex functions $f_i \colon \openset \to \R$ for $i= 1, 2$, we define: 
\begin{equation}
\begin{split} 
	\d_1(\partial f_1, \partial f_2) &:= \sup_{x \in \openset} \H(\partial f_1(x), \partial f_2(x)),
	~~~\d_2(\partial f_1, \partial f_2) := \H(\gph_\openset \partial f_1, \gph_\openset \partial f_2).
\end{split} 
\end{equation}
In defining $\d_2$, for a locally weakly convex $f\colon \openset \to \R$, we recall the graph of its subdifferential: 
\begin{equation*}
	\gph_\openset \partial f = \{(x, y) \in \openset \times \R^d: y \in \partial f(x)\},
\end{equation*}
where the Cartesian product $\openset \times \R^d \subseteq \R^d \times \R^d \cong \R^{2d}$ is equipped with Euclidean distance. 
By definition, $\d_1, \d_2$ are metrics on $\mathbb{S}_\openset = 
\{g\colon \openset \rightrightarrows \R^d \colon g = \partial f~\text{for some locally weakly convex $f \colon \openset \to \R$}\}$.
The metric $\d_2$ is known as the graphical distance, aligning well with applications of 
Attouch's epigraphical convergence theorem~\cite{Attouch77}, see also~\cite[Theorem 5.1]{DavisDr22}.
}

\emph{
We document relations between $\d_1$ and $\d_2$. For every open set $\openset$, and every pair of 
locally weakly convex $f_i \colon \openset \to \R$ for $i= 1, 2$, the following bound holds from the definition:
\begin{equation}
\label{eqn:stronger-metric}
	\d_2(\partial f_1, \partial f_2) \le \d_1(\partial f_1, \partial f_2).
\end{equation}
In other words, the metric $\d_1$ is topologically stronger than the metric $\d_2$ on the space $\mathbb{S}_\openset$. On the 
other hand, when $\openset = \R$, we can construct locally weakly convex functions $f_{1, n}$ and $f_{2, n}$ such that 
\begin{equation}
\label{eqn:non-equivalence}
	\lim_{n\to \infty} \d_2(\partial f_{1, n}, \partial f_{2, n}) = 0~~\text{while}~~\lim_{n\to \infty} \d_1(\partial f_{1, n}, \partial f_{2, n}) = 1,
\end{equation}
meaning the metrics $\d_1$ and $\d_2$ are not equivalent in general. To construct such a sequence, we can take 
convex functions $f_{1, n}(x) = |x|$, and $f_{2, n}(x) = \half \left|x-\frac{1}{n}\right| + \half \left|x+\frac{1}{n}\right|$, which yields: 
\begin{equation*}
	\partial f_{1, n}(x) = 
		\begin{cases}
			1 & x \ge 0\\ 
			[-1, 1] & x = 0 \\
			-1 & x \le 0
		\end{cases},
	~~~\text{and}~~~
	\partial f_{2, n}(x) = 
		\begin{cases}
			1 & x > 1/n\\ 
			[0, 1] & x  = 1/n \\
			0 & x \in (-1/n, 1/n) \\
			[-1, 0] & x = -1/n \\
			-1 & x < -1/n
		\end{cases}.
\end{equation*}
A direct verification gives $\d_1(\partial f_{1, n}, \partial f_{2, n}) = 1$ and $\d_2(\partial f_{1, n}, \partial f_{2, n})  = 1/n$ for every $n \in \N$. 
}

\emph{To conclude, the metric $\d_1$ is topologically stronger than the metric $\d_2$ on $\mathbb{S}_\openset$ for every open set $\openset$.
An upper bound on $\d_1(\partial f_1, \partial f_2)$ induces an upper bound on $\d_2 (\partial f_1, \partial f_2)$. 
Nevertheless, the two metrics are not equivalent in general.}
\end{remark}

\newcommand{\oo}{\iota}
\newcommand{\X}{\mathbf{X}}
\newcommand{\interior}{{\rm int}}

\subsection{Implications to Stochastic Weakly Convex Minimization Problems} 
\label{sec:implications-to-stochastic-weakly-convex-minimizations}
Theorem~\ref{theorem:bound-HD-by-MS} provides key insights into the uniform convergence 
of subdifferentials in stochastic minimization problems.
Theorem~\ref{theorem:main-stochastic-minimization-result}, which builds on top of 
Theorem~\ref{theorem:bound-HD-by-MS}, shows that for stochastic minimizing objectives with 
certain weak convexity requirements, the uniform convergence of subdifferential sets can be 
effectively understood by analyzing the uniform convergence of any pair of selections from the 
subdifferentials of both population and empirical objectives.


To formalize our results, we begin by introducing our setup. Let $f(\cdot, \cdot) \colon \R^d \times \Xi \to \R$ denote 
a real-valued random function. 
We are sampling $\xi \sim \P$ where $\P$ is supported on $\Xi$, assumed to be a Euclidean space. 

Under this setup, the \emph{population risk} is given by: 
\begin{equation*}
	\min_{x\in \R^d} \phi(x) = f(x) + R(x)+ \oo_X(x) ~~~\text{where}~~~f(x) = \E_{\xi \sim \P}[f(x, \xi)] = \int_{\Xi} f(x, \xi) P(d\xi),
\end{equation*}
whereas its associated \emph{empirical risk} is given by: 
\begin{equation*}
	\min_{x \in \R^d} \phi_S(x) =  f_S(x) + R(x)+ \oo_X(x)
	~~~\text{where}~~~f_S(x) = \E_{\xi \sim \P_m}[f(x, \xi)] = \frac{1}{m}\sum_{i=1}^m f(x, \xi_i).
\end{equation*}
In the above, $\xi_1, \xi_2, \ldots, \xi_m$ are i.i.d. samples drawn from the distribution $\P$, where 
$\P_m$ denotes the empirical distribution of $\xi_1, \xi_2, \ldots, \xi_m$. Formally, we start with 
a single random variable $\xi$ defined on a probability space $(\Xi, \mathcal{G}, \P)$. Utilizing this space, 
we can construct the countable product space $(\Xi, \mathcal{G}, \P)^\N$ where the sequence of i.i.d. random variables 
$(\xi_1, \xi_2, \ldots, \xi_m, \ldots)$ can be properly defined on this product probability space~\cite[Section 36]{Billingsley86},
with each $\xi_i$ acting as a coordinate projection. In this product space, 
we then extend the sigma-algebra and the probability measure to obtain a complete measure, following 
standard procedures in measure theory~\cite{Billingsley86}, to avoid some cumbersome measurability issues~\cite[Section 7]{ShapiroDeRu21}. 
The function $R \colon \R^d\to \R$
is a real-valued, closed and convex function. The constraint set $X$ is a 
nonempty, closed, and convex set. 

We are interested in establishing an upper bound for the rate of 
convergence of subdifferentials of the empirical objective $\phi_S$ to the population objective $\phi$ 
over some subset of $X$, frequently modeled as $X \cap \openset$ where $\openset$
is an open set in $\R^d$.  This openness requirement for $\openset$ originates from 
Theorem~\ref{theorem:bound-HD-by-MS} and, despite this condition, 
our result is relevant to a broad range of interesting applications. 

To ease our discussions, we assume: 
\begin{assumption}
\label{assumption:avoid-boundary}
$f(x) = \int_{\Xi} f(x, \xi) P(d\xi) < \infty$ for all $x \in \openset$ where $\openset$ is an open set. 
\end{assumption}
Assumption~\ref{assumption:avoid-boundary} implies, with probability one, $f$ and $f_S$ are 
real-valued functions on $\openset$. In many practical scenarios, the objective $f$ has its domain $\R^d$, 
for which Assumption~\ref{assumption:avoid-boundary} naturally holds.




Our results also require the following locally weak convexity assumption on $f(\cdot, \xi)$. 
\begin{assumption}
\label{assumption:weak-convexity}
For all $x$ in $\openset$, there exists $\eps(x) > 0$
and $\lambda(x, \xi) \ge 0$ such that
\begin{equation*}
	y \mapsto f(y, \xi) + \frac{\lambda(x, \xi)}{2} \norm{y}^2
\end{equation*}  
is convex on the open ball $\B(x; \eps(x))$ and $\E_{\xi \sim \P}[\lambda(x, \xi)] < \infty$. 
\end{assumption} 

\vspace{.2cm}
\begin{lemma}
\label{lemma:basic-weak-convexity-probability-one-lemma}
Let Assumptions~\ref{assumption:avoid-boundary} and~\ref{assumption:weak-convexity} hold. 
Then with probability one, the function $f_S(x)$ is locally weakly convex on the open set $\openset$.
Also, the function $f(x) = \E[f_S(x)]$ is locally weakly convex on $\openset$. 
\end{lemma}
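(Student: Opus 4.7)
The plan is to derive both claims directly from Assumption~\ref{assumption:weak-convexity}, whose content is precisely that a local quadratic perturbation convexifies $f(\cdot, \xi)$, together with the elementary fact that convexity is preserved under finite averages and under integration against a probability measure.

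First I would handle the empirical claim. Fix $x \in \openset$ and let $\eps(x) > 0$ and $\lambda(x, \xi) \ge 0$ be the witness radius and modulus supplied by Assumption~\ref{assumption:weak-convexity}. Since $\E[\lambda(x, \xi)] < \infty$, the random variable $\lambda(x, \xi)$ is almost surely finite, so the empirical modulus
\[
	\Lambda_S(x) := \frac{1}{m}\sum_{i=1}^m \lambda(x, \xi_i)
\]
is a.s.\ finite. On the event $\{\Lambda_S(x) < \infty\}$, the function
\[
	y \mapsto f_S(y) + \frac{\Lambda_S(x)}{2}\norm{y}^2 = \frac{1}{m}\sum_{i=1}^m \left(f(y, \xi_i) + \frac{\lambda(x, \xi_i)}{2}\norm{y}^2\right)
\]
is, on $\B(x; \eps(x))$, an average of convex functions (each summand is convex there by Assumption~\ref{assumption:weak-convexity}) and hence convex. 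This witnesses local weak convexity of $f_S$ at $x$.

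Next I would dispatch the population claim. Shrinking $\eps(x)$ if necessary so that $\B(x; \eps(x)) \subseteq \openset$ (using openness of $\openset$), Assumption~\ref{assumption:avoid-boundary} guarantees that $f(y) = \E[f(y, \xi)]$ is finite on the whole ball. With $\bar\lambda(x) := \E[\lambda(x, \xi)] < \infty$, I would verify convexity of $y \mapsto f(y) + \frac{\bar\lambda(x)}{2}\norm{y}^2$ directly: for $y_1, y_2 \in \B(x; \eps(x))$ and $\alpha \in [0,1]$, Assumption~\ref{assumption:weak-convexity} gives the convexity inequality pointwise in $\xi$; integrating against $\P$ (justified since every term is $\P$-integrable by Assumption~\ref{assumption:avoid-boundary} and the integrability of $\lambda(x, \cdot)$) yields the desired inequality at the expectation level.

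The only genuine subtlety is the ``with probability one, for all $x$'' phrasing in the first claim: the naive argument gives an exceptional null set that a priori depends on $x \in \openset$, and an uncountable union could be problematic. I would resolve this by noting that Assumption~\ref{assumption:weak-convexity} provides a nonnegative measurable function $\lambda(x, \cdot)$ with finite expectation, hence finite $\P$-a.s., which we may redefine on a $\P$-null set so that $\lambda(x, \xi) < \infty$ for every $\xi$ without disturbing either the convexity witness or the expectation. Under this convention, $\Lambda_S(x) < \infty$ deterministically, so the local weak convexity of $f_S$ holds simultaneously at every $x \in \openset$ on a single full-probability event (indeed, identically in $\omega$). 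Everything else in the argument is just the closure of convexity under averaging and integration, so I do not expect any deeper obstacle.
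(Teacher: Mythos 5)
Your treatment of the population claim is fine, but your argument for the empirical claim has a genuine gap at the redefinition step, and this is exactly where the paper does something different.

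The issue you correctly identify is that, for each fixed $x$, the exceptional event $\{\exists i:\ \lambda(x,\xi_i)=+\infty\}$ is $\P$-null, but these null events vary with $x$ over the uncountable set $\openset$. Your proposed repair---replacing $\lambda(x,\cdot)$ by a finite-valued version on its null set of infinities ``without disturbing the convexity witness''---is not legitimate. Assumption~\ref{assumption:weak-convexity} only guarantees convexity of $y\mapsto f(y,\xi)+\tfrac{\lambda(x,\xi)}{2}\norm{y}^2$ for the \emph{given} $\lambda(x,\xi)$; when $\lambda(x,\xi_0)=+\infty$ this statement is vacuous (the function is $+\infty$ a.e.\ and says nothing about $f(\cdot,\xi_0)$), and nothing in the assumption ensures that $f(\cdot,\xi_0)$ admits \emph{any} finite convexification modulus on $\B(x;\eps(x))$. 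Setting $\tilde\lambda(x,\xi_0)$ to $0$ (or any finite number) therefore produces a function $\tilde\lambda$ for which the clause ``$y\mapsto f(y,\xi)+\tfrac{\tilde\lambda(x,\xi)}{2}\norm{y}^2$ is convex'' may simply be false at $\xi_0$. The a.s.\ finiteness of $\Lambda_S(x)$ you then derive comes at the price of no longer being able to conclude that each summand $f(\cdot,\xi_i)+\tfrac{\tilde\lambda(x,\xi_i)}{2}\norm{\cdot}^2$ is convex, so the averaging step collapses precisely on the problematic null set. In short: the redefinition does not remove the dependence of the bad event on $x$; it just hides it in an unjustified claim about the modified modulus.

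The paper closes this gap by a topological, not measure-theoretic, device: since $\R^d$ is second countable, Lindel\"of's theorem extracts from the open cover $\{\B(x;\eps(x))\}_{x\in\openset}$ a \emph{countable} subcover indexed by $\openset^o\subseteq\openset$. For each of these countably many centers, $\Lambda_S(x)=\E_{\xi\sim\P_m}[\lambda(x,\xi)]<\infty$ with probability one, and a union bound over $\openset^o$ gives a single full-probability event on which $\Lambda_S(x)<\infty$ simultaneously for all $x\in\openset^o$. Since every point of $\openset$ lies in one of the covering balls $\B(x;\eps(x))$, $x\in\openset^o$, and $f_S+\tfrac{\Lambda_S(x)}{2}\norm{\cdot}^2$ is convex there (as a finite average of genuinely convex functions), local weak convexity of $f_S$ follows at every point of $\openset$ on that event. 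This is the missing idea: you need to reduce the uncountable family of centers to a countable one before invoking the union bound, not try to eliminate the exceptional null sets by tampering with $\lambda$.
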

\begin{proof}
By Assumption~\ref{assumption:weak-convexity}, for each $x \in \openset$, there exists a radius 
$\eps(x) > 0$ and $\lambda(x, \xi) \ge 0$ such that 
\begin{equation*}
	y \mapsto f(y, \xi) + \frac{\lambda(x, \xi)}{2} \norm{y}^2
\end{equation*}  
is convex on the open ball $\B(x; \eps(x))$. The collection of these balls, $\{\B(x; \eps(x))\}_{x \in \openset}$ forms an open cover 
of $\openset$. By Lindel\"{o}f's covering Theorem, see, e.g.,~\cite[Page 50]{Armstrong13}, and noting that $\R^d$  with its norm topology
is second countable, there is a countable
subcover $\{\B(x; \eps(x))\}_{x \in \openset^o}$ of $\openset$, where $\openset^o$ is a countable subset of $\openset$. 
In particular, $\openset$ obeys the inclusion: 
\begin{equation*}
	\openset \subseteq \bigcup_{x \in \openset^o} \B(x; \eps(x)).
\end{equation*}

For each $x \in \openset^o$, the function
\begin{equation*}
	y \mapsto f_S(y) +  \frac{\E_{\xi \sim \P_m}[\lambda(x, \xi)]}{2} \norm{y}^2 = \frac{1}{m}  \sum_{i=1}^m \left(f(y, \xi_i)+\frac{\lambda(x, \xi_i)}{2} \norm{y}^2\right)
\end{equation*}
is convex on the open neighborhood $\B(x; \eps(x))$. Given that $\E_{\xi \sim \P}[\lambda(x, \xi)] < \infty$ holds, for each
$x \in \openset^o$ it follows that 
$\E_{\xi \sim \P_m}[\lambda(x, \xi)] < \infty$ with probability one. Using a union bound over the countable set $\openset^o$
this implies that with probability one, $\E_{\xi \sim \P_m} [\lambda(x, \xi)] < \infty$ simultaneously holds for every $x \in \openset^o$.
Thus, with probability one $f_S(x)$ is locally weakly convex on the set $\openset$, as $\openset$ is included in the countable union 
of these open balls: $ \openset \subseteq \bigcup_{x \in \openset^o} \B(x; \eps(x))$.
A similar reasoning yields that the function $f(x) = \E[f_S(x)]$ is locally weakly convex within the open set $\openset$. 
\end{proof}


Building on Theorem~\ref{theorem:bound-HD-by-MS} and Lemma~\ref{lemma:basic-weak-convexity-probability-one-lemma} then
Theorem~\ref{theorem:main-stochastic-minimization-result} naturally follows. 


\newcommand{\set}{{A}}

\vspace{.5cm}

\begin{theorem}
\label{theorem:main-stochastic-minimization-result}
Let Assumptions~\ref{assumption:avoid-boundary} and~\ref{assumption:weak-convexity} hold.  
Let $G$ and $G_S$ be selections of the subdifferentials $\partial f$ and $\partial f_S$ 
over $x \in \openset$ respectively, i.e., obeying the inclusions $G(x) \in \partial f(x)$ and 
$G_S(x) \in \partial f_S(x)$ for every $x \in \openset$. 
Then, the following inequality holds with probability one: 
\begin{equation*}
	\sup_{x \in \openset \cap X} \H(\partial \phi(x), \partial \phi_S(x))  \le \sup_{x \in \openset}\norm{G(x) - G_S(x)}.
\end{equation*} 
\end{theorem}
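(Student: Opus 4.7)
The plan is to reduce the Hausdorff bound for the composite objectives $\phi, \phi_S$ to the corresponding bound for the random summands $f, f_S$, and then to invoke Theorem~\ref{theorem:bound-HD-by-MS}. By Lemma~\ref{lemma:basic-weak-convexity-probability-one-lemma}, both $f$ and $f_S$ are locally weakly convex on $\openset$ with probability one; I would work on this probability-one event, so that every subsequent step is deterministic and no measurability complications arise.

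The next step is a subdifferential sum rule. Since $f$ and $f_S$ are real-valued and locally weakly convex on $\openset$, they are locally Lipschitz and subdifferentially regular there, with trivial horizon subdifferentials $\partial^\infty f(x) = \partial^\infty f_S(x) = \{0\}$. The function $R + \oo_X$ is proper, lsc, and convex, hence also subdifferentially regular. Under these conditions the Rockafellar--Wets sum rule for regular subdifferentials applies, yielding
\begin{equation*}
\partial \phi(x) = \partial f(x) + \partial (R + \oo_X)(x), \qquad \partial \phi_S(x) = \partial f_S(x) + \partial (R + \oo_X)(x)
\end{equation*}
for every $x \in \openset \cap X$. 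Because both decompositions share the common summand $\partial(R+\oo_X)(x)$, the translation-invariance inequality $\H(A+C, B+C) \le \H(A, B)$, which holds for any sets $A, B, C \subseteq \R^d$ by taking a matched shift of the ambient point, applied pointwise yields $\H(\partial \phi(x), \partial \phi_S(x)) \le \H(\partial f(x), \partial f_S(x))$ on $\openset \cap X$.

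Taking the supremum over $\openset \cap X$ and enlarging the index set to the open set $\openset$, I would then invoke Theorem~\ref{theorem:bound-HD-by-MS} on the pair $(f, f_S)$ with selections $(G, G_S)$ to obtain
\begin{equation*}
\sup_{x \in \openset \cap X} \H(\partial \phi(x), \partial \phi_S(x)) \le \sup_{x \in \openset} \H(\partial f(x), \partial f_S(x)) \le \sup_{x \in \openset} \norm{G(x) - G_S(x)},
\end{equation*}
which is the desired inequality. The main obstacle I anticipate is ensuring that the Fr\'echet sum rule applies with equality at every $x \in \openset \cap X$, especially at boundary points of $X$ where $\mathcal{N}_X(x)$ is nontrivial; the subdifferential regularity of both summands combined with the local Lipschitzness of $f$ and $f_S$ removes the usual horizon-subgradient obstructions, but this point warrants a careful citation to the appropriate regular-subdifferential calculus rule rather than appealing to the coarser Clarke calculus.
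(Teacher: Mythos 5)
Your proposal is correct and follows essentially the same route as the paper's proof: invoke Lemma~\ref{lemma:basic-weak-convexity-probability-one-lemma} for the probability-one weak convexity of $f$ and $f_S$, use subdifferential regularity and vanishing horizon subgradients to apply the exact sum rule (the paper cites \cite[Corollary 10.9]{RockafellarWe98}), exploit translation-invariance of the Hausdorff metric to cancel the common summand, and finish with Theorem~\ref{theorem:bound-HD-by-MS}. The only cosmetic difference is that you group $R + \oo_X$ as a single convex summand whereas the paper writes $\partial R(x) + \mathcal{N}_X(x)$; these agree since $R$ is real-valued and convex.
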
 

\begin{proof}
Given that $X$ is closed and convex, $\oo_X$ is subdifferentially regular at any $x \in X$~\cite[Exercise 8.14]{RockafellarWe98}. The functions 
$f(x)$ and $f_S(x)$ are real-valued, closed and weakly convex on $\openset$ by Assumptions~\ref{assumption:avoid-boundary}
and~\ref{assumption:weak-convexity}. Thus they are subdifferentially regular on $\openset$~\cite[Corollary 8.11]{RockafellarWe98},
with $\partial^\infty f(x) = \partial^\infty f_S(x) = \{0\}$ for all $x \in \openset$. Similarly, the
real-valued, closed, and convex function $r$, is subdifferentially regular 
at $x \in\openset$ with  $\partial^\infty R(x) = \{0\}$.

These regularity properties of the functions enable the application of basic subdifferential 
calculus~\cite[Corollary 10.9]{RockafellarWe98}.  Accordingly, the subdifferentials at $x\in X$ are given by:
\begin{equation*}
	\partial \phi(x) = \partial f(x) + \partial R(x) + \mathcal{N}_X(x),~~~~\partial \phi_S(x) = \partial f_S(x) + \partial R(x) + \mathcal{N}_X(x).
\end{equation*} 
Here, the addition is the Minkowski sum. A fundamental property of Hausdorff distance is that 
\begin{equation*}
	\H(\set_1 + \set_3, \set_2 + \set_3) \le \H(\set_1, \set_2)
\end{equation*}
holds for any sets  $\set_1, \set_2, \set_3$. Consequently, this implies for every $x \in X$: 
\begin{equation*}
	\H(\partial \phi(x), \partial \phi_S(x)) \le \H(\partial f(x), \partial f_S(x)).
\end{equation*}
Since both functions $f(x)$ and $f_S(x)$ are real-valued and locally weakly convex functions on 
$\openset$ due to Assumptions~\ref{assumption:avoid-boundary} and~\ref{assumption:weak-convexity}, 
the following chain of inequalities then follows: 
\begin{equation*}
\begin{split} 
\sup_{x \in \openset \cap X} \H(\partial\phi(x), \partial\phi_S(x))  
	&\le \sup_{x \in \openset \cap X} \H(\partial f(x), \partial f_S(x))\\ 
	&\le
		\sup_{x \in \openset}  \H(\partial f(x), \partial f_S(x)) \le \sup_{x \in \openset}\norm{G(x) - G_S(x)}.
\end{split} 
\end{equation*} 
The last inequality is 
due to Theorem~\ref{theorem:bound-HD-by-MS}, acknowledging that, with probability one, 
$f$ and $f_S$ are real-valued and weakly-convex on the open set $\openset$ by  
Lemma~\ref{lemma:basic-weak-convexity-probability-one-lemma}. This inequality, which involves the 
supremum over uncountable random variables, represents a measurable event with probability one 
due to the completeness of the probability measure on the product space (see the setup on the probability space 
at the beginning of this subsection).
\end{proof} 

\subsection{A Class of Stochastic Weakly Convex Minimization Problems}
\label{sec:a-class-of-stochastic-weakly-convex-minimizations}
One important example of stochastic weakly convex minimization problems corresponds to the 
setting where
\begin{equation}
\label{eqn:convex-compositions}
	f(x, \xi) = \loss(c(x; \xi)).
\end{equation}
Here, $\loss\colon \R^k \to \R$ is a closed, convex and real-valued function, and $c(\cdot, \xi)\colon \R^d \to \R^k$ is 
$\mathcal{C}^1$ smooth on $\R^d$ for every $\xi$. Under this scenario, the population and empirical objectives become
\begin{equation}
\label{eqn:stochastic-convex-composite-objectives}
\begin{split} 
	\min_{x\in \R^d} \phi(x) = f(x) + R(x)+ \oo_X(x) ~~~&\text{where}~~~f(x) = \E_{\xi \sim \P}[\loss(c(x; \xi))] = \int_{\Xi} \loss(c(x; \xi)) P(d\xi)\\
	\min_{x\in \R^d} \phi_S(x) = f_S(x) + R(x) + \oo_X(x) ~~~&\text{where}~~~f_S(x) = \E_{\xi \sim \P_m}[\loss(c(x; \xi))] = \frac{1}{m} \sum_{i=1}^m \loss(c(x;\xi_i))
\end{split}.
\end{equation}
 In the literature, it is known that under mild conditions, 
the convex composition $f(\cdot, \xi) = \loss(c(\cdot; \xi))$ satisfies weak-convexity requirements in Assumption~\ref{assumption:weak-convexity}, 
resulting in locally weakly convex objectives $f(x)$ and $f_S(x)$ that fit into our diagrams in 
Section~\ref{sec:implications-to-stochastic-weakly-convex-minimizations}, 
see, e.g.,~\cite[Claim 1]{DuchiRu18} and~\cite{DavisDr19}. We will 
revisit these conditions subsequently before we present our main results in Theorem~\ref{theorem:main-stochastic-convex-composition-result}. 

We give examples of stochastic convex-composite objectives that appear in 
statistics, machine learning, imaging, and risk management. More examples can be found 
in~\cite[Section 2.1]{DavisDr19}.


\vspace{.2cm}

\begin{example}[Robust Phase Retrieval]
\label{example:robust-phase-retrieval}
\emph{Phase retrieval is a computational problem 
with applications across various fields, including imaging, X-ray crystallography, and speech processing.
The (real-valued) phase retrieval seeks to detect a point $x$ satisfying $|\langle a_i, x\rangle|^2 = b_i$
where $a_i \in \R^d$, and $b_i \in \R$ for $i=1,2,\ldots, m$. We can choose $\xi = (a, b) \in \R^d \times \R$, 
$\loss(z) = |z|$ and $c(x; \xi) = (a^T x)^2-b$, and $X = \R^d$, in which case the 
form~\eqref{eqn:stochastic-convex-composite-objectives} gives an exact penalty formulation for solving 
the collection of quadratic equations, which
yields strong statistical recovery and robustness guarantees~\cite{EldarMe14, DuchiRu19, DavisDrPa20},
among other nonconvex formulations~\cite{CandesLiSo15, WangGiEl17, SunQuWr18}.
}
\end{example} 

\vspace{.2cm}

\newcommand{\tr}{{\rm tr}}
\begin{example}[Robust Matrix Sensing]
\label{example:robust-matrix-sensing}
\emph{This problem can be viewed as a variant of phase retrieval. Let $A_1, A_2, \ldots, A_{m} \in \R^{D\times D}$
be measurement matrices. Given the measurement $b_i = \langle A_i, M_\sharp\rangle + \eta_i \in \R$, where $M_\sharp \in \R^{D \times D}$
is the true matrix, and $\eta_i \in \R$ is the noise corruption, the goal is to recover a low-rank approximation 
of $M_\sharp$, which is modeled through $XX^T$ where $X \in \R^{D \times r_0}$ where $1 \le r_0 \le D$. 
Recently, there has been a series of efforts showing that the following potential function has strong stability 
guarantees under certain assumptions on $A$ and $\eta$~\cite{LiZhMaVi20, CharisopoulosChDaDiDiDr21, DingJiChQuZh21}:
\begin{equation*}
	\min_{X \in \R^{d \times r}} \frac{1}{m}\sum_{i=1}^m |\langle A_i, XX^T \rangle - b_i |.
\end{equation*}
This falls into the form~\eqref{eqn:stochastic-convex-composite-objectives} by simply setting $x = X \in \R^{D \times r_0}$, 
$\xi = (A, b)$ where $A \in \R^{D\times D}, b\in \R$, $\loss(z) = |z|$ and $c(x; \xi) = c(x; (A,b)) = \langle A, XX^T \rangle - b$.
} 
\end{example} 

\vspace{.2cm}

\begin{example}[Robust Blind Deconvolution]
\emph{
Blind deconvolution aims to recover two vectors $x_1, x_2 \in \R^d$ from the observed measurements  
$b_i = \langle u_i, x_1\rangle \langle v_i, x_2\rangle$ from $i = 1, 2, \ldots, n$ where $u_i, v_i \in \R^d$
are known measurement vectors.  This problem has applications in fields like astronomy and computer vision.
A recent formulation for recovering $x_1$ and $x_2$ minimizes the reconstruction error~\cite{CharisopoulosDaDiDr21, Diaz19}: 
\begin{equation*}
	\min_{x_1 \in \R^d, x_2 \in \R^d}  \frac{1}{m} \sum_{i=1}^m |b_i - \langle u_i, x_1\rangle \langle v_i, x_2\rangle|
\end{equation*}
This falls into the form~\eqref{eqn:stochastic-convex-composite-objectives} by simply setting $x = (x_1, x_2)$, 
$\xi = (u, v, b)$ where $u, v \in \R^{d}, b\in \R$, $\loss(z) = |z|$ and $c(x; \xi) = c((x_1, x_2); (u, v, b)) = b - \langle u, x_1\rangle \langle v, x_2 \rangle$.
}
\end{example}

\vspace{.2cm}
\begin{example}[Conditional Value-at-Risk]
\emph{
Let $\ell(w, \xi)$ represent a decision rule parameterized by $w \in \R^d$ for a data point $\xi$, where $\xi \sim \P$.
Instead of minimizing the expected value $\E_{\xi \sim \P}[\ell(w, \xi)]$, it is often preferable to minimize the 
conditional expectation of the random variable $\ell(w, \cdot)$ over its $\alpha$-tail, where $\alpha \in (0, 1)$ is given.  
This quantity is termed the Conditional Value-at-Risk \nolinebreak (cVaR)~\cite[Section 6]{ShapiroDeRu21}.  
Remarkably,  a seminal work shows that minimizing cVaR can be formulated as~\cite{RockafellarUr00}: 
\begin{equation*}
	\min_{\gamma \in \R, w \in \R^d} \E_{\xi \sim P} [(1-\alpha) \gamma + (\ell(w, \xi) - \gamma)_+].
\end{equation*} 
Now we suppose $\ell(\cdot, \xi)$ is $\mathcal{C}^1$ smooth. 
This falls into the form~\eqref{eqn:stochastic-convex-composite-objectives} by setting $x = (\gamma, w) \in \R \times \R^d$, 
$\loss(z) = (z)_+$, $c(x; \xi) =  c((\gamma, w); \xi) = \ell(w, \xi) - \gamma$, and $R(x) = R((\gamma, w)) = (1-\alpha)\gamma$.
}
\end{example}



We now revisit the conditions on the convex function $\loss$ and smooth function 
$c(x, \xi)$ that guarantees the weak-convexity assumptions for the composition 
$f(x, \xi) = \loss(c(x, \xi))$, ensuring the regularity required for 
subdifferential calculus rules. 
We take the following conditions---local Lipschitzian and integrability conditions on $\loss, c$---from~\cite[Section 2.2]{DuchiRu18}. 

\renewcommand{\theassumption}{\Alph{assumption}'}
\setcounter{assumption}{0}

\vspace{.2cm}
\begin{assumption}
\label{assumption:avoid-boundary-cc}
$f(x) = \int_{\Xi} \loss(c(x; \xi)) P(d\xi)< \infty$ for every $x \in \openset$ where $\openset$ is an open set. 
\end{assumption}

\vspace{.2cm}
\begin{assumption}
\label{assumption:weak-convexity-for-composition}
For every $x \in \openset$, there is $\eps(x) > 0$ such that 
\begin{itemize}
\item $\sup_{y \in \B(x; \eps(x))}\norm{\grad c(y; \xi)\partial \loss(c(y; \xi))} \le L_x(\xi)$ for some random variable $L_x(\xi)$ that is measurable, and integrable with respect to $\xi \sim \P$. 
\item $\norm{\grad c(y; \xi) - \grad c(y'; \xi)} \le \beta_\eps(x, \xi) \norm{y-y'}$ 
for $y, y' \in \B(x; \eps(x))$, where $\E[\beta_\eps(x, \xi)] < \infty$.
\end{itemize}
\end{assumption} 

\vspace{.2cm}

Given Assumptions~\ref{assumption:avoid-boundary-cc} and~\ref{assumption:weak-convexity-for-composition}, the following basic property
on the stochastic convex-composite objectives follows from~\cite[Claim 1+Lemma 3.6]{DuchiRu18}. We use 
$\grad c(x; \xi)$ to denote the gradient of a smooth function $c$ with respect to $x$ for a given $\xi$. 

\vspace{.5cm}
\begin{lemma}
\label{lemma:regularity-convex-composite}
Let Assumptions~\ref{assumption:avoid-boundary-cc} and~\ref{assumption:weak-convexity-for-composition} hold.
Then 
$f(x; \xi) = \loss(c(x; \xi))$ satisfies Assumption~\ref{assumption:weak-convexity}.

Moreover, 
\begin{equation*}
	\partial f(x; \xi) = \partial \loss(c(x; \xi)) \grad c(x; \xi),
\end{equation*}
 with 
\begin{equation*}
\begin{split}
	\partial f(x) = \E_{\xi \sim \P}[\partial f(x; \xi)],~~~\text{and}~~~\partial f_S(x) = \E_{\xi \sim \P_m}[\partial f(x; \xi)].
\end{split} 
\end{equation*}
\end{lemma}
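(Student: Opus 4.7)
The plan is to follow the strategy of \cite[Claim 1 and Lemma 3.6]{DuchiRu18}, organizing the argument in three parts corresponding to the three conclusions: weak convexity of the composition, the chain rule for its subdifferential, and the expectation identities.

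First I would verify that $f(y,\xi) = \loss(c(y;\xi))$ satisfies Assumption~\ref{assumption:weak-convexity}. Fix $x \in \openset$ with radius $\eps(x) > 0$ from Assumption~\ref{assumption:weak-convexity-for-composition}. The first bullet of that assumption forces $\loss$ to be locally Lipschitz along the image $c(\B(x;\eps(x));\xi)$ with a constant controlled by $L_x(\xi)$, while the second bullet makes $\grad c(\cdot;\xi)$ Lipschitz with constant $\beta_\eps(x,\xi)$ on $\B(x;\eps(x))$. A direct computation combining the subgradient inequality for the convex $\loss$ with a first-order Taylor expansion of $c$ then shows that $y \mapsto \loss(c(y;\xi)) + \tfrac{\lambda(x,\xi)}{2}\norm{y}^2$ is convex on $\B(x;\eps(x))$ for a choice $\lambda(x,\xi) = C \cdot L_x(\xi) \, \beta_\eps(x,\xi)$ with a universal constant $C$. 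Integrability of $\lambda(x,\xi)$ follows from the integrability hypotheses in Assumption~\ref{assumption:weak-convexity-for-composition} together with Cauchy--Schwarz (after, if necessary, refining the radius $\eps(x)$ so that both $L_x(\xi)$ and $\beta_\eps(x,\xi)$ are extracted from a common neighborhood).

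Second, the chain rule $\partial f(x;\xi) = \partial \loss(c(x;\xi)) \grad c(x;\xi)$ is a standard consequence of the convex composite calculus once $c(\cdot;\xi)$ is $\mathcal{C}^1$ and $\loss$ is closed and convex. Since the first step shows the composition is locally weakly convex and hence subdifferentially regular, the Fr\'{e}chet subdifferential coincides with the Clarke subdifferential, so \cite[Theorem 10.6]{RockafellarWe98} (chain rule under regularity) applies directly.

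Third, I would establish the expectation formulas. The empirical identity $\partial f_S(x) = \E_{\xi \sim \P_m}[\partial f(x;\xi)]$ is immediate from the Moreau--Rockafellar finite-sum rule: each summand $\tfrac{1}{m}\loss(c(\cdot;\xi_i))$ is subdifferentially regular at $x$ (by the first step), and their horizon subdifferentials vanish, so the sum rule gives $\partial f_S(x) = \tfrac{1}{m}\sum_{i=1}^m \partial f(x;\xi_i)$, which is the Minkowski-sum form of $\E_{\xi \sim \P_m}[\partial f(x;\xi)]$. The population identity $\partial f(x) = \E_{\xi \sim \P}[\partial f(x;\xi)]$ requires the interchange of expectation and subdifferential; this is the main technical hurdle. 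I would invoke the measurable-selection framework outlined in Section~\ref{sec:probability-space-and-random-set-valued-map}: measurability of $\xi \mapsto \partial f(x;\xi)$ (guaranteed for locally weakly convex random functions, cf.\ \cite[Proposition 2.1]{XuZh09}), finiteness of the Aumann integral (from $\norm{\partial f(x;\xi)} \le L_x(\xi)$ with $L_x(\xi)$ integrable), and subdifferential regularity of $f$ inherited from the integrand, together justify the interchange via the classical Ioffe--Levin / Hiriart-Urruty theorem as recorded in \cite[Claim 1]{DuchiRu18}.

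The main obstacle is this last interchange step: ensuring the integrability and measurability hypotheses align with the compactness and regularity needed so that the Aumann integral of $\partial f(\cdot;\xi)$ actually coincides with the Fr\'{e}chet subdifferential of the expectation. The difficulty is that Assumption~\ref{assumption:weak-convexity-for-composition} only gives integrability of bounds on a neighborhood that may depend on $x$; I expect the argument to proceed by fixing $x$, invoking the interchange locally on a small enough ball where all relevant bounds are uniformly integrable, and then reading off the pointwise identity at $x$. The remaining work amounts to checking that the conditions of \cite[Claim 1]{DuchiRu18} hold under Assumptions~\ref{assumption:avoid-boundary-cc}--\ref{assumption:weak-convexity-for-composition}, which is a bookkeeping exercise given the first two steps.
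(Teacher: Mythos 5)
The paper does not actually prove Lemma~\ref{lemma:regularity-convex-composite}: it is stated with no proof and attributed directly to \cite[Claim 1 + Lemma 3.6]{DuchiRu18}. Your high-level plan --- reconstruct Duchi--Ruan's argument with weak convexity from Lemma~3.6, the chain rule and interchange from Claim~1 --- is therefore the same route the paper takes, up to the paper's choice to cite rather than re-derive.

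Two details in your reconstruction would not survive as written. First, the claimed weak-convexity modulus $\lambda(x,\xi) = C\,L_x(\xi)\,\beta_\eps(x,\xi)$ does not follow from the Taylor-expansion argument you sketch. That argument produces a modulus of the form $\norm{v}\,\beta_\eps(x,\xi)$ for $v \in \partial\loss(c(y;\xi))$, i.e., it needs a local Lipschitz constant of $\loss$ itself along the image of $c$. But the first bullet of Assumption~\ref{assumption:weak-convexity-for-composition} only bounds $\norm{\grad c(y;\xi)\,\partial\loss(c(y;\xi))}$ by $L_x(\xi)$; this does not control $\norm{\partial\loss(c(y;\xi))}$ unless one assumes additional nondegeneracy of the Jacobian. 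So your statement that the first bullet ``forces $\loss$ to be locally Lipschitz along the image $\ldots$ with a constant controlled by $L_x(\xi)$'' is not justified from the hypotheses as written. Second, your appeal to Cauchy--Schwarz for integrability of the product $L_x(\xi)\beta_\eps(x,\xi)$ is not valid under the stated integrability: Assumption~\ref{assumption:weak-convexity-for-composition} only gives $L_x, \beta_\eps \in L^1(\P)$, and the product of two $L^1$ random variables need not be $L^1$; Cauchy--Schwarz would require square-integrability of both factors, which is not assumed in the paper (though a version of this does appear in \cite{DuchiRu18}). If you were writing a self-contained proof rather than citing \cite{DuchiRu18}, both of these would need to be repaired --- either by adjusting the stated assumption to what the derivation genuinely requires, or by finding a modulus of a different (e.g., additive) form whose integrability is secured by the $L^1$ bounds alone.
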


For ease of reference in future discussions, we establish a result that stems directly from 
Theorem~\ref{theorem:main-stochastic-minimization-result} and Lemma~\ref{lemma:regularity-convex-composite}
for stochastic convex-composite minimization problems. 

\vspace{.5cm}
\begin{theorem}
\label{theorem:main-stochastic-convex-composition-result}
Let Assumptions~\ref{assumption:avoid-boundary-cc} and~\ref{assumption:weak-convexity-for-composition} hold
for $f(x, \xi) = \loss(c(x; \xi))$. Suppose at every $x \in \openset$, the functions $G(x)$ and $G_S(x)$ obey:  
\begin{equation*}
	G(x) \in \E_{\xi \sim \P}[\partial f(x; \xi)]~~~\text{and}~~~G_S(x) \in \E_{\xi \sim \P_m}[\partial f(x; \xi)].
\end{equation*} 
Then, the following inequality holds with probability one: 
\begin{equation*}
	\sup_{x \in X \cap \openset} \H(\partial \phi(x), \partial \phi_S(x)) \le \sup_{x \in \openset}\norm{G(x) - G_S(x)}.
\end{equation*} 
In the above, $\phi(x)$ and $\phi_S(x)$ refer to the convex-composite objectives in 
equation~\eqref{eqn:stochastic-convex-composite-objectives}. 
\end{theorem}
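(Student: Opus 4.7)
The plan is to deduce Theorem~\ref{theorem:main-stochastic-convex-composition-result} as a direct corollary of the general principle established in Theorem~\ref{theorem:main-stochastic-minimization-result}, with Lemma~\ref{lemma:regularity-convex-composite} serving as the bridge that verifies the hypotheses and identifies the relevant selections.

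First I would observe that Assumption~\ref{assumption:avoid-boundary-cc} is literally Assumption~\ref{assumption:avoid-boundary} instantiated at $f(x,\xi) = \loss(c(x;\xi))$, so Assumption~\ref{assumption:avoid-boundary} is immediately in force. Next, invoking Lemma~\ref{lemma:regularity-convex-composite}, Assumptions~\ref{assumption:avoid-boundary-cc} and~\ref{assumption:weak-convexity-for-composition} together imply that $f(x,\xi) = \loss(c(x;\xi))$ satisfies Assumption~\ref{assumption:weak-convexity} on the open set $\openset$. Hence the setup of Section~\ref{sec:implications-to-stochastic-weakly-convex-minimizations} applies and Theorem~\ref{theorem:main-stochastic-minimization-result} is available for the empirical and population risks $\phi_S$ and $\phi$ built from this convex-composite $f$.

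The second step is to check that the given $G$ and $G_S$ are legitimate selections of the relevant subdifferentials. Here I would again invoke Lemma~\ref{lemma:regularity-convex-composite}, which states the identities
\begin{equation*}
\partial f(x) = \E_{\xi \sim \P}[\partial f(x;\xi)], \qquad \partial f_S(x) = \E_{\xi \sim \P_m}[\partial f(x;\xi)],
\end{equation*}
for every $x \in \openset$, where $\partial f(x;\xi) = \partial \loss(c(x;\xi))\grad c(x;\xi)$. By hypothesis, $G(x) \in \E_{\xi \sim \P}[\partial f(x;\xi)] = \partial f(x)$ and $G_S(x) \in \E_{\xi \sim \P_m}[\partial f(x;\xi)] = \partial f_S(x)$ for every $x \in \openset$, so $G$ and $G_S$ qualify as selections of $\partial f$ and $\partial f_S$ respectively.

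Having verified the hypotheses and identified the selections, the conclusion follows immediately by applying Theorem~\ref{theorem:main-stochastic-minimization-result}, which yields, with probability one,
\begin{equation*}
\sup_{x \in X \cap \openset} \H(\partial \phi(x), \partial \phi_S(x)) \le \sup_{x \in \openset} \norm{G(x) - G_S(x)}.
\end{equation*}
There is no real obstacle in this argument; the substantive work has already been carried out in Theorem~\ref{theorem:bound-HD-by-MS}, Theorem~\ref{theorem:main-stochastic-minimization-result}, and Lemma~\ref{lemma:regularity-convex-composite}. The only care needed is to cite Lemma~\ref{lemma:regularity-convex-composite} both for weak-convexity (to access the general theorem) and for the Aumann-expectation description of the subdifferentials (to recognize $G, G_S$ as admissible selections).
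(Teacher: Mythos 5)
Your proof is correct and takes exactly the route the paper itself indicates: the paper states that Theorem~\ref{theorem:main-stochastic-convex-composition-result} ``stems directly from Theorem~\ref{theorem:main-stochastic-minimization-result} and Lemma~\ref{lemma:regularity-convex-composite}'' and supplies no further argument. You have simply made the implicit chain explicit---use Lemma~\ref{lemma:regularity-convex-composite} to confirm Assumption~\ref{assumption:weak-convexity} and to identify $\partial f$, $\partial f_S$ with the Aumann expectations so that $G, G_S$ are valid selections, then invoke Theorem~\ref{theorem:main-stochastic-minimization-result}---which matches the paper's intent.
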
 

\section{Explicit Uniform Convergence Rates} 
\label{sec:explicit-uniform-convergence-rates}
In this section, we will delve deep into the convergence rate of the subdifferentials 
\begin{equation}
\label{eqn:explicit-target}
	\sup_{x \in X\cap \openset} \H(\partial \phi(x), \partial \phi_S(x))
\end{equation}
for a class of stochastic convex-composite objectives where 
\begin{equation}
	\phi(x) = \E_{\xi \sim \P}[\loss(c(x; \xi))] + R(x) + \oo_X(x),~~~~\phi_S(x) = \E_{\xi \sim \P_m}[\loss(c(x; \xi))] + R(x) + \oo_X(x).
\end{equation}
Here, the crucial assumption is that $\loss \colon \mathbb{R} \to \mathbb{R}$ is a one-dimensional convex function.
The function $c(\cdot; \xi) \colon \mathbb{R}^d \to \mathbb{R}$ is $\mathcal{C}^1$ smooth on $\R^d$ for every $\xi$, 
aligning with the framework in~Section~\ref{sec:a-class-of-stochastic-weakly-convex-minimizations}.  As before,
$R \colon \mathbb{R}^d \to \mathbb{R}$ is closed and convex, and $X$ is nonempty, closed and convex.
This setting is particularly relevant as it covers a broad spectrum of practical objectives, including all 
examples previously discussed in 
Section~\ref{sec:a-class-of-stochastic-weakly-convex-minimizations}. In our considerations, the set 
$\openset$ is usually an open ball $\B(x_0; r)$ in the Euclidean space where $x_0 \in \R^d$ is the center and 
$r > 0$ is the radius. 

Despite their wide applications, tight bounds on the subdifferential difference of these objectives 
remain largely open in the literature~\cite[Section 5]{DavisDr22}, which motivates our investigations. 
We will show in Section~\ref{sec:applications} how our main result in this section, Theorem~\ref{theorem:final-result}, 
yields tight bounds on the subdifferential differences, and leads to 
sharp quantitative characterizations on the nonsmooth landscape of stochastic convex-composite formulations in finite samples. 



\subsection{Subgradient Selections}
Theorem~\ref{theorem:main-stochastic-convex-composition-result} shows that, under regularity 
Assumptions~\ref{assumption:avoid-boundary-cc} and~\ref{assumption:weak-convexity-for-composition}, 
there is the bound: 
\begin{equation}
\label{eqn:bound-basic}
	\sup_{x \in X\cap \openset} \H(\partial \phi(x), \partial \phi_S(x)) \le \sup_{x \in \openset}\norm{G(x) - G_S(x)}
\end{equation}
where $\openset$ is an open set. 
Remarkably, $G$ and $G_S$ can be any pair of selections from the subdifferentials (cf. Lemma~\ref{lemma:regularity-convex-composite}
and Theorem~\ref{theorem:main-stochastic-convex-composition-result}):  
\begin{equation}
\label{eqn:requirements-for-subgradient-selections} 
\begin{split} 
	G(x) \in \E_{\xi \sim P}[\partial \loss(c(x; \xi)) \grad c(x; \xi)], ~~~~
	G_S(x) &\in \E_{\xi \sim P_m}[\partial \loss(c(x; \xi)) \grad c(x; \xi)]. 
\end{split} 
\end{equation}	
The main goal of this subsection is to select a pair of subgradients $G(x)$ and $G_S(x)$
that are simple to analyze---from a probabilistic sense---regarding their uniform differences over an open set $\openset$. 

This selection process begins with identifying a subgradient for the nonsmooth convex function $\loss$, which we decompose 
into a sum $\loss = \loss\sm+\loss\ns$ of a smooth component $\loss\sm$ and a nonsmooth component $\loss\ns$. To do so, we use
$\loss^\prime_+$ and $\loss^\prime_-$ to denote the right-hand and left-hand derivative of $\loss$, respectively. 
We enumerate the set of nondifferentiable points $\{t \in \R: \loss^\prime_+(t) \neq \loss^\prime_-(t)\}$ as $\{t_j\}_{j =1}^\infty$,
given the fact that these nondifferentiable points are always countable~\cite[Theorem 2.1.2]{BorweinVa10}.

For the majority of Section~\ref{sec:explicit-uniform-convergence-rates}, 
we will work under the assumption that $\sum_{j=1}^\infty (\loss^\prime_+(t_j) - \loss^\prime_-(t_j)) < \infty$
so that $\loss\ns$ and $\loss\sm$ are well-defined,
although this assumption will be unnecessary in our final bound in Theorem~\ref{theorem:final-result}. 

Under this assumption, we can decompose this nonsmooth $h$ into two distinct parts:
\begin{equation}
	\loss(z) = \loss\sm(z) + \loss\ns(z).
\end{equation} 
Here, $\loss\ns \colon  \R  \to \R$ (where ``ns" stands for nonsmooth) denotes the nonsmooth component of $\loss$, which 
is defined by: 
\begin{equation}
\label{eqn:loss-ns-definition}
	\loss\ns(z) = \sum_{j=1}^\infty a_j \cdot (z-t_j)_+,
\end{equation}
where the coefficients are defined by
$a_j = \loss^\prime_+(t_j) - \loss^\prime_-(t_j) > 0$. Notably, $\loss\ns$ is well-defined because  
$\sum_{j=1}^\infty a_j = \sum_{j=1}^\infty (\loss^\prime_+(t_j) - \loss^\prime_-(t_j)) < \infty$.
The smooth component $\loss\sm: \R \to \R$ (where ``sm" stands for smooth) is defined as 
$\loss\sm = \loss - \loss\ns$. By construction, $\loss\sm$ is a $\mathcal{C}^1$ smooth and convex function, 
as one can easily verify its derivative $(\loss\sm)'$ is continuous and monotonically increasing on $\R$. 

To select a subgradient of the convex function $\loss$, we first note that the smooth component $\loss\sm$ possesses a unique subgradient, which 
corresponds to its derivative:
\begin{equation}
	g\sm(z) = (\loss\sm)^\prime(z).
\end{equation}
For the nonsmooth component $\loss\ns$, a subgradient at every $z \in \R$ is chosen as follows: 
\begin{equation}
\label{eqn:g-ns-part}
	g\ns(z) = \sum_{j=1}^\infty a_j \mathbf{1}(z \ge t_j) \in \partial \loss\ns(z),
\end{equation}
where $z \mapsto \mathbf{1}(z \ge t_j)$  is an indicator function that is a subgradient for the mapping $z \mapsto (z-t_j)_+$, 
taking the value $1$ when $z \ge t_j$ and $0$ otherwise. Combining these, we define a subgradient $g$ for $\loss$ at every $z$: 
\begin{equation*}
	g(z) := (g\sm + g\ns) (z) \in \partial \loss(z).
\end{equation*} 
With $g(z)$ defined, we can then pick the subgradients $G$ and $G_S$ that satisfy equation~\eqref{eqn:requirements-for-subgradient-selections}:
\begin{align}
\begin{split}
	G(x)  &:=\E_{\xi \sim P}\left[g(c(x, \xi)) \cdot \grad c(x, \xi)\right] \\
	G_S(x) &:=\E_{\xi \sim P_m}\left[g(c(x, \xi)) \cdot \grad c(x, \xi)\right]
\end{split}. 
\end{align} 
To summarize, $G$ and $G_S$ are well-defined functions under the condition $\sum_{j=1}^\infty (\loss^\prime_+(t_j) - \loss^\prime_-(t_j)) < \infty$, provided that regularity Assumptions~\ref{assumption:avoid-boundary-cc} and~\ref{assumption:weak-convexity-for-composition} are also satisfied.

In the next subsection, we will provide a uniform convergence result that establishes a high probability upper bound on 
$\norm{G_S(x)- G(x)}$ over $x$ in a Euclidean ball in $\R^d$. We will address the following question. 

\vspace{.1cm}
\textbf{Question}: Given a center $x_0 \in \R^d$, a radius $r$, a threshold $\delta \in (0, 1)$, what values of $u$ can we choose such that
\begin{equation}
\label{eqn:statistical-learning-question}
	\P^*\left(\sup_{x: x\in \B(x_0; r)} \norm{G_S(x) - G(x)} \ge u\right) \le \delta.  
\end{equation}
Answers to this question will automatically yield a high probability upper bound on the subdifferential difference 
$\sup_{x\in X \cap\B(x_0; r)}\H(\phi(x), \phi_S(x))$ by applying equation~\eqref{eqn:bound-basic} to the open set 
$\openset = \B(x_0; r)$. Recall $\P^*$ denotes the outer probability (Section~\ref{sec:probability-measure-and-measurability}), 
which is required because the supremum in equation~\eqref{eqn:statistical-learning-question} may not be measurable
under the probability space.

\subsection{A Uniform Bound on Subgradient Selections}
\label{sec:main-result-uniform-convergence-rates}
This section examines the uniform rate of convergence of $G_S$ to $G$ over any Euclidean ball of radius $r$, 
a question that is arised in equation~\eqref{eqn:statistical-learning-question}.  Recall our definitions: 
\begin{equation}
\label{eqn:detail-definition-of-G-S-G}
\begin{split} 
	G_S(x) 
	&= \E_{\xi \sim P_m}\bigg[\sum_{j=1}^\infty a_j \one\{c(x, \xi) \ge t_j\} \cdot \grad c(x, \xi)
	 +  (\loss\sm)^\prime(c(x, \xi)) \cdot \grad c(x, \xi)\bigg] \\
	G(x) 
	&= \E_{\xi \sim P}~\bigg[\sum_{j=1}^\infty a_j \one\{c(x, \xi) \ge t_j\} \cdot \grad c(x, \xi)
	+  (\loss\sm)^\prime(c(x, \xi)) \cdot \grad c(x, \xi)\bigg]
\end{split}.
\end{equation}

A key challenge in examining the rate of uniform convergence from $G_S$ to $G$ 
is due to the nonsmooth nature of $G_S$ and $G$,  whose definition involves indicator functions, as outlined in equation
\eqref{eqn:detail-definition-of-G-S-G}. To resolve this challenge, our approach 
leverages the notion of Vapnik–Chervonenkis dimension (VC dimension)~\cite{VapnikCh71}, a concept familiar to experts in statistical learning theory.
In short words, VC dimension measures the complexity of sets used in defining these indicator functions (termed as classifiers in statistical learning theory), 
and this complexity measure governs the uniform generalizability from empirical averages to 
expectations for these indicator functions.

We state the definition of VC dimension for completeness~\cite[Section 3.6]{Vapnik13}. 

\vspace{.2cm}

\newcommand{\VC}{{\rm vc}}
\newcommand{\F}{\mathcal{F}}

\begin{definition}[VC Dimension]
Let $\mathcal{H}$ denote a set family, and $E$ a set. Let
$\mathcal{H} \cap E = \{H \cap E \mid H \in \mathcal{H}\}$. We say a set $E$ is \emph{shattered} if 
$\mathcal{H} \cap E$ contains all the subsets of $E$, i.e., $|\mathcal{H} \cap E| = 2^{|E|}$. The VC 
dimension $\VC(\mathcal{H})$ is the cardinality of the largest set that is \emph{shattered} by $\mathcal{H}$.
If arbitrarily large sets can be shattered, the VC dimension of $\mathcal{H}$ is $\infty$.
\end{definition}

The collection of sets that appear in the indicator functions will be denoted as: 
\begin{equation}
\label{eqn:indicator-set-class-F}
	\F = \{\{\xi \in \Xi \colon c(x, \xi) \ge t\} \mid x \in \R^d, t\in \R\}.
\end{equation}
More transparently, the VC dimension of $\F$, denoted by $\VC(\F)$, is the largest integer $N$ such that there exist
$\xi_1, \xi_2, \ldots, \xi_N \in \Xi$ so that for any binary labeling sequence
$b_1, b_2, \ldots, b_N \in \{0, 1\}$, it is possible to find parameters $x \in \R^d, t \in \R$ that satisfy: 
\begin{equation*}
	b_i = \one \{c(x; \xi_i) \ge t\}~~\text{for every $1 \le i\le N$}.
\end{equation*}
The complexity measure $\VC(\F)$ characterizes generalization capabilities of $\F$ and bounds the uniform rate of convergence from 
$G_S$ to $G$. For common techniques of upper bounding VC dimensions of set families, see~\cite[Chapter 2]{VanDerVaartWe96}. 
When the mapping $x \mapsto c(x; \xi)$ are polynomials in $x \in \R^d$ for any given $\xi$---a condition met in our applications---we 
establish a result giving a tight upper bound of the VC dimension, based on results in real algebraic geometry. 
See Section~\ref{sec:VC-dimension-bounds} for details.



Our main results rely on Lipschitz continuity and regularity assumptions concerning the tails of 
the random vectors $\nabla c(x, \xi)$ and $(\loss\sm)'(c(x, \xi)) \cdot \nabla c(x, \xi)$. These random 
vectors appear in our subgradient selection~\eqref{eqn:detail-definition-of-G-S-G}, and 
our assumptions about them align with standard treatments in the literature. To state the assumptions, we 
first recall the 
concept of a subexponential tail random vector. We define this in terms
of Orlicz norms, following~\cite[Section 3.4.4]{Vershynin18}.

\vspace{.2cm}

\begin{definition}
\label{definition:sub-exponential}
A random vector $Z$ is said to be $\sigma$-subexponential if for every $v$ with $\norm{v} = 1$: 
\begin{equation*}
	\E\left[\exp\left(\frac{|\langle Z, v\rangle|}{\sigma}\right)\right] \le 2. 
\end{equation*}
\end{definition} 
Examples of $\sigma$-sub-exponential random vectors include a normal distribution $Z \sim \normal(0, \Sigma)$
where the largest eigenvalue of $\Sigma$ is bounded above by $c\sigma$, or distributions with bounded support, 
such as $\norm{Z} \le c\sigma$ almost surely, with the numerical constant $c=1/2$~\cite[Section 2]{Vershynin18}. 

By Markov's inequality,  $\P(|\langle v, Z\rangle| \ge \sigma \cdot u) \le 2 e^{-u}$ for any unit vector $v$ and $u \ge 0$.
Furthermore, by Bernstein's inequality~\cite[Theorem 2.8.1]{Vershynin18} and the centering property of sub-exponential 
random variables~\cite[Exercise 2.7.10]{Vershynin18}, 
for i.i.d. $\sigma$-subexponential random variables $Z_1, Z_2, \ldots, Z_m$,
there is the \emph{exponential} tail bound for the mean $\frac{1}{m} \sum_{i=1}^m Z_i$ 
that holds for every $v$ with $\norm{v} = 1$, and $u \ge 0$: 
\begin{equation}
\label{eqn:sub-exponential-immediate}
	\P\left(\frac{1}{m}\left|\sum_{i=1}^m \langle Z_i - \E[Z], v \rangle\right| \ge \sigma \cdot u\right) \le 2\exp\left(- c m\min\{u^2, u\}\right).
\end{equation}
In the above, $c > 0$ is an absolute constant. 


We first assume the random vectors $\nabla c(x_0, \xi)$ and $(\loss\sm)'(c(x_0, \xi)) \cdot \nabla c(x_0, \xi)$ are subexponential, 
where we recall that $x_0$ is the center of the ball of interest (see equation~\eqref{eqn:statistical-learning-question}). 
This facilitates an initial \emph{exponential} tail probability bound on the difference between $G_S(x_0)$ and $G(x_0)$. 

\vspace{.2cm}
\renewcommand{\theassumption}{C.\arabic{assumption}}
\begin{assumption}
\label{assumption:sub-exponential-assumption}
Each of the two random vectors
\begin{equation*}
\text{$\nabla c(x_0, \xi)$ ~~and~~ $(\loss\sm)'(c(x_0, \xi)) \cdot \nabla c(x_0, \xi)$}
\end{equation*} 
is $\sigma_0$-subexponential.
\end{assumption} 

To further ensure a uniform \emph{exponential} tail probability bound on the difference between  $G_S(x)$ and $G(x)$
over all possible $x \in \B(x_0; r)$, we will assume that the increments of the random processes $\{\grad c(x, \xi)\}_{x \in \B(x_0; r)}$ and 
$\{(\loss\sm)'(c(x, \xi))\nabla c(x, \xi)\}_{x \in \B(x_0; r)}$ are subexponential in the following sense. 

\vspace{.2cm}
\begin{assumption}
\label{assumption:Lipschitz-condition}
For every $x_1, x_2 \in \B(x_0; r)$, each of the two random vectors 
\begin{equation*}
\text{$\grad c(x_1, \xi) - \grad c(x_2, \xi)$ ~~and~~ $e(x_1, \xi) - e(x_2, \xi)$}
\end{equation*} 
is $\sigma \norm{x_1 - x_2}$ subexponential. In the above, $e(x, \xi) = (\loss\sm)'(c(x, \xi)) \nabla c(x, \xi)$ for every $x \in \B(x_0; r)$. 
\end{assumption} 
This assumption is commonly used in empirical process theory to extend high probability bounds from individual points to 
the uniform control of the supremum of stochastic processes, utilizing the chaining technique. 
Notably, this assumption naturally holds in a variety of 
statistical applications. For a reference, see~\cite[Chapter 2]{VanDerVaartWe96}.

We are now ready to present our main result. The proof of Theorem~\ref{theorem:basic-statistical-learning-result}
employs empirical process theory tools, namely, the calculus rules for outer probability 
measure~\cite[Chapter 1.2-5]{VanDerVaartWe96}, the chaining 
argument~\cite{Dudley67}, and the Sauer-Shelah Lemma---a fundamental combinatorial principle using the 
VC dimension to control function class complexities~\cite{Sauer72, Shelah72}. 
The proof is relatively standard for experts in statistical learning theory 
and is detailed in Section~\ref{sec:proof-theorem-basic-statistical-learning-result}. 

\vspace{.5cm}
\begin{theorem}
\label{theorem:basic-statistical-learning-result}
Assume Assumptions~\ref{assumption:sub-exponential-assumption} and~\ref{assumption:Lipschitz-condition}. 
Assume $\sum_{j=1}^\infty (\loss^\prime_+(t_j) - \loss^\prime_-(t_j)) < \infty$.

There exists a universal constant $C > 0$ such that for every $\delta \in (0, 1)$: 
\begin{equation}
\label{eqn:uniform-G-G-S-result}
	\P^* \left(\sup_{x:x \in \B(x_0; r)} \norm{G(x) - G_S(x)} \ge C(\sigma_0+\sigma r)\zeta\cdot 
		\max\{\Delta, \Delta^2\}\right) \le \delta.
\end{equation}
In the above, 
\begin{equation}
	\zeta = 1+ \sum_{j=1}^\infty (\loss_+^\prime(t_j) - \loss_-^\prime(t_j))
\end{equation}
 where $t_j$ enumerate points where $\loss$ is non-differentiable, and 
\begin{equation}
\label{eqn:error-definition}
	\Delta =  \sqrt{\frac{1}{m} \cdot \Big(d+\VC(\F)\log m +\log( \frac{1}{\delta})\Big)},
\end{equation} 
where $\VC(\F)$ is the VC dimension of the set family $\F$ defined in equation~\eqref{eqn:indicator-set-class-F}. 
\end{theorem}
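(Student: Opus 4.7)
The plan is to decompose $G(x) - G_S(x)$ into its smooth and nonsmooth parts (aligned with the split $\loss = \loss\sm + \loss\ns$) and bound each uniformly over $x \in \B(x_0; r)$. From~\eqref{eqn:detail-definition-of-G-S-G} I write
$$G(x) - G_S(x) = A(x) + \sum_{j=1}^\infty a_j\, B_j(x),$$
with $A(x) := (\E_{\P} - \E_{\P_m})[(\loss\sm)'(c(x,\xi))\grad c(x,\xi)]$ and $B_j(x) := (\E_{\P} - \E_{\P_m})[\one\{c(x,\xi) \ge t_j\}\grad c(x,\xi)]$. Because $\sum_j a_j = \zeta - 1$, it suffices to bound $\sup_x \norm{A(x)}$ separately and the single uniform quantity
$$\Psi := \sup_{(x,t) \in \B(x_0;r) \times \R} \norm{(\E_{\P} - \E_{\P_m})[\one\{c(x,\xi) \ge t\}\grad c(x,\xi)]},$$
which dominates $\sup_x \norm{\sum_j a_j B_j(x)} \le (\zeta-1)\Psi$.

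For $\sup_x \norm{A(x)}$, I would apply a Dudley-type chaining argument over $\B(x_0;r)$. Assumption~\ref{assumption:sub-exponential-assumption} yields a Bernstein-type pointwise bound at $x_0$ via~\eqref{eqn:sub-exponential-immediate}, and Assumption~\ref{assumption:Lipschitz-condition} supplies subexponential increments of scale $\sigma\norm{x_1-x_2}$ along chaining links. Integrating Dudley's entropy bound against the metric entropy of $\B(x_0;r)$ (of dimension $d$) gives, with probability at least $1 - \delta/2$, $\sup_x \norm{A(x)} \lesssim (\sigma_0 + \sigma r)\max\{\Delta, \Delta^2\}$; the $\max\{\Delta,\Delta^2\}$ reflects the subexponential (rather than sub-Gaussian) tail regime inherited from~\eqref{eqn:sub-exponential-immediate}.

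For $\Psi$, I combine the Sauer--Shelah lemma with the chaining step above. On any realization $\xi_1,\ldots,\xi_m$, the VC dimension $\VC(\F)$ of the class~\eqref{eqn:indicator-set-class-F} ensures that the patterns $(\one\{c(x,\xi_i) \ge t\})_{i=1}^m$ assume at most $m^{\VC(\F)}$ distinct values as $(x,t)$ varies. Each pattern selects a subset $I \subseteq [m]$ and reduces $\Psi$ to controlling $\sup_x \norm{m^{-1}\sum_{i \in I}\grad c(x,\xi_i) - \E[\one\{c(x,\xi) \ge t\}\grad c(x,\xi)]}$. A conditional chaining argument (using Assumptions~\ref{assumption:sub-exponential-assumption} and~\ref{assumption:Lipschitz-condition}) handles each pattern, and a union bound over the $m^{\VC(\F)}$ patterns injects the extra $\VC(\F)\log m$ term in~\eqref{eqn:error-definition}; the output is $\Psi \lesssim (\sigma_0 + \sigma r)\max\{\Delta, \Delta^2\}$ with probability at least $1 - \delta/2$. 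Measurability of the supremum is treated with the outer-probability calculus of~\cite[Chapter 1.2-5]{VanDerVaartWe96}, which is why the conclusion is stated under $\P^*$.

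The main obstacle is rigorously combining the VC discretization of the indicator factor with the chaining over the continuous gradient factor: conditioning on a binary pattern alters the conditional law of $\grad c(x,\xi_i)$, so a symmetrization or multiplier-inequality argument will be needed to decouple the two while preserving the subexponential moment structure furnished by Assumptions~\ref{assumption:sub-exponential-assumption} and~\ref{assumption:Lipschitz-condition}. A secondary subtlety is that $\loss\ns$ is defined by a countably infinite sum in~\eqref{eqn:loss-ns-definition}; folding the $B_j$'s into the single uniform quantity $\Psi$ (indexed by all $t \in \R$, not just $\{t_j\}$) sidesteps any term-by-term union bound, so the condition $\sum_j a_j < \infty$ enters only at the final summation step that produces the $\zeta$ factor.
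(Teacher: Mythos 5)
Your plan is essentially the route the paper itself takes: split $G-G_S$ along $\loss = \loss\sm + \loss\ns$ (your $A(x)$ and $\Psi$ are exactly the paper's $\Gamma_2$ and $\Gamma_1$), then bound each by chaining over the Euclidean ball for the gradient factor while invoking Sauer--Shelah over the indicator class to pick up the $\VC(\F)\log m$ term, all after symmetrizing so that the data-dependent pattern count is legitimate.

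One execution detail worth flagging. Your sketch proposes to discretize the indicator patterns first and then chain \emph{within} each pattern, and you correctly point out that this hits the obstacle that the sets $\{x : (\one\{c(x,\xi_i)\ge t\})_i = b\}$ are data-dependent and awkward in $x$-space. The paper avoids this entirely by \emph{decoupling} the two roles of $x$: it introduces a proxy $\Gamma_1' \ge \Gamma_1$ in which the indicator parameter $z_1$ ranges freely over $\R^d$ and the gradient parameter $z_2$ ranges over the ball, so that the nesting is \emph{outer chaining over $z_2$, inner VC over $(z_1,t)$} at a fixed $z_2$. At the inner level, after symmetrization and conditioning on the sample, the dependence on the binary pattern is stripped out not by a multiplier argument but by the elementary pointwise inequality $\E_{\rad_i}[\exp(\rad_i b_i q_i)]=\cosh(b_i q_i)\le\cosh(q_i)$, which makes the per-pattern MGF bound pattern-free, so the Sauer--Shelah count $|H(\xi_{1:m})|\le(2m)^{\VC(\W)}$ enters additively in the exponent. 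If you retain your chaining-inside-VC order without the $z_1/z_2$ decoupling, you would still need some device to prevent the chaining from being tied to a fixed pattern's $x$-domain; decoupling is the simplest fix, and the resulting argument (Theorem~\ref{theorem:uniform-convergence-theorem}, Lemma~\ref{lemma:key-idea-behind-the-lemma}) is otherwise exactly what you describe.
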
 

Recall $\P^*$ denotes the outer probability (Section~\ref{sec:probability-measure-and-measurability}), also 
used to address measurability issues with supremums in empirical process theory~\cite{VanDerVaartWe96}. 
This is technically required because the supremum in equation~\eqref{eqn:uniform-G-G-S-result} may not be 
measurable under the probability space.

Theorem~\ref{theorem:basic-statistical-learning-result} establishes a uniform closeness between the selected subgradient 
$G_S$ and $G$ over the Euclidean ball $\B(x_0; r)$. In the final bound~\eqref{eqn:uniform-G-G-S-result},  
$\zeta$ captures how the nondifferentiability of $\loss$ could potentially deteriorate
the convergence rate. Crucially, the error term $\Delta$ in equation~\eqref{eqn:error-definition} demonstrates how the dimensionality $d$ and 
the complexity measure $\VC(\F)$ of the set family $\F$ influence the uniform convergence rate.

 The presence of $\max\{\Delta, \Delta^2\}$ in the final bound 
instead of just $\Delta$ stems from the sub-exponential assumption  (cf. $\min\{u, u^2\}$ in the Bernstein 
inequality~\eqref{eqn:sub-exponential-immediate}). When $\Delta \le 1$, which is typical for large sample 
size $m$, this term simplifies to $\Delta$, which is $\sqrt{(d+\VC(\F))/m}$ up to logarithmic factors. Our convergence rates
align well with the standard uniform convergence rate $\sqrt{d/m}$ for averages of smooth functions and 
$\sqrt{\VC(\F)/m}$ for indicator functions in the statistics literature~\cite{Wainwright19}. 
This suggests our bounds are in general tight up to logarithmic factors in $m$.

Finally, the $\log m$ term arises from the subexponential tail assumption and can be removed if a norm boundedness 
assumption replaces it in Assumption~\ref{assumption:Lipschitz-condition}, or by using 
more advanced arguments involving additional probabilistic control of the envelope function of 
$\grad c(x, \xi)$ and $(\loss\sm)'(c(x, \xi)) \cdot \nabla c(x, \xi)$~\cite[Chapter 2]{VanDerVaartWe96}. For conciseness and clarity, 
this paper does not pursue such removal.

\subsection{Implication: A Uniform Bound on Subdifferentials} 
We integrate Theorem~\ref{theorem:main-stochastic-convex-composition-result} and 
Theorem~\ref{theorem:basic-statistical-learning-result} to document our ultimate result, Theorem~\ref{theorem:final-result}, on 
uniform convergence of subdifferentials.  At this point, the only work is to collect and streamline all the assumptions in 
Theorem~\ref{theorem:main-stochastic-convex-composition-result} and~\ref{theorem:basic-statistical-learning-result}. To fulfill
Assumption~\ref{assumption:weak-convexity-for-composition} required in Theorem~\ref{theorem:main-stochastic-convex-composition-result}, 
we identify an additional condition beyond 
Assumptions~\ref{assumption:sub-exponential-assumption}---\ref{assumption:Lipschitz-condition}. This Lipschitz-type 
condition is very mild, and holds in many applications in stochastic programming~\cite[Chapter 5]{ShapiroDeRu21} and statistics~\cite[Chapter 5]{Van00}.
 
\vspace{.2cm} 
 
\begin{assumption}
\label{assumption:real-Lipschitz-condition}
There is a measurable function $L:\Xi \to \R$ with $\E_{\xi \sim \P}[L(\xi)] < \infty$ such that 
\begin{equation*}
\begin{split} 
	\norm{\grad c(x_1, \xi) - \grad c(x_2, \xi)} &\le L(\xi) \norm{x_1-x_2} \\
	\norm{e(x_1, \xi) - e(x_2, \xi)} &\le  L(\xi) \norm{x_1-x_2}
\end{split} 
\end{equation*}
hold for every $x_1, x_2 \in \B(x_0; r)$, where $e(x, \xi) = (h\sm)'(c(x, \xi)) \nabla c(x, \xi)$. 
 \end{assumption} 

\vspace{.2cm}

 Theorem~\ref{theorem:final-result} documents the final result on uniform convergence of subdifferentials, recognizing that 
 Assumptions~\ref{assumption:sub-exponential-assumption}---\ref{assumption:real-Lipschitz-condition} 
imply that Assumption~\ref{assumption:weak-convexity-for-composition} holds for $\openset = \B(x_0; r)$.

\vspace{.5cm}
\begin{theorem}
\label{theorem:final-result}
Assume 
Assumptions~\ref{assumption:sub-exponential-assumption}--\ref{assumption:real-Lipschitz-condition},
and $f(x) = \E_{\xi \sim P}[\loss(c(x; \xi))]< \infty$ for $x \in \B(x_0; r)$.

Then there exists a universal constant $C > 0$ such that for every $\delta \in (0, 1)$: 
\begin{equation}
\label{eqn:final-result-conclusion}
	\P^* \left(\sup_{x:x \in X \cap \B(x_0; r)} \H(\partial \phi(x), \partial \phi_S(x)) \ge C(\sigma_0+\sigma r)\zeta\cdot 
		\max\{\Delta, \Delta^2\}\right) \le \delta.
\end{equation}
In the above, $\zeta, \Delta$ follows the same definition in the statement of Theorem~\ref{theorem:basic-statistical-learning-result}.
\end{theorem}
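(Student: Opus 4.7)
The plan is to combine Theorem~\ref{theorem:main-stochastic-convex-composition-result} with the uniform subgradient bound of Theorem~\ref{theorem:basic-statistical-learning-result}, taking $\openset = \B(x_0; r)$. A preliminary case split handles $\zeta = \infty$ easily: then $\sum_j (\loss^\prime_+(t_j) - \loss^\prime_-(t_j)) = \infty$, the right-hand side of \eqref{eqn:final-result-conclusion} is infinite, and the conclusion is vacuous. For the remainder I assume $\zeta < \infty$, so that $\sum_j (\loss^\prime_+(t_j) - \loss^\prime_-(t_j)) < \infty$ and the decomposition $\loss = \loss\sm + \loss\ns$ together with the subgradient selections $G, G_S$ constructed in Section~\ref{sec:explicit-uniform-convergence-rates} are all well defined.

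The bulk of the work is to verify the hypotheses of Theorem~\ref{theorem:main-stochastic-convex-composition-result} on $\openset = \B(x_0; r)$. Assumption~\ref{assumption:avoid-boundary-cc} is delivered directly by the stated integrability $f(x) < \infty$ on $\B(x_0; r)$. Assumption~\ref{assumption:weak-convexity-for-composition} has two parts. The local Lipschitz bound on $\grad c(\cdot, \xi)$ with an integrable constant is immediate from Assumption~\ref{assumption:real-Lipschitz-condition}. The envelope condition $\sup_{y \in \B(x;\eps(x))} \norm{\grad c(y; \xi) \partial \loss(c(y; \xi))} \le L_x(\xi)$ with $L_x$ integrable is the delicate piece; here I would use the sum rule $\partial \loss = (\loss\sm)' + \partial \loss\ns$ to bound any element of $\partial \loss(c(y; \xi)) \grad c(y; \xi)$ in norm by
\begin{equation*}
	|(\loss\sm)'(c(y; \xi))|\cdot \norm{\grad c(y; \xi)} + (\zeta - 1)\, \norm{\grad c(y; \xi)}.
\end{equation*}
The sub-exponential control at $x_0$ from Assumption~\ref{assumption:sub-exponential-assumption} (which gives $L^1$ integrability at $x_0$), transported to $\B(x_0; r)$ via the Lipschitz increments of Assumption~\ref{assumption:real-Lipschitz-condition}, supplies an integrable envelope for both terms.

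Once Assumption~\ref{assumption:weak-convexity-for-composition} is confirmed, Theorem~\ref{theorem:main-stochastic-convex-composition-result} yields, almost surely,
\begin{equation*}
	\sup_{x \in X \cap \B(x_0; r)} \H(\partial \phi(x), \partial \phi_S(x)) \le \sup_{x \in \B(x_0; r)} \norm{G(x) - G_S(x)},
\end{equation*}
provided that the selections $G, G_S$ built in Section~\ref{sec:explicit-uniform-convergence-rates} satisfy $G(x) \in \E_{\xi \sim \P}[\partial f(x; \xi)]$ and $G_S(x) \in \E_{\xi \sim \P_m}[\partial f(x; \xi)]$. This follows from Lemma~\ref{lemma:regularity-convex-composite} together with the fact that $g(z) = g\sm(z) + g\ns(z) \in \partial \loss(z)$ and linearity of the Aumann integral. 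Finally, invoking Theorem~\ref{theorem:basic-statistical-learning-result} (whose hypotheses are exactly Assumptions~\ref{assumption:sub-exponential-assumption}, \ref{assumption:Lipschitz-condition}, and $\zeta < \infty$) and applying monotonicity of outer probability under set inclusion gives \eqref{eqn:final-result-conclusion}.

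The main obstacle is the envelope integrability in Assumption~\ref{assumption:weak-convexity-for-composition}: a convex $\loss$ on $\R$ is only locally Lipschitz, so $\partial \loss(c(y, \xi))$ can blow up with $|c(y, \xi)|$ as $\xi$ varies. The right perspective is that the sub-exponential assumption on the composite vector $(\loss\sm)'(c(x_0, \xi))\grad c(x_0, \xi)$ in C.1 is precisely what absorbs the smooth part across $\xi$, while $\sum_j a_j = \zeta - 1 < \infty$ bounds $\partial \loss\ns$ uniformly; the Lipschitz tail control in C.3 is what propagates both bounds from the single point $x_0$ to a neighborhood. Threading these ingredients together to produce the integrable envelope $L_x(\xi)$ is the only non-mechanical step of the proof.
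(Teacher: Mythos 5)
Your proposal is correct and follows essentially the same outline as the paper's proof: a trivial case split on $\zeta = \infty$, verification that Assumptions~\ref{assumption:avoid-boundary-cc} and \ref{assumption:weak-convexity-for-composition} hold on $\B(x_0;r)$, and then a composition of Theorem~\ref{theorem:main-stochastic-convex-composition-result} with Theorem~\ref{theorem:basic-statistical-learning-result}. The one place where you differ is the envelope integrability in Assumption~\ref{assumption:weak-convexity-for-composition}: the paper appeals to Assumptions~\ref{assumption:sub-exponential-assumption}--\ref{assumption:Lipschitz-condition} together with a ``standard chaining argument'' to show that $\sup_{y\in\B(x_0;r)}\norm{\grad c(y;\xi)}$ and $\sup_{y\in\B(x_0;r)}\norm{e(y;\xi)}$ are integrable, whereas you get the same envelope more directly from Assumption~\ref{assumption:real-Lipschitz-condition}, bounding e.g. $\norm{\grad c(y;\xi)} \le \norm{\grad c(x_0;\xi)} + L(\xi)\,r$ and using the integrability of $L(\xi)$ together with the sub-exponential bound at $x_0$ from Assumption~\ref{assumption:sub-exponential-assumption}. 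Your route is in fact more elementary (no chaining needed) and uses exactly the hypotheses that are already required for the second bullet of Assumption~\ref{assumption:weak-convexity-for-composition}, so it buys a small simplification. You also make explicit the check---implicit in the paper---that the constructed selections $G,G_S$ lie in the Aumann integrals of $\partial f(\cdot;\xi)$, which is a reasonable thing to record. No gaps.
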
 

\begin{proof}
We can assume without loss of generality that $\zeta = 1+\sum_{j=1}^\infty (\loss_+^\prime(t_j) - \loss_-^\prime(t_j)) < \infty$.
If this sum were infinite, then equation~\eqref{eqn:final-result-conclusion} would be trivially satisfied.

Given Theorem~\ref{theorem:main-stochastic-convex-composition-result} and 
Theorem~\ref{theorem:basic-statistical-learning-result}, and with Assumption~\ref{assumption:avoid-boundary-cc}
assumed for $\openset = \B(x_0; r)$ in the statement of Theorem~\ref{theorem:final-result}, 
the remaining task is to show that 
Assumption~\ref{assumption:weak-convexity-for-composition} is met for $\openset = \B(x_0; r)$. 
This can be achieved by first recognizing that for $y \in \B(x_0; r)$: 
\begin{equation*}
\begin{split} 
	\norm{\grad c(y; \xi)\partial \loss(c(y; \xi))} &\le \norm{\grad c(y; \xi) \cdot \partial \loss\ns(c(y; \xi))} + \norm{\grad c(y; \xi) \cdot (\loss\sm)^\prime(c(y; \xi))} \\
	& \le \norm{\grad c(y; \xi)} \cdot \sum_{j=1}^\infty (\loss_+^\prime(t_j) - \loss_-^\prime(t_j)) + \norm{\grad c(y; \xi) \cdot (\loss\sm)^\prime(c(y; \xi))}.
\end{split} 
\end{equation*} 
Assumptions~\ref{assumption:sub-exponential-assumption}--\ref{assumption:Lipschitz-condition} imply that 
$\sup_{y \in \B(x_0; r)} \norm{\grad c(y; \xi)}$ and $\sup_{y \in \B(x_0; r)} \norm{ (\loss\sm)^\prime(c(y; \xi))\grad c(y; \xi) }$
are integrable over $\xi \sim \P$ due to the standard chaining argument~\cite[Section 8]{Vershynin18}. 
This implies that $\sup_{y \in \B(x_0; r)} \norm{\grad c(y; \xi)\partial \loss(c(y; \xi))}$
has an integrable upper bound given that $\sum_{j=1}^\infty (\loss_+^\prime(t_j) - \loss_-^\prime(t_j)) < \infty$, thus confirming the first condition of 
Assumption~\ref{assumption:weak-convexity-for-composition} holds for $\openset = \B(x_0; r)$. The second condition 
of Assumption~\ref{assumption:weak-convexity-for-composition} follows trivially from 
Assumption~\ref{assumption:real-Lipschitz-condition}.
\end{proof} 

\begin{remark}
\emph{
Theorem~\ref{theorem:final-result} establishes a rate for uniform convergence of subdifferential mappings for stochastic 
convex-composite objectives at $\sqrt{\max\{d, \VC(\F)\}/m}$, modulo logarithmic factors.
Our results are different from approaches that depend on the \emph{continuous} differentiability of the 
population objectives, 
or require a nonatomic probability distribution $\P$ to achieve a $1/\sqrt{m}$ rate~\cite{ShapiroXu07, RalphXu11}
(see Section~\ref{sec:related-works} for a more detailed literature review). Indeed, 
Theorem~\ref{theorem:final-result} achieves this tight rate without requiring continuity type assumptions 
in the subdifferential mappings of the 
objective or specific conditions on the data distribution $\P$. 
}

\emph{
In the literature on uniform convergence rates of subdifferential mappings for stochastic weakly-convex objectives, 
methods using Moreau envelope smoothing~\cite{Moreau65} and Attouch's epigraphical convergence 
theorem~\cite{Attouch77} achieve a convergence rate of $\sqrt[4]{1/m}$ suboptimal in terms of the sample size $m$~\cite{DavisDrPa20, DavisDr22}. 
This $\sqrt[4]{1/m}$ convergence is measured using graphical distance, a notion which is topologically stronger than the uniform 
convergence criterion we consider in Theorem~\ref{theorem:final-result}
(see Remark of Theorem~\ref{theorem:bound-HD-by-MS}). Our work complements these studies by showing a convergence 
rate's dependence on sample size $m$ to be $\sqrt{1/m}$ (modulo logarithmic factors) under 
a topologically weaker notion of convergence.
}

\emph{
In summary, for the class of nonsmooth, nonconvex, stochastic convex-composite objectives we study, our results often 
provide more precise convergence rates or require fewer assumptions about the distribution  $\P$. 
Examples in Section~\ref{sec:applications} make this concrete.
}
\end{remark}

\renewcommand{\D}{\mathcal{D}}

\section{VC Dimension Bounds} 
\label{sec:VC-dimension-bounds}

In Theorem~\ref{theorem: VC-bounds-on-polynomials} below, 
we establish an upper bound on the VC dimension of the set $\F$ defined in equation~\eqref{eqn:indicator-set-class-F}
under the condition that the functions $x \mapsto c(x; \xi)$ are polynomials in $x$ for all $\xi$. Notably, our result does not require $\xi \mapsto c(x; \xi)$ to be polynomials in the data $\xi$. 
 This general bound will be particularly useful for future applications of Theorem~\ref{theorem:final-result} to concrete problems such as matrix sensing, phase retrieval, and blind deconvolution, where $x \mapsto c(x; \xi)$ naturally takes the form of polynomials in $x$. For further details on these applications, see Section~\ref{sec:applications}.

\vspace{.5cm}
\begin{theorem}
\label{theorem: VC-bounds-on-polynomials}
Consider $x \in \R^d$ and $\xi \in \Xi$, with each mapping $x \mapsto c(x; \xi)$ being a polynomial of degree 
at most $K$ for every $\xi \in \Xi$, and $K \ge 1$. The VC dimension of the set $\F$ (cf. equation~\eqref{eqn:indicator-set-class-F})
\begin{equation*}
	\F = \{\{\xi \in \Xi \colon c(x, \xi) \ge t\} \mid x \in \R^d, t\in \R\}
\end{equation*}
is bounded by $Cd\log(Kd)$, where $C > 0$ is a universal constant.  
\end{theorem}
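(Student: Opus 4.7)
The plan is to recast the shattering question as a problem of counting sign patterns of polynomials and then invoke a classical bound from real algebraic geometry (due to Oleinik--Petrovsky, Milnor, Thom, and Warren). The key observation is that the class $\F$ is parametrized by the $(d+1)$-dimensional vector $(x,t)\in\R^{d+1}$, and the defining inequality $c(x;\xi)\ge t$ rewrites as the sign condition $p_\xi(x,t)\ge 0$ with $p_\xi(x,t) := c(x;\xi)-t$, which is a polynomial of total degree at most $K$ in the \emph{parameters} $(x,t)$ (since $K\ge 1$, subtracting $t$ does not increase the degree).

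For any fixed collection of points $\xi_1,\dots,\xi_N\in\Xi$, define the polynomials $p_i(x,t):=c(x;\xi_i)-t$ on $\R^{d+1}$, each of degree at most $K$. The set $\{\xi_1,\dots,\xi_N\}$ is shattered by $\F$ precisely when every binary vector in $\{0,1\}^N$ arises as $(\mathbf{1}\{p_1(x,t)\ge 0\},\dots,\mathbf{1}\{p_N(x,t)\ge 0\})$ for some $(x,t)\in\R^{d+1}$. Consequently, shattering requires at least $2^N$ distinct sign patterns of $(p_1,\dots,p_N)$ over $\R^{d+1}$. By Warren's theorem (or the Milnor--Thom bound), the number of distinct sign patterns of $N$ polynomials of degree at most $K$ in $d+1$ real variables is bounded by $\bigl(8eKN/(d+1)\bigr)^{d+1}$ whenever $N\ge d+1$. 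Any zero values can be absorbed by a standard perturbation argument, or equivalently by using the version of the theorem stated for weak sign patterns at the cost of an absolute constant.

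Combining the two, shattering of $N$ points forces
\begin{equation*}
  2^N \;\le\; \Bigl(\tfrac{8eKN}{d+1}\Bigr)^{d+1}.
\end{equation*}
Taking logarithms gives $N\log 2 \le (d+1)\log\bigl(8eKN/(d+1)\bigr)$. A routine inversion of this inequality—substituting the ansatz $N = C d \log(Kd)$ and checking that the inequality fails for sufficiently large absolute constant $C$—yields $\VC(\F)\le C d\log(Kd)$. For the degenerate regime where $N < d+1$, the conclusion $N\le C d\log(Kd)$ is immediate for any $K,d\ge 1$ with $C$ chosen accordingly.

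The main obstacle is purely bookkeeping: (i) verifying that the Warren/Milnor--Thom bound applies despite the ``$\ge$'' being non-strict, which is handled either by the sign-pattern form of the theorem (patterns valued in $\{-1,0,+1\}^N$ with an additional polynomial factor) or by a Sard-type perturbation showing that strict and non-strict indicators yield the same VC dimension up to constants; and (ii) carrying the constants through the logarithmic manipulation in the last step cleanly so as to absorb the $\log N$ term on the right-hand side into the $\log(Kd)$ factor. Neither step requires new ideas beyond standard real algebraic geometry and elementary algebra.
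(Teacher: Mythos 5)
Your proposal is correct and follows essentially the same approach as the paper: reduce shattering to sign-pattern counting for the $(d+1)$-variate degree-$K$ polynomials $p_i(x,t)=c(x;\xi_i)-t$, invoke the Warren/Milnor--Thom bound, and solve $2^N\le(\text{poly in }KN/(d+1))^{d+1}$ for $N$. The only cosmetic difference is that the paper handles the non-strict ``$\ge$'' by using a sign-pattern notion valued in $\{-1,0,+1\}$ (so binary patterns are directly dominated by sign patterns), whereas you note a perturbation argument as an alternative; both are standard and fine.
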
 

\vspace{.2cm}
\begin{definition}
Let $p_1, p_2, \ldots, p_N$ be a sequence of functions where each function $p_i: \R^d \to \R$. 
A vector $\sigma \in \{-1, 0, +1\}^N$ is called a sign pattern of $p_1, p_2, \ldots, p_N$ if there 
exists an $x \in \R^d$ such that the sign of $p_i(x)$ is $\sigma_i$ for all $i= 1, 2, \ldots, n$, where 
$\sigma_i$ is the $i$th coordinate of $\sigma$. 
\end{definition} 

The following theorem, rooted in the field of real algebraic geometry, is referenced from 
Matousek's monograph on discrete geometry~\cite{Matousek13}. 
See~\cite{Milnor64, Thom65} and~\cite[Theorem 3]{Warren68}.

\vspace{.5cm}
\begin{theorem}[{\cite[Theorem 6.2.1]{Matousek13}}]
\label{theorem:classical-result-on-sign-patterns}
Let $p_1, p_2, \ldots, p_N$ be $d$-variate real polynomials of degree at most $K$. The number 
of sign patterns of $p_1, p_2, \ldots, p_N$ is bounded by 
\begin{equation*}
	\left( \frac{50 KN}{d}\right)^d.
\end{equation*}
\end{theorem}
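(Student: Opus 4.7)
The plan is to reduce the combinatorial problem of counting sign patterns to the topological problem of bounding connected components of a real semialgebraic set, and then invoke classical Morse-theoretic bounds from real algebraic geometry due to Oleinik-Petrovsky, Milnor, Thom, and Warren.

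First, I would establish the reduction. A strict sign vector $\sigma \in \{-1,+1\}^N$ realized by some $x \in \mathbb{R}^d$ is locally constant, so it corresponds to a unique connected component of the open set $U = \mathbb{R}^d \setminus \bigcup_{i=1}^N \{p_i = 0\}$. Hence the number of strict sign patterns is at most the number of connected components of $U$. Degenerate sign patterns---those with some $\sigma_i = 0$---can be absorbed by a standard perturbation trick: replacing each $p_i$ by the pair $p_i - \epsilon$ and $p_i + \epsilon$ for a generic small $\epsilon > 0$ yields $2N$ polynomials of the same degree whose strict sign patterns dominate all original ones, at the cost of only a constant factor absorbed into the universal constant $50$.

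The central step is to bound the number of connected components of $U$. Observe that $U = \{x : P(x) \neq 0\}$ for the product polynomial $P = \prod_{i=1}^N p_i$ of degree at most $D := KN$. The classical Milnor-Thom bound states that the sum of Betti numbers (and in particular the number of connected components) of the complement of a real algebraic hypersurface of degree $D$ in $\mathbb{R}^d$ is of order $(O(D))^d$. To obtain the sharper form $(50KN/d)^d$ with the crucial $1/d^d$ factor, I would follow Warren's argument: apply a small smooth perturbation so that a relevant level set $\{P^2 = \epsilon\} \cap \{\|x\|^2 \le R\}$ becomes a smooth compact hypersurface, then bound its connected components by the number of critical points of a generic linear function restricted to it. Bezout's theorem controls this critical-point count by a product of degrees, and an application of the inequality $\binom{a}{b} \leq (ea/b)^b$ with $a = O(KN)$ and $b = d$ collapses the product form into the claimed $(50KN/d)^d$ bound.

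The main obstacle is extracting the precise $1/d^d$ factor in the denominator rather than the weaker bound $(CKN)^d$ that a direct application of Milnor's theorem would yield. This factor is essential when $N \gg d$, as is the regime relevant for the downstream VC-dimension consequence in Theorem~\ref{theorem: VC-bounds-on-polynomials}. Obtaining it requires Warren's sharpened Bezout-based counting combined with the binomial estimate above, rather than a black-box appeal to Milnor. The universal constant $50$ is a conservative bookkeeping artifact of combining the perturbation reduction with the Bezout estimate; any fixed constant suffices for the applications in this paper, so I would not try to optimize it further.
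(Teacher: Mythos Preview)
The paper does not prove this statement at all: it is quoted verbatim as a classical result from Matousek's monograph, with supporting references to Milnor, Thom, and Warren, and is used as a black box in the subsequent proof of Theorem~\ref{theorem: VC-bounds-on-polynomials}. Your sketch is a faithful outline of the standard Warren argument that underlies the cited result, so in that sense it is correct, but there is nothing in the paper to compare it against---the authors simply invoke the theorem rather than reprove it.
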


\begin{proof}[Proof of Theorem~\ref{theorem: VC-bounds-on-polynomials}]
Define $z = (x, t) \in \R^d \times \R \cong \R^{d+1}$.
Given a dataset $\xi_1, \xi_2, \ldots, \xi_N$, let us define the functions $q_i(z)= q_i(x, t) = c(x; \xi_i) -t$
for $1\le i \le N$. Notably, each $q_i$ is a $d+1$-variate polynomial with degree at most $K$.  

Consider the set of binary patterns $b$ defined by: 
\begin{equation*}
\begin{split} 
	\mathcal{B} &= \{(b_1, b_2, \ldots, b_N) \in \{0, 1\}^N \colon \\
		&~~~~~~~~~~\text{there exists $z \in \R^{d+1}$ such that}~b_i = \one\{q_i(z) \ge 0\}~\text{for all $1\le i\le N$}\}.
\end{split} 
\end{equation*}
By definition, the number of possible binary patterns, $|\mathcal{B}|$, cannot exceed the number of sign patterns for the 
same polynomials $q_1, q_2, \ldots, q_N$. According to Theorem~\ref{theorem:classical-result-on-sign-patterns}, the number 
of sign patterns is bounded by $(50KN/(d+1))^{d+1}$. Therefore, we conclude:
\begin{equation*}
|\mathcal{B}| \le 	\left( \frac{50 KN}{d+1}\right)^{d+1}.
\end{equation*}  
Remarkably, this bound on $|\mathcal{B}|$ is valid for any choice of the data points  $\xi_1, \xi_2, \ldots, \xi_N$. To establish an 
upper limit on the VC dimension of $\F$, it remains to analyze the set of possible integers $N$ that satisfies the following inequality:
\begin{equation*}
	\left( \frac{50 KN}{d+1}\right)^{d+1} < 2^N. 
\end{equation*} 
A straightforward algebraic manipulation reveals that there exists a universal constant $C > 0$ such that setting $N > C d \log(Kd)$ 
ensures the inequality holds. The conclusion is that for any dataset $\xi_1, \xi_2, \ldots, \xi_N$ with size $N >  C d \log(Kd)$, the 
total number of binary patterns generated by $q_i(z) = c(x; \xi_i) -t$ with $x$ ranging over $\R^d$ and $t$ over $\R$ is strictly fewer 
than $2^N$.  

This proves the bound on VC dimension: $\VC(\F) \le Cd\log(Kd)$ as desired. 
\end{proof}

\section{Applications} 
\label{sec:applications}
%
%
%
In this section, we return to some of the examples previously discussed in Section~\ref{sec:a-class-of-stochastic-weakly-convex-minimizations} 
to demonstrate the application of our main Theorem~\ref{theorem:final-result}. We will concretely evaluate the 
bounds in Theorem~\ref{theorem:final-result} for some of these objectives, compare these to existing 
results in the literature, and use them to derive new findings.

\subsection{Illustration I: Phase Retrieval} 
\label{sec:phase-retrieval}
We start with the (real-valued) robust phase retrieval objective (Example~\ref{example:robust-phase-retrieval}): 
\begin{equation}
\label{eqn:phase-retrieval}
\begin{split} 
	\min_x \Phi(x)~~&\text{where}~~\Phi(x)~= \E_{(a, b) \sim \P} |\langle a, x\rangle^2 - b|. \\
	\min_x \Phi_S(x)~~&\text{where}~~\Phi_S(x)= \E_{(a, b) \sim \P_m} |\langle a, x\rangle^2 - b| = \frac{1}{m} \sum_{i=1}^m |\langle a_i, x\rangle^2- b_i|.
\end{split} 
\end{equation}
This objective has been studied in the literature~\cite{EldarMe14, DuchiRu19, DavisDrPa20}, yet tight bounds on the 
gap between the empirical and population subdifferential maps have yet to be established. 

In the above, $x \in \R^d$ denotes the unknown signal vector, $a \in \R^d$ denotes the measurement vector, $b \in \R$ denotes 
the response, and $\P_m$ refers to the empirical distribution of $(a_1, b_1), (a_2, b_2), \ldots, (a_m, b_m)$ which are i.i.d. 
sampled from the distribution $\P$. 
This objective falls into the class of stochastic convex-composite objective in 
Section~\ref{sec:explicit-uniform-convergence-rates}, as we can pick $\xi = (a, b) \in \R^d \times \R$, $\loss(z) = |z|$ and $c(x; \xi) = (a^T x)^2 - b$.
Given a radius $r > 0$, we are interested in a high probability bound onto the subdifferential gap for the 
empirical and population phase retrieval objectives over $x \in {\rm cl}(\B(0; r))$: 
\begin{equation*}
	\sup_{x: \norm{x} \le r} \H(\partial \Phi(x), \partial \Phi_S(x)).
\end{equation*}

We need to specify conditions on the distribution of measurements $(a, b) \sim \P$ that allow us to apply 
Theorem~\ref{theorem:final-result}. We recall the concept of a subgaussian tail random vector
\cite[Section 2.5]{Vershynin18}. 

\vspace{.2cm}
\begin{definition}
\label{definition:sub-gaussian}
A random vector $Z$ is said to be $\sigma$-subgaussian if for every vector $v$ with $\norm{v} = 1$: 
\begin{equation*}
	\E \left[\exp\left(\frac{\langle Z, v \rangle^2}{\sigma^2}\right)\right] \le 2. 
\end{equation*}
\end{definition}
Examples of a $\sigma$-subgaussian distribution include normal distributions $Z \sim \normal(0, (c\sigma)^2 I)$, 
and uniform distribution on the hypercube $Z \sim {\rm unif}\{\pm (c\sigma)\}^{d}$ where $c> 0$ can be any constant
obeying $c \le 1/4$. 
Applying Theorem~\ref{theorem:final-result}, we derive an upper bound on the subdifferential gap for the robust 
phase retrieval. Remarkably, our result, which achieves an almost optimal $m^{-1/2}$ rate (up to logarithmic factors), 
holds regardless of whether the random variables $a$ and $b$ are discrete or continuous. This contrasts with existing 
literature, which either requires a continuity assumption or attains a slower $m^{-1/4}$ rate (see Remark of Theorem~\ref{theorem:final-result}).


\vspace{.5cm}
\begin{corollary}
\label{corollary:phase-retrieval}
Assume the measurement vector $a$ is $\sigma$-subgaussian, and $\E[|b|] < \infty$. 

Then there exists a universal constant $C > 0$ such that for every $\delta \in (0, 1)$ and $r > 0$: 
\begin{equation}
\label{eqn:high-probability-bound-phase-retrieval}
	\P^* \left( \sup_{x: \norm{x} \le r} \H(\partial \Phi(x), \partial \Phi_S(x)) \ge C\sigma^2 r\cdot 
		\max\{\Delta_\Phi, \Delta_\Phi^2\}\right) \le \delta.
\end{equation}
In the above,  
\begin{equation*}
	\Delta_\Phi = \sqrt{\frac{1}{m} \cdot \left(d \log d\log m+ \log(\frac{1}{\delta})\right)}.
\end{equation*}
\end{corollary}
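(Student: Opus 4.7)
The plan is to specialize Theorem~\ref{theorem:final-result} to the phase retrieval setting by identifying $\xi = (a, b)$, $\loss(z) = |z|$, $c(x; \xi) = \langle a, x\rangle^2 - b$, $R \equiv 0$, $X = \R^d$, center $x_0 = 0$, and radius $r$. The first step is to compute the three quantities that feed into the final bound. Since $\loss$ is nondifferentiable only at $t_1 = 0$ with jump $\loss_+^\prime(0) - \loss_-^\prime(0) = 2$, the decomposition in Section~\ref{sec:explicit-uniform-convergence-rates} gives $\loss\sm(z) = -z$ (so $(\loss\sm)' \equiv -1$), $\loss\ns(z) = 2(z)_+$, and hence $\zeta = 3$. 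Since $x \mapsto c(x;\xi)$ is a polynomial of degree $K = 2$ in $x$, Theorem~\ref{theorem: VC-bounds-on-polynomials} yields $\VC(\F) \lesssim d\log d$. And since $\grad c(x; \xi) = 2\langle a, x\rangle a$ vanishes at $x_0 = 0$, Assumption~\ref{assumption:sub-exponential-assumption} holds with $\sigma_0 = 0$, which is exactly why the final bound scales only with $\sigma^2 r$.

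The second step is to verify the two Lipschitz-type hypotheses, Assumptions~\ref{assumption:Lipschitz-condition} and~\ref{assumption:real-Lipschitz-condition}. Since $(\loss\sm)' \equiv -1$, both relevant increments over $\B(0;r)$ are proportional to $\langle a, x_1 - x_2\rangle a$. The key tool is the standard high-dimensional-probability fact that the product of two subgaussian scalars is subexponential: applied with one factor $\langle a, v\rangle$ (which is $\sigma\norm{v}$-subgaussian) and the other $\langle a, u\rangle$ for a unit vector $u$ (which is $\sigma$-subgaussian), this shows that, as a vector, $\langle a, v\rangle a$ is $C\sigma^2\norm{v}$-subexponential. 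Hence Assumption~\ref{assumption:Lipschitz-condition} holds with scale $C\sigma^2$. Assumption~\ref{assumption:real-Lipschitz-condition} follows with $L(\xi) = 2\norm{a}^2$, which is integrable by subgaussianity of $a$. Integrability of $f$ on $\B(0; r)$ reduces to $\E\langle a, x\rangle^2 < \infty$ and $\E|b| < \infty$, both of which hold by hypothesis.

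With all assumptions verified, Theorem~\ref{theorem:final-result} produces the prefactor $(\sigma_0 + C\sigma^2 r)\zeta = O(\sigma^2 r)$ and the error
\begin{equation*}
\Delta = \sqrt{\tfrac{1}{m}\bigl(d + \VC(\F)\log m + \log(1/\delta)\bigr)} \lesssim \Delta_\Phi,
\end{equation*}
since the term $d\log d \log m$ dominates the bare $d$. This gives exactly the bound~\eqref{eqn:high-probability-bound-phase-retrieval}. I do not anticipate any substantial obstacle: Theorem~\ref{theorem:final-result} does essentially all the work, and each specialization reduces to standard facts. The only step that requires any real thought is the subexponential-product argument underlying Assumption~\ref{assumption:Lipschitz-condition}; everything else, including the observation that centering at $x_0 = 0$ annihilates the base random vectors in Assumption~\ref{assumption:sub-exponential-assumption}, is mechanical.
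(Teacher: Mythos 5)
Your argument follows the paper's proof almost verbatim: the same decomposition $\loss\sm(z) = -z$, $\loss\ns(z) = 2(z)_+$, the same observation that $\grad c(0, \xi)$ vanishes so $\sigma_0 = 0$, the same product-of-subgaussians verification of Assumption~\ref{assumption:Lipschitz-condition}, the same $L(\xi) = 2\norm{a}^2$, the same $\zeta = 3$, and the same appeal to Theorem~\ref{theorem: VC-bounds-on-polynomials} for $\VC(\F) \lesssim d\log d$.

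There is one genuine gap, though a small one. You apply Theorem~\ref{theorem:final-result} with center $x_0 = 0$ and radius $r$, which gives you a bound on $\sup_{x \in \B(0; r)} \H(\partial\Phi(x), \partial\Phi_S(x))$ -- a supremum over the \emph{open} ball $\norm{x} < r$. But the corollary asserts control over the \emph{closed} ball $\norm{x} \le r$. These are not interchangeable here: subdifferential maps of weakly convex functions are only outer semicontinuous, so $x \mapsto \H(\partial\Phi(x), \partial\Phi_S(x))$ can jump \emph{up} as $x$ approaches the boundary (take $f_1 = |x|$, $f_2 = x$ on $(0,1)$: the Hausdorff distance is $0$ on the open interval but equals $2$ at $x = 0$). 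The openness hypothesis in Theorem~\ref{theorem:bound-HD-by-MS} is exactly what makes the machinery work, and it does not spill over onto the boundary sphere. The paper's fix is to apply Theorem~\ref{theorem:final-result} to the larger ball $\B(0; 2r)$ and then observe ${\rm cl}(\B(0; r)) \subseteq \B(0; 2r)$; the radius doubling is absorbed into the universal constant $C$. You need to add this one line.
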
 

\begin{proof}
We apply Theorem~\ref{theorem:final-result}. We first check the assumptions. 
For the nonsmooth $\loss(z) = |z|$, it is decomposed 
as $\loss = \loss \ns + \loss\sm$ where $\loss\sm(z) = -z$ and $\loss\ns(z) = 2(z)_+$.
\begin{itemize}
\item (Assumption~\ref{assumption:sub-exponential-assumption}). 
	For the phase retrieval problem, 
	\begin{equation*}
		\grad c(x, \xi) = 2\langle a, x\rangle a,~~~(\loss\sm)'(c(x; \xi)) \grad c(x; \xi) = -2\langle a, x\rangle a.
	\end{equation*}
	Thus, at the center $x = 0 \in \R^d$, both gradients vanish: $\grad c(0, \xi) = (\loss\sm)'(c(0; \xi)) \grad c(0; \xi) = 0$. 
\item (Assumption~\ref{assumption:Lipschitz-condition}). It is easy to verify that 
	\begin{equation*}
	\begin{split} 
		\grad c(x_1, \xi) - \grad c(x_2, \xi) &= 2 \langle a, x_1 - x_2 \rangle a \\
		e(x_1, \xi) - e(x_2, \xi) &= 2 \langle a, x_2 - x_1 \rangle a
	\end{split},
	\end{equation*}
	where $e(x, \xi) = (\loss\sm)'(c(x; \xi)) \grad c(x; \xi)$. 
	Notably, for every vector $v$, $\langle a, v\rangle$ is $\sigma \norm{v}$-subgaussian. 
	Given the property that the product of two subgaussian random variables is 
	subexponential~\cite[Lemma 2.7.7]{Vershynin18}, we derive that for some universal constant $C > 0$, each of 
	\begin{equation*}
	\begin{split} 
		\langle \grad c(x_1, \xi) - \grad c(x_2, \xi), v\rangle = 2 \langle a, x_1 - x_2 \rangle \langle a, v\rangle \\
		e(x_1, \xi) - e(x_2, \xi) = 2 \langle a, x_2 - x_1 \rangle \langle a, v\rangle
	\end{split} 
	\end{equation*} 
	must be $C \sigma^2 \norm{x_1 - x_2}$ subexponential for every unit vector $v$ with $\norm{v} = 1$. 

\item (Assumption~\ref{assumption:real-Lipschitz-condition}). It is easy to verify that 
	\begin{equation*}
	\begin{split} 
		\norm{\grad c(x_1, \xi) - \grad c(x_2, \xi)} &= 2 \norm{\langle a, x_1 - x_2 \rangle a} \le 2 \norm{a}^2 \norm{x_1 - x_2} \\
		\norm{e(x_1, \xi) - e(x_2, \xi)} &= 2 \norm{\langle a, x_1 - x_2 \rangle a} \le 2 \norm{a}^2 \norm{x_1 - x_2}
	\end{split},
	\end{equation*}
	where $e(x, \xi) = (h\sm)'(c(x; \xi)) \grad c(x; \xi)$. 	Since $a$ is $\sigma$-subgaussian, 
	$\E[L(\xi)]  = 2\E[\norm{a}^2] \le C\sigma^2 d$ for some universal constant $C > 0$.
	Thus Assumption~\ref{assumption:real-Lipschitz-condition} is satisfied 
	with $L(\xi) = 2\norm{a}^2$.

\item (Integrability). Notably $\E[h(c(x; \xi))]  = \E[|b-\langle a, x \rangle^2|] < \infty$ for every $x \in \R^d$. 
\end{itemize}
We then compute the bound in Theorem~\ref{theorem:final-result}. Namely, we need to compute $\zeta$ and $\VC(\F)$.

For the nonsmooth function $\loss(z) = |z|$, the corresponding value of $\zeta$ is given by 
\begin{equation*}
	\zeta = 1+\loss_+^\prime(0) - \loss_-^\prime(0) = 3,
\end{equation*}
as $0$ is the only nondifferentiable point of $\loss$. The corresponding $\F$ is given by 
\begin{equation*}
	\F = \left\{ \{a \in \R^d, b \in \R \colon \langle a, x\rangle^2 - b \ge t \}\mid x \in \R^d, t \in \R \right\}.
\end{equation*}
We now bound its VC dimension using Theorem~\ref{theorem: VC-bounds-on-polynomials}. 
Note $x \mapsto \langle a, x\rangle^2 - b$ 
is a degree $2$ polynomial in $x \in \R^d$ for every $a, b$. 
Using Theorem~\ref{theorem: VC-bounds-on-polynomials}, we 
obtain that for some universal constant $C > 0$: 
\begin{equation*}
	\VC(\F) \le C d \log (d). 
\end{equation*}
Given $r > 0$, we apply Theorem~\ref{theorem: VC-bounds-on-polynomials} to the open ball $\B(0; 2r)$, and we obtain that 
for some universal constant $C > 0$: 
\begin{equation}
	\P^* \left( \sup_{x: x \in \B(0; 2r)} \H(\partial \Phi(x), \partial \Phi_S(x)) \ge C\sigma^2 r\cdot 
		\max\{\Delta_\Phi, \Delta_\Phi^2\}\right) \le \delta.
\end{equation}
Corollary~\ref{corollary:phase-retrieval} then follows by recognizing the set inclusion: $ {\rm cl}(\B(0; r)) \subseteq \B(0; 2r)$.
\end{proof} 

In the literature on optimization, statistics and machine learning, 
establishing a uniform upper bound on the convergence of subdifferentials is important because it 
facilitates our understanding 
of the landscape of the nonsmooth, nonconvex finite sample objective $\phi_S$, e.g., the location of stationary points, growth conditions, etc.,
through the lens of the population objective $\phi$.
We shall give further applications in Section~\ref{sec:noiseless-phase-retrieval} 
how Corollary~\ref{corollary:phase-retrieval} contributes to existing results to refine
our understanding of the landscape of the nonsmooth phase retrieval objective $\Phi_S$ in finite samples. 

Notably, characterization of the nonsmooth landscape of empirical risk function is an important and 
profound topic, and Section~\ref{sec:noiseless-phase-retrieval} presents only a very basic example of how to apply our general theorem.
Further results on this topic will be detailed in an upcoming manuscript.

\subsubsection{Noiseless Phase Retrieval}
\label{sec:noiseless-phase-retrieval}
We consider the noiseless phase retrieval, where the measurement $b$ obeys: 
\begin{equation*}
	b = \langle a, \bar{x}\rangle^2
\end{equation*}
for some fixed vector $\bar{x} \in \R^d$. In this case, the objectives $\Phi_S$ and $\Phi$ become: 
\begin{equation*}
	\Phi_S(x) =  \frac{1}{m} \sum_{i=1}^m |\langle a_i, x\rangle^2- \langle a_i, \bar{x}\rangle^2|,
	~~~
	\Phi(x) =  \E_{a \sim \P}[ |\langle a, x\rangle^2- \langle a, \bar{x}\rangle^2|].
\end{equation*}
Let us denote the set of stationary points of $\Phi_S$ and $\Phi$ to be: 
\begin{equation*}
	\mathcal{Z}_S = \{x \in \R^d \colon 0 \in \partial \Phi_S(x)\}~~~\text{and}~~~\mathcal{Z} = \{x \in \R^d \colon 0 \in \partial \Phi(x)\}.
\end{equation*}

For standard normal measurement vectors $a \sim \normal(0, I)$, a recent important and interesting study first characterizes $\mathcal{Z}$, 
the locations of stationary points of the population objective~$\Phi(x)$~\cite{DavisDrPa20}. Remarkably, the authors further
develop a quantitative version of Attouch's epi-convergence theorem to show that the stationary points 
of the finite samples objective $\Phi_S$ converge to those of the population objective $\Phi$ at a rate of $\sqrt[4]{d/m}$~\cite{DavisDrPa20}.
More precisely, \cite[Theorem 5.2, Corollary 6.3]{DavisDrPa20} prove that, there exist numerical constants $c, C > 0$
such that when $m \ge Cd$, with an inner probability at least $1-C\exp(-cd)$ (recall the definition 
of inner probability from Section~\ref{sec:probability-measure-and-measurability}): 
\begin{equation*}
	\mathbb{D}(\mathcal{Z}_S, \mathcal{Z}) \le C\sqrt[4]{\frac{d}{m}} \norm{\bar{x}}.
\end{equation*}
In words, with high probability, for any stationary point of $\Phi_S$, there is a stationary point of $\Phi$ that is 
at most a distance of $C\sqrt[4]{d/m} \norm{\bar{x}}$ in the $\ell_2$ norm away from it. It is important to note that 
the slow rate in $m$, namely $1/\sqrt[4]{m}$, appears intrinsic to the approach based on the
quantitative version of Attouch's epi-convergence theorem~\cite[Section 5]{DavisDr22}.

In this paper, we show a stronger result for large sample size $m$: the stationary points of the finite samples objective $\Phi_S$ converge to 
those of the population objective $\Phi$ at a rate of $\sqrt{d/m}$ up to logarithmic factors in $d, m$. This rate improvement in $m$
is due to our bound on uniform convergence of subdifferentials of $\Phi$ and $\Phi_S$ 
in Corollary~\ref{corollary:phase-retrieval}, and our proof of Proposition~\ref{theorem:tight-nonsmooth-phase-retrieval-landscape} 
benefits from the characterization of approximate stationary 
points of the population objective $\Phi$~\cite[Corollary 5.3]{DavisDrPa20}. Nonetheless, our result, 
Proposition~\ref{theorem:tight-nonsmooth-phase-retrieval-landscape}, comes with 
a tradeoff of additional logarithmic factors in $d$ and $m$.

\vspace{.5cm}
\begin{proposition}
\label{theorem:tight-nonsmooth-phase-retrieval-landscape}
Assume $a \sim \normal(0, I)$. Then 
there exist numerical constants $c, C > 0$ such that when $m \ge Cd$, with an inner probability at least $1-C\exp(-cd)$: 
\begin{equation*}
	\mathbb{D}(\mathcal{Z}_S, \mathcal{Z})  \le C\left(\sqrt{\frac{d}{m}\log (d) \log (m)} + \frac{d}{m} \log (d) \log(m)\right)\norm{\bar{x}}.
\end{equation*}
\end{proposition}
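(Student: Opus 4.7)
The plan is to combine the uniform Hausdorff bound on subdifferentials from Corollary~\ref{corollary:phase-retrieval} with the quantitative characterization of approximate stationary points of the population objective $\Phi$ established in~\cite[Corollary 5.3]{DavisDrPa20}. The first step is to confine $\mathcal{Z}_S$ to a Euclidean ball: I would fix a universal constant $C_0$ and argue that, with inner probability at least $1-C\exp(-cd)$, every $x_S \in \mathcal{Z}_S$ satisfies $\norm{x_S} < C_0 \norm{\bar{x}}$. The mechanism is coercivity of $\Phi_S$: for $\norm{x} \gg \norm{\bar{x}}$, every element of $\partial \Phi_S(x)$ has norm at least $c \norm{x}$, because the dominant subgradient term is $\frac{1}{m}\sum_{i=1}^m 2\langle a_i, x\rangle a_i$ and standard Gaussian covariance concentration gives $\opnorm{\frac{1}{m}\sum_{i=1}^m a_i a_i^T - I} \le 1/2$ whenever $m \ge Cd$. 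This yields a definite lower bound on $\norm{\partial \Phi_S(x)}$ outside $\B(0; C_0 \norm{\bar{x}})$, so no stationary points can live there.

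Next, I would apply Corollary~\ref{corollary:phase-retrieval} with $r = C_0 \norm{\bar{x}}$, $\sigma$ a universal constant (since $a \sim \normal(0, I)$ is $O(1)$-subgaussian), and confidence level $\delta = \exp(-d)$, absorbing $\log(1/\delta)$ into the $d$ term inside $\Delta_\Phi$. On the resulting high-probability event,
\begin{equation*}
\sup_{x \in \B(0; C_0 \norm{\bar{x}})} \H(\partial \Phi(x), \partial \Phi_S(x)) \le C \norm{\bar{x}} \max\{\Delta_\Phi, \Delta_\Phi^2\} \eqdef \varepsilon,
\end{equation*}
with $\Delta_\Phi$ scaling as $\sqrt{d \log(d) \log(m)/m}$ after absorbing $\log(1/\delta)$ into $d$. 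For any $x_S \in \mathcal{Z}_S$, combining $\norm{x_S} < C_0 \norm{\bar{x}}$ with $0 \in \partial \Phi_S(x_S)$ and the Hausdorff bound gives $\dist(0, \partial \Phi(x_S)) \le \varepsilon$; that is, $x_S$ is an $\varepsilon$-approximate stationary point of $\Phi$.

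Finally, I would invoke~\cite[Corollary 5.3]{DavisDrPa20}, which for Gaussian-measurement noiseless phase retrieval translates $\varepsilon$-approximate stationarity of $\Phi$ into an $O(\varepsilon)$ bound on distance to $\mathcal{Z}$. Taking the supremum over $x_S \in \mathcal{Z}_S$ and intersecting the two high-probability events yields the claimed bound on $\D(\mathcal{Z}_S, \mathcal{Z})$. The hardest piece is the \emph{a priori} confinement of $\mathcal{Z}_S$ to $\B(0; C_0 \norm{\bar{x}})$: the uniform subdifferential machinery of Corollary~\ref{corollary:phase-retrieval} only operates on a fixed bounded domain and therefore cannot by itself preclude empirical stationary points far from the origin, so this boundedness must be handled separately, most cleanly via direct concentration for the empirical quadratic form $\frac{1}{m}\sum_{i=1}^m a_i a_i^T$.
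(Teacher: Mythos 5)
Your high-level strategy matches the paper's: (1) confine $\mathcal{Z}_S$ to a ball of radius $O(\norm{\bar{x}})$, (2) invoke a uniform subdifferential bound from Corollary~\ref{corollary:phase-retrieval}, (3) push the resulting approximate stationarity of $\Phi$ through the Davis--Drusvyatskiy--Paquette characterization to get distance to $\mathcal{Z}$. Two issues merit attention.

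First, and this is the genuine gap: the result you invoke from~\cite{DavisDrPa20} (Theorem 5.3, reproduced in the paper as Proposition~\ref{theorem:davis-dr-pa20}) does not convert \emph{absolute} $\varepsilon$-approximate stationarity into an $O(\varepsilon)$ distance bound. Its hypothesis is \emph{scale-relative}: it requires $\dist(0,\partial\Phi(x)) \le \gamma\norm{x}$, and only under that condition does it conclude $\dist(x,\mathcal{Z}) \le C\,\dist(0,\partial\Phi(x))$. Your $\varepsilon$ from the fixed-radius Corollary~\ref{corollary:phase-retrieval} scales as $C\norm{\bar{x}}\max\{\Delta_0,\Delta_0^2\}$, which is not $\le\gamma\norm{x_S}$ when $x_S$ is close to the origin. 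So for empirical stationary points with $\norm{x_S} \lesssim \norm{\bar{x}}\max\{\Delta_0,\Delta_0^2\}$, the hypothesis of Proposition~\ref{theorem:davis-dr-pa20} cannot be verified from your bound, and the last step of your proof does not go through as written. The paper resolves this with a peeling refinement (Corollary~\ref{corollary:phase-retrieval-peeling}) that gives $\H(\partial\Phi(x),\partial\Phi_S(x)) \le C\norm{x}\max\{\Delta_0,\Delta_0^2\}$ for $\norm{x}$ bounded below by $\sqrt{d/m}\norm{\bar x}$, plus a separate easy case for small $\norm{x_S}$: since $0\in\mathcal{Z}$ (Proposition~\ref{theorem:davis-dr-pa20-exact}), $\dist(x_S,\mathcal{Z})\le\norm{x_S}$ is already of the claimed order. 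You need one of these two fixes.

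Second, a smaller point on the confinement step. Your plan there is a genuinely different, more self-contained route than the paper's, which simply cites the existing~\cite[Corollary 6.3]{DavisDrPa20} (Proposition~\ref{proposition:stationary-points-empirical-phase-retrieval}) to obtain boundedness. Your coercivity argument is reasonable in spirit, but as stated it silently treats the subgradient of $\Phi_S$ as $\frac{1}{m}\sum_i 2\langle a_i,x\rangle a_i$; the actual subgradient carries the factor $\sign(\langle a_i,x\rangle^2 - b_i)$, and you must argue (as you can, for $\norm{x}\gg\norm{\bar x}$, but uniformly in $x$) that enough signs are $+1$ for the covariance-concentration lower bound to survive. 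Citing the known corollary is cleaner here, though your direct argument is a plausible alternative if completed carefully.
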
 

The formal proof of Proposition~\ref{theorem:tight-nonsmooth-phase-retrieval-landscape} is deferred to Appendix, 
Section~\ref{sec:theorem-tight-nonsmooth-phase-retrieval-landscape}.

\subsection{Illustration II: Blind Deconvolution} 
\label{sec:blind-deconvolution}
Our second example studies the (real-valued) blind deconvolution objective: 
\begin{equation*}
\begin{split}
	\min_{y, w} \tilde{\Phi}(y, w)~~&\text{where}~~\tilde{\Phi}(y, w) = \E_{(u, v, b) \sim \P}|\langle u, y \rangle \langle v, w \rangle - b|. \\
	\min_{y, w} \tilde{\Phi}_S(y, w)~~&\text{where}~~\tilde{\Phi}(y, w) = \E_{(u, v, b) \sim \P_m}
		|\langle u, y\rangle \langle v, w \rangle - b| = \frac{1}{m} \sum_{i=1}^m |\langle u_i, y\rangle \langle v_i, w\rangle-b_i|. 
\end{split} 
\end{equation*} 
This objective proposed in the literature~\cite{CharisopoulosDaDiDr21, Diaz19}
falls into the stochastic convex-composite minimization problems in 
Section~\ref{sec:explicit-uniform-convergence-rates}, as we can simply set $x = (y, w) \in \R^{d_1} \times \R^{d_2}$, 
$\xi = (u, v, b) \in \R^{d_1} \times \R^{d_2} \times \R$, $\loss(z) = |z|$ and $c(x; \xi) = \langle u, y\rangle \langle v, w\rangle - b$.

Another application of Theorem~\ref{theorem:final-result} yields an upper bound on the subdifferential gap for the robust blind deconvolution objective.

\vspace{.2cm} 
\begin{corollary}
\label{corollary:blind-deconvolution}
Assume the measurements $u, v$ are independent $\sigma$-subgaussian vectors, and $\E[|b|] < \infty$. 

Then there exists a universal constant $C > 0$ such that for every $\delta \in (0, 1)$ and $r > 0$: 
\begin{equation}
\label{eqn:high-probability-bound-phase-retrieval}
	\P^* \left( \sup_{x: \norm{x} \le r} \H(\partial \Phi(x), \partial \Phi_S(x)) \ge C\sigma^2 r\cdot 
		\max\{\Delta_{\tilde{\Phi}}, \Delta_{\tilde{\Phi}}^2\}\right) \le \delta.
\end{equation}
In the above,  
\begin{equation*}
	\Delta_{\tilde{\Phi}} = \sqrt{\frac{1}{m} \cdot \left((d_1+d_2) \log (d_1+d_2)\log m+ \log(\frac{1}{\delta})\right)}.
\end{equation*}
\end{corollary}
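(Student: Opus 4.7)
The plan is to mimic the proof of Corollary~\ref{corollary:phase-retrieval} by applying Theorem~\ref{theorem:final-result} to the blind deconvolution objective, viewed as a stochastic convex-composite problem with $x = (y,w)$, $\xi = (u,v,b)$, $\loss(z) = |z|$, and $c(x;\xi) = \langle u,y\rangle\langle v,w\rangle - b$. The decomposition $\loss = \loss\sm + \loss\ns$ with $\loss\sm(z) = -z$ and $\loss\ns(z) = 2(z)_+$ immediately gives $\zeta = 1 + (\loss_+^\prime(0) - \loss_-^\prime(0)) = 3$ and a single nondifferentiable point. So after checking Assumptions C.1--C.3 and the VC bound, the rate will drop out from Theorem~\ref{theorem:final-result}.

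First, I would verify the subexponential and Lipschitz conditions at the natural center $x_0 = 0$. The gradient is $\nabla c(x;\xi) = (\langle v,w\rangle u,\, \langle u,y\rangle v)$ and $(\loss\sm)'(c(x;\xi))\nabla c(x;\xi) = -\nabla c(x;\xi)$, both of which vanish at $x_0 = 0$, trivially confirming Assumption~\ref{assumption:sub-exponential-assumption}. For Assumption~\ref{assumption:Lipschitz-condition}, the increment decomposes as
\begin{equation*}
\nabla c(x_1;\xi) - \nabla c(x_2;\xi) = \bigl(\langle v, w_1 - w_2\rangle u,\ \langle u, y_1 - y_2\rangle v\bigr),
\end{equation*}
so for any unit vector $(\alpha,\beta) \in \R^{d_1}\times\R^{d_2}$ its inner product with this increment is a sum of products of two subgaussians. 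Since the product of subgaussians is subexponential~\cite[Lemma 2.7.7]{Vershynin18}, each term is $C\sigma^2(\|w_1-w_2\| + \|y_1-y_2\|)$-subexponential, giving the required $C\sigma^2\|x_1-x_2\|$ control; the same argument applies to $e(x_1,\xi) - e(x_2,\xi)$. For Assumption~\ref{assumption:real-Lipschitz-condition}, one bounds $\|\nabla c(x_1;\xi) - \nabla c(x_2;\xi)\| \le (\|u\|^2 + \|v\|^2)\|x_1 - x_2\|$ so that $L(\xi) = \|u\|^2 + \|v\|^2$, which is integrable with $\E[L(\xi)] \le C\sigma^2(d_1+d_2)$ by subgaussianity. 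Integrability of $\E[\loss(c(x;\xi))]$ follows from the triangle inequality together with $\E[|b|] < \infty$ and $\E[|\langle u,y\rangle\langle v,w\rangle|] \le \E[\|u\|\|v\|]\|y\|\|w\| < \infty$ using independence of $u$ and $v$.

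The remaining ingredient is the VC dimension of
\begin{equation*}
\F = \bigl\{\{(u,v,b) : \langle u,y\rangle\langle v,w\rangle - b \ge t\} : (y,w)\in\R^{d_1+d_2},\, t\in\R\bigr\}.
\end{equation*}
Since $x \mapsto c(x;\xi)$ is a polynomial in $x = (y,w) \in \R^{d_1+d_2}$ of degree $2$ for every fixed $\xi$, Theorem~\ref{theorem: VC-bounds-on-polynomials} (applied with $d$ replaced by $d_1+d_2$ and $K = 2$) yields $\VC(\F) \le C(d_1+d_2)\log(d_1+d_2)$ for a universal constant $C > 0$.

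Assembling these pieces, I would invoke Theorem~\ref{theorem:final-result} on the ball $\B(0; 2r)$ with $\sigma_0 = 0$ (gradients vanish at $0$), effective sub-exponential parameter $C\sigma^2$, and $\zeta = 3$; the definition of $\Delta$ in Theorem~\ref{theorem:basic-statistical-learning-result} then becomes $\Delta_{\tilde\Phi}$ up to absorbing constants into $C$. The inclusion $\{x : \|x\|\le r\} \subseteq \B(0;2r)$ extends the bound from the open ball to the closed one, finishing the proof. I do not anticipate a genuine obstacle here: the main step that requires care is the product-of-subgaussians argument for Assumption~\ref{assumption:Lipschitz-condition}, since both coordinates $u$ and $v$ enter the increment multiplicatively and one must keep the Orlicz-type constants compatible with Definition~\ref{definition:sub-exponential}; independence of $u$ and $v$ is convenient but not strictly required once one appeals to the generic Orlicz product bound.
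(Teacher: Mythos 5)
Your proposal is correct and follows essentially the same route as the paper: verify Assumptions~\ref{assumption:sub-exponential-assumption}--\ref{assumption:real-Lipschitz-condition} at the center $x_0=0$ (where all gradients vanish), use the product-of-subgaussians-is-subexponential bound for the increments, bound the VC dimension via Theorem~\ref{theorem: VC-bounds-on-polynomials} with a degree-$2$ polynomial in $(y,w)\in\R^{d_1+d_2}$, and invoke Theorem~\ref{theorem:final-result} with $\zeta=3$. The only cosmetic difference is your choice of Lipschitz envelope $L(\xi)=\|u\|^2+\|v\|^2$ versus the paper's $L(\xi)=2\|u\|\|v\|$; both are integrable and the argument goes through identically.
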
 

We wish to comment that, the recent work~\cite{Diaz19} has studied the location of the stationary points 
of the finite sample objective $\tilde{\Phi}_S$ for the noiseless blind deconolution problem, assuming independent measurements 
$u \sim \normal(0, I_{d_1})$, and $v \sim \normal(0, I_{d_2})$, and no corruptions in the measurement outcome 
$b = \langle u, \bar{y}\rangle \langle v, \bar{w}\rangle$. Their analysis employs a version of Attouch's epi-convergence theorem, similar to its use in the noiseless phase retrieval problem~\cite{DavisDrPa20}, to identify the location of finite sample stationary points. In Section~\ref{sec:noiseless-phase-retrieval}, we demonstrated that our approach achieves tighter convergence rates in the context of phase retrieval. Although we expect that similar improvements can be achieved for blind deconvolution, a full analysis remains to be explored and is beyond the scope of the current paper.

\subsection{Illustration III: Matrix Sensing}
\label{sec:matrix-sensing}
Our final example demonstrates the flexibility of our framework. Consider the robust matrix sensing 
objective~\cite{LiZhMaVi20, CharisopoulosChDaDiDiDr21, DingJiChQuZh21}:
\begin{equation}
\label{eqn:matrix-sensing}
\begin{split} 
	\min_X \bar{\Phi}(X)~~&\text{where}~~\bar{\Phi}(X)~= \E_{(a, b) \sim \P} |\langle A, XX^T\rangle - b|. \\
	\min_X \bar{\Phi}_S(X)~~&\text{where}~~\bar{\Phi}_S(X)= \E_{(a, b) \sim \P_m} |\langle A, XX^T\rangle - b| = 
		\frac{1}{m} \sum_{i=1}^m |\langle A_i, XX^T\rangle - b_i|.
\end{split} 
\end{equation}
Here, $A \in \R^{D \times D}$ represents the measurement matrices, $b \in \R$ the measurements, and 
$X \in \R^{D \times r_0}$ a low-rank matrix, typically with $r_0$ significantly smaller than $D$ in applications.
This objective falls into the stochastic convex-composite objectives in 
Section~\ref{sec:explicit-uniform-convergence-rates}, as we can simply set $x = X \in \R^{D \times r_0}$, 
$\xi = (A, b) \in \R^{D \times D} \times \R$, $\loss(z) = |z|$ and $c(x; \xi) = c(x; (A,b)) = \langle A, XX^T \rangle - b$.

Another application of Theorem~\ref{theorem:final-result} yields an upper bound on the subdifferential gap for the robust matrix 
sensing objective. Recall a matrix $A \in \R^{D \times D}$ is a $\sigma$-subgaussian random matrix if the vectorized matrix 
$\textbf{{\rm vec}}(A) \in \R^{D \times D}$ is a $\sigma$-subgaussian vector in the sense of Definition~\ref{definition:sub-gaussian}~\cite{Vershynin18}.
Below, we use $\norm{\cdot}_F$ to denote the Frobenius norm of a matrix in $\R^{D\times D}$. 

\vspace{.5cm}
\begin{corollary}
\label{corollary:matrix-sensing}
Assume the measurement matrix $A \in \R^{D \times D}$ is $\sigma$-subgaussian, and $\E[|b|] < \infty$. 

Then there exists a universal constant $C > 0$ such that for every $\delta \in (0, 1)$ and $r > 0$: 
\begin{equation}
\label{eqn:high-probability-bound-matrix-sensing}
	\P^* \left( \sup_{X: \norm{X}_F \le r} \H(\partial \bar{\Phi}(X), \partial \bar{\Phi}_S(X)) \ge C\sigma r\cdot 
		\max\{\Delta_{\bar{\Phi}}, \Delta_{\bar{\Phi}}^2\}\right) \le \delta.
\end{equation}
In the above,  
\begin{equation*}
	\Delta_{\bar{\Phi}} = \sqrt{\frac{1}{m} \cdot \left(Dr_0 \log (Dr_0)\log m+ \log(\frac{1}{\delta})\right)}.
\end{equation*}
\end{corollary}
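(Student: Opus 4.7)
The plan is to apply Theorem~\ref{theorem:final-result} on the open ball $\openset = \B(0; 2r) \subseteq \R^{Dr_0}$, where we identify $X \in \R^{D \times r_0}$ with its vectorization so that the Frobenius norm becomes the Euclidean norm, and then invoke the inclusion ${\rm cl}(\B(0; r)) \subseteq \B(0; 2r)$ to conclude~\eqref{eqn:high-probability-bound-matrix-sensing}. Structurally, the verification closely mirrors the proof of Corollary~\ref{corollary:phase-retrieval}. The key structural difference is that here $\grad_X c(X; (A,b)) = (A+A^T)X$ is \emph{linear} in the random matrix $A$, whereas in phase retrieval the gradient $2\langle a, x\rangle a$ is quadratic in $a$. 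This is precisely why the final bound carries $\sigma r$ rather than the $\sigma^2 r$ appearing in Corollary~\ref{corollary:phase-retrieval}.

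First I would decompose $\loss(z) = |z|$ into $\loss\sm(z) = -z$ and $\loss\ns(z) = 2(z)_+$, so that $\zeta = 1 + (\loss_+'(0) - \loss_-'(0)) = 3$. At the center $X = 0$ one has $\grad c(0; (A,b)) = 0$ and $(\loss\sm)'(c(0;(A,b)))\grad c(0;(A,b)) = 0$, so Assumption~\ref{assumption:sub-exponential-assumption} holds trivially with $\sigma_0 = 0$. The main computation concerns Assumption~\ref{assumption:Lipschitz-condition}: the gradient increment is
\begin{equation*}
	\grad c(X_1; (A,b)) - \grad c(X_2; (A,b)) = (A+A^T)(X_1 - X_2),
\end{equation*}
with an analogous expression (up to sign) for $e(X; \xi) = (\loss\sm)'(c(X;\xi))\grad c(X;\xi)$. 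For any $V$ with $\norm{V}_F = 1$, I would expand
\begin{equation*}
	\langle (A+A^T)(X_1-X_2), V\rangle = \langle A, (X_1-X_2)V^T\rangle + \langle A, V(X_1-X_2)^T\rangle,
\end{equation*}
and invoke the $\sigma$-subgaussianity of $A$ together with the elementary inequality $\norm{MN^T}_F \le \norm{M}_F \norm{N}_F$ to conclude that each summand is $\sigma\norm{X_1-X_2}_F$-subgaussian. The sum is then $2\sigma\norm{X_1-X_2}_F$-subgaussian, hence $C\sigma\norm{X_1-X_2}_F$-subexponential. Assumption~\ref{assumption:real-Lipschitz-condition} follows from $\norm{(A+A^T)(X_1-X_2)}_F \le 2\norm{A}_{\rm op}\norm{X_1-X_2}_F$ combined with the standard bound $\E[\norm{A}_{\rm op}] \le C\sigma\sqrt{D} < \infty$ for $\sigma$-subgaussian $A$, and integrability of $\loss(c(X;\xi))$ is immediate from $\E[|\langle A, XX^T\rangle|] < \infty$ and $\E[|b|] < \infty$.

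It remains to bound $\VC(\F)$ and to assemble the ingredients. Viewing $X$ as a vector in $\R^{Dr_0}$, the map $X \mapsto c(X; (A,b)) = \langle A, XX^T\rangle - b$ is a polynomial of degree at most $2$ in $X$ for every $(A,b)$. Applying Theorem~\ref{theorem: VC-bounds-on-polynomials} with ambient dimension $Dr_0$ and degree $K = 2$ then yields $\VC(\F) \le CDr_0\log(Dr_0)$. Substituting $\sigma_0 = 0$, $\zeta = 3$, $d = Dr_0$, and this VC bound into the conclusion of Theorem~\ref{theorem:final-result} applied on $\B(0; 2r)$ produces~\eqref{eqn:high-probability-bound-matrix-sensing} after absorbing the constants into $C$. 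The only mildly delicate step is the subgaussian expansion in Assumption~\ref{assumption:Lipschitz-condition}; once one rewrites the linear functional against $V$ as the sum of two Frobenius inner products with $A$, the computation reduces to routine subgaussian calculus, and no essentially new idea beyond Corollary~\ref{corollary:phase-retrieval} is required.
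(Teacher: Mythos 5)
Your proposal is correct and follows essentially the same route as the paper's proof: decompose $\loss=\loss\sm+\loss\ns$ with $\zeta=3$, verify that the gradients vanish at $X=0$, verify the subexponential increment condition by pairing the $\sigma$-subgaussianity of $\mathrm{vec}(A)$ with a Frobenius-norm Cauchy--Schwarz bound, verify the integrable-Lipschitz condition, bound $\VC(\F)$ via Theorem~\ref{theorem: VC-bounds-on-polynomials} with degree $K=2$ and ambient dimension $Dr_0$, and substitute into Theorem~\ref{theorem:final-result}. The only cosmetic differences are that you split $\langle(A+A^T)(X_1-X_2),V\rangle$ into the two inner products $\langle A,\,V(X_1-X_2)^T\rangle$ and $\langle A,\,(X_1-X_2)V^T\rangle$ rather than first observing $\langle A+A^T,\cdot\rangle$ is subgaussian as in the paper, and you control $\E[\norm{A}_{\rm op}]$ rather than $\E[\norm{A}_F]$ for Assumption~\ref{assumption:real-Lipschitz-condition}; both choices are equivalent for the argument.
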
 

The proof is provided in Section~\ref{sec:proof-corollary-matrix-sensing} and is analogous to the phase retrieval case. 
The primary challenge, which comes from evaluating the VC dimension, is addressed using 
Theorem~\ref{theorem: VC-bounds-on-polynomials}.


\section{Discussion}
\label{sec:discussions}
This paper presents a general technique for proving subdifferential convergence in weakly convex functions through the 
establishment of subgradient convergence (Theorem~\ref{theorem:bound-HD-by-MS}).  We demonstrate this approach 
to stochastic convex-composite minimization problems---an important class of weakly convex objectives---and derive concrete 
convergence rates for subdifferentials of empirical objectives
using tools from statistical learning theories (Theorem~\ref{theorem:final-result}). Our results achieve optimal rates
with tight dependence on the sample size and dimensionality (up to logarithmic factors) and do not rely on key distributional assumptions
in the literature requiring the population objectives to be continuously differentiable.
In Section~\ref{sec:applications}, we demonstrate how our results lead to complementary understanding of subdifferential convergence 
and its uses for landscape analysis, in more specific applications such as robust phase retrieval and matrix sensing problems.
 
There are many interesting future directions to extend the scope of the current work. Notable examples include: 
\begin{itemize}
\item Can Theorem~\ref{theorem:final-result} be extended to other weakly-convex models, such as max-of-smooth 
	functions~\cite{Rockafellar81, HanLe23}? Extending this theorem could provide tools for solving other weakly-convex optimization problems.
\item Can Theorem~\ref{theorem:bound-HD-by-MS} be extended to other function classes? An intriguing candidate is the class of difference-of-convex functions~\cite{LiuCuPa22}. 
\item Theorem~\ref{theorem:final-result} could facilitate our understanding of the landscape of a certain class of 
	nonsmooth empirical risk minimization problems. Exploring its statistical applications, such as nonlinear quantile 
	regressions~\cite{KoenkerPa96, Koenker05}, may lead to new insights into data-driven models.
\end{itemize}

\bmhead{Acknowledgements}
Feng Ruan extends his gratitude to his friends and advisor---John Duchi, Qiyang Han, 
X.Y. Han, Kate Yao and Yichen Zhang---for their support and patience as 
he tackled this once seemingly challenging problem. 
A shift in perspective during his time at Northwestern led him to this solution.
He is also grateful to Damek Davis and Dmitriy Drusvyatskiy for their valuable 
feedback on the initial manuscript draft posted on arXiv. Feng Ruan would also like to express gratitude to 
Johannes Royset, Alexander Shapiro and Lai Tian, and the anonymous reviewers for their valuable discussions on literature 
and insights regarding the proof of Theorem~1 using variational analytic tools and also its limitations.

\newpage
\begin{appendices}

%

\section{Proof of Theorem~\ref{theorem:basic-statistical-learning-result}}
\label{sec:proof-theorem-basic-statistical-learning-result}

Our proof starts with a basic triangle inequality bound: 
\begin{equation}
\label{eqn:basic-inequality-bounds-on-G}
	\sup_{x \in \B(x_0; r)} \norm{G_S(x) - G(x)} \le \Big( \sum_{j=1}^\infty a_j \Big) \cdot \Gamma_1  + \Gamma_2.
\end{equation} 
where the two quantities $\Gamma_1, \Gamma_2$ are defined by: 
\begin{equation*}
\begin{split} 
	\Gamma_1&= \sup_{x \in \B(x_0; r), t \in \R} \norm{\E_{\xi \sim \P_m}[\one\{c(x; \xi)  \ge t\} \cdot \grad c(x; \xi)] - 
		\E_{\xi \sim \P}[\one\{c(x; \xi)  \ge t\} \cdot \grad c(x; \xi)]}   \\
	\Gamma_2 &= \sup_{x \in \B(x_0; r)}~~~~ \norm{\E_{\xi \sim \P_m}[(\loss\sm)^\prime(c(x; \xi)) \cdot \grad c(x; \xi)] - 
		\E_{\xi \sim \P}[(\loss\sm)^\prime(c(x; \xi)) \cdot \grad c(x; \xi)]}
\end{split}.
\end{equation*} 
We will deduce high probability upper bounds on $\Gamma_1$ and $\Gamma_2$ using Theorem
\ref{theorem:uniform-convergence-theorem}. The proof of Theorem~\ref{theorem:uniform-convergence-theorem}
is technical and lengthy, and deferred to Section~\ref{sec:proof-theorem-uniform-convergence-theorem}.

\vspace{.5cm}
\newcommand{\Z}{\mathcal{Z}}
\newcommand{\W}{\mathcal{W}}
\begin{theorem}
\label{theorem:uniform-convergence-theorem}
Consider two subsets of $\R^d$, $\Z_1$ and $\Z_2$, where $\Z_2$ is a Euclidean ball centered at $z_{2, 0}$ with radius $r$.
Let $q \colon \mathcal{Z}_1 \times \Xi \to \R$ and $w\colon \mathcal{Z}_2 \times \Xi \to \R^d$. 
Suppose $\xi_1, \xi_2, ..., \xi_m$ are i.i.d. samples from a distribution $\P$ with support $\Xi$. For any $z_1 \in \Z_1$ and $z_2 \in \Z_2$, we 
define:
\begin{equation*}
\begin{split} 
	K_m(z_1, z_2, t) &= \frac{1}{m} \sum_{i=1}^m \one \left\{q(z_1, \xi_i) \ge t\right\} w(z_2, \xi_i),~~~~\\
	K(z_1, z_2, t) &=  \E_{\xi \sim \P}\bigg[\one \left\{q(z_1, \xi) \ge t\right\} w(z_2, \xi)\bigg]
\end{split}.
\end{equation*}
 Assume the following: 
\begin{enumerate}[label=(\alph*)]
\item The random variable $w(z_{2, 0}, \xi)$ is sub-exponential with parameter $\sigma_0^2$. 
\item For every $z_2, z_2' \in \mathcal{Z}_2$, the random vector $w(z_2, \xi) - w(z_2', \xi)$ is $\sigma \norm{z_2- z_2'}$
	subexponential. 
\item The random functions $z_2 \mapsto w(z_2, \xi)$ and $z_1 \mapsto q(z_1, \xi)$ are continuous. 
\end{enumerate} 
Under these conditions, there exists a universal constant $C > 0$ such that for every $\delta \in (0, 1)$: 
\begin{equation}
\label{eqn:crazy-ept}
	\P^* \left(\sup_{z_1 \in \Z_1, z_2 \in \Z_2, t\in \R} \norm{K_m(z_1, z_2, t) - K(z_1, z_2, t)} \ge C(\sigma r + \sigma_0)\cdot 
		\max\{\Delta_{\W}, \Delta_{\W}^2\}\right) \le \delta
\end{equation}
where 
\begin{equation*}
	\Delta_{\W} =  \sqrt{\frac{1}{m} \cdot \Big(d + \VC(\W)\log(m) + \log (\frac{1}{\delta}) \Big) }.
\end{equation*} 
Here, $\W:= \left\{\{\xi \in \Xi: q(z_1, \xi) \ge t\} \mid z_1 \in \Z_1, t\in \R\right\}$, and 
$\VC(\W)$ denotes its VC dimension.
\end{theorem}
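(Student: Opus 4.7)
The plan is to combine three standard ingredients from empirical-process theory: (i) the Sauer-Shelah lemma to handle the indicator class; (ii) an $\eta$-net covering of the Euclidean ball $\mathcal{Z}_2 \subset \R^d$, and of the unit sphere $\S^{d-1}$ used to pass from vector norms to scalar inner products; and (iii) Bernstein's inequality for sub-exponential random variables (equation~\eqref{eqn:sub-exponential-immediate}). The key observation is that for each realization of $(\xi_1,\ldots,\xi_m)$, the indicator vector $(\one\{q(z_1,\xi_i)\ge t\})_{i=1}^m$ takes at most $(em/\VC(\W))^{\VC(\W)}$ distinct values as $(z_1,t)$ ranges over $\mathcal{Z}_1\times\R$, so the supremum over $(z_1,t)$ is sample-wise a maximum over a finite set.

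First I would decompose
\[
K_m(z_1,z_2,t)-K(z_1,z_2,t) = \underbrace{\bigl(\tfrac{1}{m}\sum_i - \E\bigr)\bigl[\one\{q(z_1,\xi_i)\ge t\}\, w(z_{2,0},\xi_i)\bigr]}_{\text{(A)}}
 + \underbrace{\bigl(\tfrac{1}{m}\sum_i - \E\bigr)\bigl[\one\{q(z_1,\xi_i)\ge t\}\,(w(z_2,\xi_i)-w(z_{2,0},\xi_i))\bigr]}_{\text{(B)}}.
\]
Term (A) removes the $z_2$-dependence and has sub-exponential parameter $O(\sigma_0)$; term (B) contributes an extra factor proportional to $\|z_2-z_{2,0}\|\le r$ by assumption (b). For each term, I reduce $\|\cdot\|$ to a maximum over an $\tfrac12$-net of $\S^{d-1}$ of cardinality $\le 9^d$, contributing the $d$ inside $\Delta_\W$. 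Then, conditionally on the sample, Sauer-Shelah enumerates at most $m^{\VC(\W)}$ distinct indicator patterns, contributing the $\VC(\W)\log m$ term. For term (B), I cover $\mathcal{Z}_2$ by an $\eta$-net of size $(3r/\eta)^d$, absorbing the $\log(r/\eta)$ factor into the $d$ term by taking $\eta$ polynomially small in $m$; the residual oscillation between $z_2$ and its nearest net point is dominated by the Lipschitz-subexponential control in assumption (b) together with the continuity in assumption (c). On each fixed unit vector, fixed indicator pattern, and fixed anchor point, I apply the Bernstein bound~\eqref{eqn:sub-exponential-immediate} at confidence $\delta / (\text{enumeration size})$. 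The $\min\{u,u^2\}$ exponent in Bernstein is precisely what produces the $\max\{\Delta_\W,\Delta_\W^2\}$ in the final bound; taking a union bound across all coordinates of the enumeration yields the quoted rate.

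The main obstacle is making the Sauer-Shelah discretization rigorous, because the enumeration depends on the random sample $(\xi_i)$. I plan to address this through a symmetrization step (e.g.\ \cite[Section 2.3]{VanDerVaartWe96}): replace the centered empirical process by its Rademacher-symmetrized version, after which one may condition on $(\xi_i)$, perform the pattern enumeration deterministically, and then apply Hoeffding-style sub-exponential Bernstein to the Rademacher sums. The outer measure $\P^*$ in~\eqref{eqn:crazy-ept} is needed precisely because the supremum ranges over an uncountable set; the $\eta$-net reduction together with the continuity in assumption (c) lets me identify the supremum over $\mathcal Z_1\times\mathcal Z_2\times\R$ with a countable supremum up to an arbitrarily small deterministic error, so that measurability is not lost. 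Collecting (A) and (B) with a union bound at level $\delta/2$ each completes the proof with the stated rate $\Delta_\W=\sqrt{(d+\VC(\W)\log m+\log(1/\delta))/m}$ and universal constant $C$.
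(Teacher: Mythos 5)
Your high‑level plan shares the paper's key ingredients --- symmetrization, Sauer--Shelah pattern enumeration, a $\tfrac12$-net of the sphere to reduce to scalar inner products, and Bernstein for sub-exponential variables --- and the measurability treatment via continuity to a countable dense subset matches the paper's approach after symmetrization. So the scaffolding is right. The genuine gap is entirely in your handling of term (B).

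You propose to control the $z_2$-dependence with a \emph{single} $\eta$-net of $\mathcal{Z}_2$ of size $(3r/\eta)^d$ and then ``dominate the residual oscillation'' between $z_2$ and its nearest net point $\pi(z_2)$ using assumption (b) and continuity. This does not close. Assumption (b) is distributional, not pathwise: it says $w(z_2,\xi) - w(z_2',\xi)$ is $\sigma\|z_2-z_2'\|$-sub-exponential, which bounds the tails of the residual at any \emph{fixed} pair but gives no deterministic envelope $L(\xi)$ with $\|w(z_2,\xi)-w(\pi(z_2),\xi)\| \le L(\xi)\|z_2-\pi(z_2)\|$, nor does it control $\sup_{\|z_2-\pi(z_2)\|\le\eta}$ of the (centered) residual empirical process over an uncountable $\eta$-ball. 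The standard escape is precisely Dudley-style \emph{multi-scale chaining}: write $z_2\mapsto z_{2,0}\mapsto\pi_1(z_2)\mapsto\pi_2(z_2)\mapsto\cdots$ with geometrically shrinking covers $\varepsilon_k = r 2^{-k}$, apply the Sauer--Shelah $\times$ sphere-net $\times$ Bernstein bound at each scale to every chain increment, and sum the telescope. That is exactly what the paper does; your single-net plan would need either a pathwise Lipschitz envelope (not assumed) or continued chaining to cross the $\eta$-ball.

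Even if the residual were magically handled, a single net at resolution $\eta \sim r/m^c$ gives $\log(\mathrm{net\ size}) \sim d\log m$, which lands under the square root as $\sqrt{d\log m/m}$. The stated rate has $d$ with \emph{no} $\log m$ factor, i.e., $\Delta_\W=\sqrt{(d+\VC(\W)\log m+\log(1/\delta))/m}$. Chaining is what removes that extra $\log m$ on the $d$ term, because at scale $k$ the failure probability can be set to $\delta 2^{-(k+1)}(|\mathcal{Z}_{2,\varepsilon_k}||\mathcal{Z}_{2,\varepsilon_{k-1}}|)^{-1}$ and the geometric weights $\varepsilon_k$ absorb the scale-dependent $\sqrt{kd/m}$ contributions into $O(r\Delta_\W)$. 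So your claim that the $\log(r/\eta)$ factor can be ``absorbed into the $d$ term'' is mistaken: it is present and strictly degrades the bound. To repair the proof, replace the single $\eta$-net for $\mathcal{Z}_2$ with a full chaining telescope and bound each increment via exactly the symmetrization/Sauer--Shelah/Bernstein lemma you already have for a single pair of anchors.
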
 

\vspace{.5cm}


We go back and prove Theorem~\ref{theorem:basic-statistical-learning-result}.

We deduce high probability bounds on $\Gamma_1, \Gamma_2$ by applying 
Theorem~\ref{theorem:uniform-convergence-theorem} to two different sets of functions $q, w$. 
Recall the following definition of $\Delta$ as given in equation~\eqref{eqn:error-definition}:
\begin{equation*}
	\Delta =  \sqrt{\frac{1}{m} \cdot \Big(d+\VC(\F)\log m +\log( \frac{1}{\delta})\Big)}.
\end{equation*} 
Below we set $\mathcal{Z}_1 = \R^d$ and $\mathcal{Z}_2 = \B(x_0; r)$. 

\begin{enumerate}
\item (Bounds on $\Gamma_1$)
First, we set $q(x, \xi) = c(x, \xi)$
and $w(x', \xi) = \grad c(x', \xi)$. 
Given Assumptions~\ref{assumption:sub-exponential-assumption} and~\ref{assumption:Lipschitz-condition}, 
the conditions of Theorem~\ref{theorem:uniform-convergence-theorem} are satisfied. Applying this theorem 
yields the following guarantee. Let us define
\begin{equation*}
\begin{split} 
	\Gamma_1' = \sup_{x \in \R^d, t \in \R, x' \in \B(x_0; r)} \norm{\E_{\xi \sim \P_m}[\one\{c(x; \xi)  \ge t\} \cdot \grad c(x'; \xi)] - 
		\E_{\xi \sim \P}[\one\{c(x; \xi)  \ge t\} \cdot \grad c(x'; \xi)]}.
\end{split} 
\end{equation*}
Then there exists a universal constant $C > 0$ such that for every $\delta \in (0, 1)$: 
\begin{equation*}
	\P^*(\Gamma_1' \ge C(\sigma r+\sigma_0)\cdot \max\{\Delta, \Delta^2\}) \le \delta.
\end{equation*} 
Since $\Gamma_1 \le \Gamma_1'$ always holds by definition, this establishes the bound: 
\begin{equation}
\label{eqn:Gamma-high-probability-bound-one}
	\P^*(\Gamma_1 \ge C(\sigma r+\sigma_0)\cdot \max\{\Delta, \Delta^2\}) \le \delta.
\end{equation} 

\item (Bounds on $\Gamma_2$) Next, we set $q(x, \xi) \equiv 0$ and 
$w(x', \xi) = (\loss\sm)^\prime(c(x'; \xi)) \cdot \grad c(x'; \xi)$. Given Assumptions~\ref{assumption:sub-exponential-assumption} 
and~\ref{assumption:Lipschitz-condition}, the conditions of Theorem~\ref{theorem:uniform-convergence-theorem} are satisfied. 
Applying this theorem provides the following guarantee. Let us define 
\begin{equation*}
\begin{split} 
	\Gamma_2' = \sup_{t \in \R, x' \in \B(x_0; r)} \norm{\E_{\xi \sim \P_m}[\one\{0  \ge t\} \cdot (\loss\sm)^\prime(c(x'; \xi)) \grad c(x'; \xi)] - 
		\E_{\xi \sim \P}[\one\{0  \ge t\} \cdot (\loss\sm)^\prime(c(x'; \xi)) \grad c(x'; \xi)]} .
\end{split} 
\end{equation*}
Then there exists a universal constant $C > 0$ such that for every $\delta \in (0, 1)$: 
\begin{equation*}
	\P^*(\Gamma_2' \ge C(\sigma r+\sigma_0)\cdot \max\{\Delta, \Delta^2\}) \le \delta.
\end{equation*} 
Since $\Gamma_2 \le \Gamma_2'$ by definition, this establishes the bound: 
\begin{equation}
\label{eqn:Gamma-high-probability-bound-two}
	\P^*(\Gamma_2 \ge C(\sigma r+\sigma_0)\cdot \max\{\Delta, \Delta^2\}) \le \delta.
\end{equation} 
\end{enumerate} 

With both high probability bounds on $\Gamma_1$ and $\Gamma_2$ in 
equations~\eqref{eqn:Gamma-high-probability-bound-one}
and~\eqref{eqn:Gamma-high-probability-bound-two}, we can use the inequality 
\eqref{eqn:basic-inequality-bounds-on-G} to deduce that 
\begin{equation*}
	\P^*\left( \sup_{x \in \B(x_0; r)} \norm{G_S(x) - G(x)} \ge C \cdot \Big( \sum_{j=1}^\infty a_j + 1\Big) \cdot (\sigma r + \sigma_0) \cdot \max\{\Delta, \Delta^2\}\right) \le 2\delta. 
\end{equation*} 
By appropriately adjusting the numerical constant from $\delta$ to $\delta/2$, and noting that 
$\zeta = 1+\sum_{j=1}^\infty a_j$ by definition, Theorem~\ref{theorem:basic-statistical-learning-result} follows.


\subsection{Proof of Theorem~\ref{theorem:uniform-convergence-theorem}}
\label{sec:proof-theorem-uniform-convergence-theorem}

Our proof of Theorem~\ref{theorem:uniform-convergence-theorem} 
combines two well-established techniques 
for proving uniform bounds on the supremum of a random process in the statistical learning theory: 
chaining, which constructs a sequence of progressively finer approximations to the continuous process, 
and Sauer-Shelah Lemma, a combinatorial principle using the VC dimension to control the 
complexity of the function class under consideration~\cite[Chapter 2]{VanDerVaartWe96}.
The chaining argument specifically addresses the continuous parameter $z_2$, and the complexity of its increments is 
then controlled using the VC dimension technique applied to the function class specified by the parameters $z_1, t$. 
To deal with measurability issues, we use the calculus of \emph{outer integral}~\cite[Chapter 1.2]{VanDerVaartWe96}.
A crucial part of our proof demonstrates that our process becomes measurable after symmetrization, building on
ideas in the monograph~\cite[Chapter 2.3]{VanDerVaartWe96}. 

Given the subtlety in dealing with measurability, and differences from existing literature, which typically focuses either on chaining or on VC dimension arguments 
applied to indicator functions, we provide detailed explanations of our combined approach to ensure clarity and completeness. 

\subsubsection{Chaining Argument}
For notational simplicity, below we denote: 
\begin{equation*}
	R_m(z_1, z_2, t) = K_m(z_1, z_2, t) - K(z_1, z_2, t). 
\end{equation*}

The main idea in the chaining argument involves discretizing the parameter $z_2$ into increasingly finer subsets, 
constructing chains to approximate the supremum of $R_m(z_1, z_2, t)$, 
which allows more tractable probabilistic analysis. 

Let $\eps_k = r 2^{-k}$ for every $k \ge 0$. An $\eps_k$-cover of $\mathcal{Z}_2$, denoted by $\mathcal{Z}_{2, \eps_k}$, 
enables the approximation of every point $z_2 \in \mathcal{Z}_2$ by a point $\pi_k(z_2) \in \mathcal{Z}_{2, \eps_k}$ 
such that $\norm{z_2-\pi_k(z_2)} \le \eps_k$. Specifically, we take the initial cover $\mathcal{Z}_{2, \eps_0} = \{z_{2, 0}\}$ to consist
solely of the center $z_{2, 0}$ of the ball $\mathcal{Z}_2$. By employing a standard volume-type argument, we can further select
the cover $\mathcal{Z}_{2, \eps_k}$ such that $\log |\mathcal{Z}_{2, \eps_k}| \le p \log(2+r/\eps_k)$ for every $k \ge 1$
\cite[Corollary 4.2.13]{Vershynin18}.

Using these coverings, we can decompose $R_m(z_1, z_2, t)$ through a chaining argument~\cite{Dudley67}: 
\begin{equation*}
	R_m(z_1, z_2, t) = R_m(z_1, z_{2, 0}, t) + \sum_{k=1}^\infty \Big[R_m(z_1, \pi_k(z_2), t) - R_m(z_1, \pi_{k-1}(z_2), t)\Big].
\end{equation*}
This decomposition is valid because the limit $R_m(z_1, z_2, t) = \lim_{k \to \infty} R_m(z_1, \pi_k(z_2), t)$ holds with probability one,
as $z_2 \mapsto R_m(z_1, z_2, t)$ is continuous by assumption. 
Applying the supremum over  $z_1$, $z_2$, $t$ after taking the norm $\norm{\cdot}$, and using the triangle inequality we obtain: 
\begin{equation}
\label{eqn:chaining-identity}
\begin{split} 
	\sup_{z_1 \in \mathcal{Z}_1, z_2 \in \mathcal{Z}_2, t\in \R} \norm{R_m(z_1, z_2, t)} &\le 
		\sup_{z_1 \in \mathcal{Z}_1, t\in \R} \norm{R_m(z_1, z_{2, 0}, t)} \\
		 &~~~~~~~+  \sum_{k=1}^\infty \sup_{z_1 \in \mathcal{Z}_1, z_2 \in \mathcal{Z}_2, t\in \R} \norm{R_m(z_1, \pi_k(z_2), t) - R_m(z_1, \pi_{k-1}(z_2), t)}.
\end{split} 
\end{equation}
We will bound the tail behavior of every single term in this sum. To this end, the following lemma is useful.  For every $\delta \in (0, 1)$, define: 
\begin{equation}
\label{eqn:definition-Delta-delta}
	\Delta(\delta) := \sqrt{\frac{1}{m}\cdot \left(d + \VC(\W)\log(m)+\log(\frac{1}{\delta}) \right)}.
\end{equation} 

\begin{lemma}
\label{lemma:one-single-term}
There exists a universal constant $C > 0$ such that for any given pair $z_2, z_2' \in \mathcal{Z}_2$, the following occurs with an inner 
probability at least $1-\delta$: 
\begin{equation*}
	\sup_{z_1 \in \mathcal{Z}_1, t\in \R}   \norm{R_m(z_1, z_2, t) - R_m(z_1, z_2', t)}  \le C \sigma \cdot \norm{z_2 - z_2’} \cdot \max\{\Delta(\delta), \Delta(\delta)^2\}.
\end{equation*} 
Similarly, for the center $z_{2, 0}$ of $\mathcal{Z}_2$, the following occurs with an inner probability at least $1-\delta$: 
\begin{equation*}
	\sup_{z_1 \in \mathcal{Z}_1, t\in \R} \norm{R_m(z_1, z_{2, 0}, t)}  \le C \sigma_0 \cdot \max\{\Delta(\delta), \Delta(\delta)^2\}.
\end{equation*}
\end{lemma}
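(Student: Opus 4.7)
The plan is to prove both inequalities simultaneously by reducing them to tail bounds on a vector-valued empirical process indexed by a VC class of indicators times a subexponential envelope. I focus on the first inequality; the second (at the center $z_{2,0}$) follows by the same argument after replacing the increment $w(z_2, \xi) - w(z_2', \xi)$ by $w(z_{2,0}, \xi)$, which assumption (a) guarantees is $\sigma_0$-subexponential, and $\norm{z_2 - z_2'}$ by $1$.

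First I would reduce the vector supremum to a scalar one by covering the unit sphere. Choose a $\tfrac{1}{2}$-net $\mathcal{V} \subseteq S^{d-1}$ with $|\mathcal{V}| \le 5^d$, so that
\begin{equation*}
\sup_{z_1, t} \norm{R_m(z_1, z_2, t) - R_m(z_1, z_2', t)} \le 2 \sup_{z_1, t, v \in \mathcal{V}} \Big|\big\langle v, R_m(z_1, z_2, t) - R_m(z_1, z_2', t)\big\rangle\Big|.
\end{equation*}
The $\log|\mathcal{V}| \le d\log 5$ factor is precisely what will produce the $d$ term inside $\Delta(\delta)$. After this reduction, we deal with the scalar empirical process
\begin{equation*}
Y_{z_1, t, v} = \frac{1}{m}\sum_{i=1}^m \one\{q(z_1, \xi_i) \ge t\} \cdot \big\langle v, w(z_2, \xi_i) - w(z_2', \xi_i)\big\rangle - \E[\cdot].
\end{equation*}
For each fixed index $(z_1, t, v)$, the summand is the product of an indicator in the class $\W$ and a centered $\sigma\norm{z_2 - z_2'}$-subexponential scalar (by assumption (b) applied to the unit vector $v$). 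A direct application of Bernstein's inequality~\eqref{eqn:sub-exponential-immediate} thus gives $|Y_{z_1, t, v}| \le C\sigma\norm{z_2 - z_2'} \max\{s/\sqrt{m}, s/m\}$ with failure probability $2e^{-s}$.

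The core challenge is upgrading this pointwise tail estimate to a uniform one over the uncountable set $\mathcal{Z}_1 \times \R \times \mathcal{V}$. The plan is a standard symmetrization plus Sauer–Shelah argument in the style of~\cite[Chapter 2.3]{VanDerVaartWe96}. After symmetrization with independent Rademacher signs $\eps_i$ and conditioning on the sample $(\xi_i)_{i=1}^m$, the collection of vectors $(\one\{q(z_1, \xi_i) \ge t\})_{i=1}^m$ obtained as $(z_1, t)$ varies has cardinality at most $(em/\VC(\W))^{\VC(\W)}$. The effective index set therefore has size at most $5^d \cdot (em/\VC(\W))^{\VC(\W)}$, contributing $d + \VC(\W)\log m$ to the exponent of the union bound; combining this with the Bernstein deviation $\max\{s/\sqrt{m}, s/m\}$ and choosing $s \asymp d + \VC(\W)\log m + \log(1/\delta)$ yields the $\max\{\Delta(\delta), \Delta(\delta)^2\}$ form. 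Desymmetrizing and passing back from conditional to unconditional probability closes the argument.

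The main obstacle I expect is measurability bookkeeping, which is why the statement is phrased in terms of inner probability. The supremum over $(z_1, t) \in \mathcal{Z}_1 \times \R$ need not be Borel, so one cannot apply the classical symmetrization identity directly. The continuity hypothesis (c) allows us to restrict the supremum to a countable dense subset of $\mathcal{Z}_1 \times \R$ (using that $\mathcal{Z}_1 \subseteq \R^d$ and $\R$ are separable and that $z_1 \mapsto q(z_1, \xi)$ and $z_2 \mapsto w(z_2, \xi)$ are continuous), restoring measurability; after symmetrization, only the values of the process at the sample points $\xi_i$ matter, so the symmetrized supremum is manifestly measurable. Transferring the bound back via the outer-expectation version of the symmetrization inequality, and carefully tracking the resulting tail bound in outer probability $\P^*$, is the technical crux; the rest is the clean Bernstein–Sauer–Shelah union bound described above.
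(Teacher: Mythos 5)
Your proposal follows essentially the same route as the paper: reduce the vector supremum to a scalar one via a $\tfrac{1}{2}$-cover of the unit sphere (producing the $d$ term), symmetrize with Rademacher variables, invoke Sauer--Shelah to bound the number of binary trace vectors $(\one\{q(z_1,\xi_i) \ge t\})_{i=1}^m$ by a polynomial in $m$ of degree $\VC(\W)$ (producing the $\VC(\W)\log m$ term), and handle measurability with countable dense subsets together with outer probability as in van der Vaart--Wellner. This matches the paper's proof of Lemma~\ref{lemma:key-idea-behind-the-lemma}, including the observation that the second inequality is the special case $u(\xi) = w(z_{2,0}, \xi)$.

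One technical point you should be more precise about: you phrase the union-bound step as applying the Bernstein tail bound~\eqref{eqn:sub-exponential-immediate} pointwise and then taking a union bound after Sauer--Shelah. But the trace count from Sauer--Shelah is a bound on a \emph{random} index set conditional on $\xi_{1:m}$, and after conditioning the only randomness is in the Rademacher signs, so the conditional tail probabilities are governed by empirical second moments of $u_v(\xi_i)$ rather than by the population subexponential parameter you quote. The paper sidesteps this cleanly by working at the level of moment generating functions throughout: bound $\exp(\sup)$ by $\sum \exp$ over the finite trace set, remove the indicator factor $b_i$ using $\cosh(b_i q_i) \le \cosh(q_i)$, and then take the joint expectation over $(\rad_i, \xi_i)$ so the unconditional subexponential MGF bound applies directly. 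If you instead insist on a conditional-Bernstein argument you must also control the empirical variances; the MGF route avoids this. Your overall roadmap is correct, so this is a gap in exposition rather than in the idea, but it is exactly where the "standard" reference you lean on does real work.
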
 
The proof of Lemma~\ref{lemma:one-single-term} is deferred to Section~\ref{sec:bound-on-E-1-u}. 

Back to our bound of the tail behavior of every term on the RHS of equation~\eqref{eqn:chaining-identity}. Fix $\delta \in (0, 1)$. 
Let us denote $\delta_0 = \delta/2$ and 
\begin{equation*}
	\delta_k = \delta \cdot 2^{-(k+1)} \cdot (|\mathcal{Z}_{2, \eps_k}| |\mathcal{Z}_{2, \eps_{k-1}}|)^{-1}
\end{equation*}
for each $k \ge 1$. Below we use $C$ to denote the universal constant that appears in Lemma~\ref{lemma:one-single-term}.

First, by Lemma~\ref{lemma:one-single-term}, with an inner probability at least $1-\delta/2$: 
\begin{equation*}
	\sup_{z_1 \in \mathcal{Z}_1, t\in \R} \norm{R_m(z_1, z_{2, 0}, t)}  \le C \sigma_0 \cdot \max\{\Delta(\delta_0), \Delta(\delta_0)^2\}.
\end{equation*}
This controls the first term on the RHS of equation~\eqref{eqn:chaining-identity}.

Next, the adjustment of $\delta$ into $\delta_k$ allows the use of a union bound to extend the probability guarantees 
from Lemma~\ref{lemma:one-single-term} from individual pairs to all pairs within the cover sets $\mathcal{Z}_{2, \eps_k}$
and $\mathcal{Z}_{2, \eps_{k-1}}$. Specifically, for each $k \ge 1$, 
with an inner probability at least $1-\delta \cdot 2^{-(k+1)}$, there is the bound: 
\begin{equation*}
	\sup_{z_1 \in \mathcal{Z}_1, z_2 \in \mathcal{Z}_2, t\in \R} \norm{R_m(z_1, \pi_k(z_2), t) - R_m(z_1, \pi_{k-1}(z_2), t)}
		 \le 3C \sigma \eps_k \cdot \max\{\Delta(\delta_k), \Delta(\delta_k)^2\}.
\end{equation*} 
This result holds because the cardinality of pairs of $\{(\pi_k(z_2), \pi_{k-1}(z_2))\}$ over $z_2 \in \mathcal{Z}_2$
is capped by $|\mathcal{Z}_{2, \eps_k}||\mathcal{Z}_{2, \eps_{k-1}}|$, and there is the bound 
$\norm{\pi_k(z_2) - \pi_{k-1}(z_2)} \le 3\eps_k$ that holds for every $z_2 \in \mathcal{Z}_2$. 

By substituting all the above high probability bounds into the RHS of equation~\eqref{eqn:chaining-identity}, and using the union bounds, 
noting $\sum_{k=0}^\infty \delta \cdot 2^{-(k+1)} = \delta$,
we obtain with an inner probability at least $1-\delta$: 
\begin{equation*}
	\sup_{z_1 \in \mathcal{Z}_1, z_2 \in \mathcal{Z}_2, t\in \R} \norm{R_m(z_1, z_2, t)}
		\le C \sigma_0 \cdot \max\{\Delta(\delta_0), \Delta(\delta_0)^2\}
			+ 3C  \sigma \cdot \sum_{k=1}^\infty  \eps_k \cdot \max\{\Delta(\delta_k), \Delta(\delta_k)^2\}.
\end{equation*} 
To accumulate terms on the RHS, we deduce a basic bound whose proof is deferred to Section~\ref{sec:proof-a-basic-bound}.

\vspace{.2cm}
\begin{lemma}
\label{lemma:basic-bound}
For some universal constant $c > 0$: 
\begin{equation*}
	 \sum_{k=1}^\infty  \eps_k \cdot \max\{\Delta(\delta_k), \Delta(\delta_k)^2\} \le c r \max\{\Delta(\delta), \Delta(\delta)^2\}.
\end{equation*}
\end{lemma}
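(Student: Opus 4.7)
The plan is to absorb the dependence of $\Delta(\delta_k)$ on $k$ into a universal constant using the geometric decay of $\eps_k = r 2^{-k}$. First, I would control $\log(1/\delta_k)$. By the standard volume covering bound we have $\log|\mathcal{Z}_{2,\eps_k}| \le d\log(2+r/\eps_k) = d\log(2+2^k) \le c_1 d\, k$ for $k \ge 1$ and some absolute constant $c_1$. Substituting into the definition $\delta_k = \delta\cdot 2^{-(k+1)}\cdot(|\mathcal{Z}_{2,\eps_k}||\mathcal{Z}_{2,\eps_{k-1}}|)^{-1}$, this gives
\begin{equation*}
\log(1/\delta_k) \le \log(1/\delta) + (k+1)\log 2 + \log|\mathcal{Z}_{2,\eps_k}| + \log|\mathcal{Z}_{2,\eps_{k-1}}| \le \log(1/\delta) + c_2 d\, k
\end{equation*}
for an absolute constant $c_2$. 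Plugging into the definition~\eqref{eqn:definition-Delta-delta} of $\Delta(\cdot)$, one obtains
$\Delta(\delta_k)^2 \le \Delta(\delta)^2 + \frac{c_2 d\, k}{m}$, and hence
$\Delta(\delta_k) \le \Delta(\delta) + \sqrt{c_2 d\, k/m}$ by subadditivity of $\sqrt{\cdot}$.

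Next, I would sum the series. Using $\eps_k = r 2^{-k}$ and the elementary fact that $\sum_{k\ge 1} 2^{-k}\sqrt{k}$ and $\sum_{k\ge 1} 2^{-k} k$ are finite, the linear bound above gives
\begin{equation*}
\sum_{k=1}^\infty \eps_k \Delta(\delta_k) \le r\Delta(\delta)\sum_{k=1}^\infty 2^{-k} + r\sqrt{c_2 d/m}\sum_{k=1}^\infty 2^{-k}\sqrt{k} \le c_3 r\bigl(\Delta(\delta) + \sqrt{d/m}\bigr),
\end{equation*}
and similarly, squaring the bound on $\Delta(\delta_k)$ and using $(a+b)^2 \le 2a^2+2b^2$,
\begin{equation*}
\sum_{k=1}^\infty \eps_k \Delta(\delta_k)^2 \le 2r\Delta(\delta)^2\sum_{k=1}^\infty 2^{-k} + 2c_2 r\, (d/m)\sum_{k=1}^\infty 2^{-k} k \le c_4 r\bigl(\Delta(\delta)^2 + d/m\bigr).
\end{equation*}

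Finally, I would absorb the extra $\sqrt{d/m}$ and $d/m$ terms into $\Delta(\delta)$ and $\Delta(\delta)^2$ respectively. This is immediate from the definition of $\Delta(\delta)$, which gives $\sqrt{d/m} \le \Delta(\delta)$ and $d/m \le \Delta(\delta)^2$. Using the trivial bound $\max\{\Delta(\delta_k),\Delta(\delta_k)^2\} \le \Delta(\delta_k)+\Delta(\delta_k)^2$, combining the two displays yields $\sum_{k\ge 1} \eps_k \max\{\Delta(\delta_k),\Delta(\delta_k)^2\} \le c_5 r\bigl(\Delta(\delta) + \Delta(\delta)^2\bigr) \le 2 c_5 r\max\{\Delta(\delta),\Delta(\delta)^2\}$, which is the claim. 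There is no real obstacle here; the only care needed is in bounding $\log(1/\delta_k)$ linearly in $k$ via the covering number estimate, since the exponential decay $2^{-k}$ in $\eps_k$ comfortably dominates the at-most-linear growth of $\log(1/\delta_k)$ (and its square root) after summation.
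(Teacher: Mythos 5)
Your proposal is correct and follows essentially the same route as the paper's proof: bound $\log(1/\delta_k)$ linearly in $k$ via the covering number estimate $\log|\mathcal{Z}_{2,\eps_k}|\le d\log(2+2^k)$, split $\Delta(\delta_k)$ into $\Delta(\delta)$ plus a $\sqrt{kd/m}$ remainder, and sum the geometric series, with the extra term absorbed using $\sqrt{d/m}\le\Delta(\delta)$. The paper handles $\Delta$ and $\Delta^2$ by two parallel computations and then joins them, whereas you package the same steps slightly more explicitly; there is no substantive difference.
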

Ultimately, this proves the existence of a universal constant $\bar{C} > 0$ such that
\begin{equation*}
	\sup_{z_1 \in \mathcal{Z}_1, z_2 \in \mathcal{Z}_2, t\in \R} \norm{R_m(z_1, z_2, t)} \le \bar{C} (\sigma_0 + \sigma r) \cdot \max\{\Delta(\delta), \Delta(\delta)^2\}
\end{equation*}
holds  with probability at least $1-\delta$.
The proof of Theorem~\ref{theorem:uniform-convergence-theorem} is then complete.

\subsubsection{Bounding Individual Term using VC-Dimension (Proof of Lemma~\ref{lemma:one-single-term})} 
\label{sec:bound-on-E-1-u}
We give a unifying proof for the two statements in Lemma~\ref{lemma:one-single-term}. 
To simplify the notation, we define:
\begin{equation*}
	\hh_{z_1, t}(\xi) = \one \{q(z_1, \xi) \ge t\}.
\end{equation*}
We will show that it suffices to prove the following more general statement.  

\vspace{.2cm}
\begin{lemma}
\label{lemma:key-idea-behind-the-lemma}
There exists a universal constant $C > 0$ such that the following holds. Define: 
\begin{equation}
\label{eqn:expression-abstract}
	W_m(z_1, t) =  \frac{1}{m} \sum_{i=1}^m \hh_{z_1, t}(\xi_i) u(\xi_i) - \E[\hh_{z_1, t}(\xi) u(\xi)].
\end{equation}
where $u(\xi)$ in $\R^d$ is $\bar{\sigma}$ subexponential. Then it happens with an inner probability at least $1-\delta$: 
\begin{equation*}	
	\sup_{z_1 \in \mathcal{Z}_1, t\in \R}  \norm{W_m(z_1, t)} \le C\bar{\sigma} \cdot \max\{\Delta(\delta), \Delta(\delta)^2\}.
\end{equation*}
where $\Delta(\delta)$ is defined in equation~\eqref{eqn:definition-Delta-delta}.
\end{lemma}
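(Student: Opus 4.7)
The plan is a three-step reduction: scalarize via a covering of the unit sphere, symmetrize to expose Rademacher structure, then combine the Sauer--Shelah lemma with Bernstein's inequality. First, I would replace the Euclidean norm by a supremum over a finite net. Fix a $\tfrac{1}{2}$-cover $\mathcal{N}$ of the unit sphere in $\R^d$ with $|\mathcal{N}| \le 6^d$. Then a standard duality argument gives
\begin{equation*}
\sup_{z_1, t} \norm{W_m(z_1, t)} \;\le\; 2 \sup_{v \in \mathcal{N}} \sup_{z_1, t} \Bigl|\bigl\langle v, W_m(z_1, t)\bigr\rangle\Bigr|,
\end{equation*}
so it suffices to bound, for each fixed $v \in \mathcal{N}$, the scalar empirical process
\begin{equation*}
W_m^v(z_1, t) \;=\; \frac{1}{m}\sum_{i=1}^m \hh_{z_1, t}(\xi_i)\,\langle v, u(\xi_i)\rangle - \E\bigl[\hh_{z_1, t}(\xi)\,\langle v, u(\xi)\rangle\bigr],
\end{equation*}
uniformly over $(z_1, t)$, and then take a union bound over $\mathcal{N}$. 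The cardinality of $\mathcal{N}$ contributes a $d$ term to the complexity.

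Second, for a fixed $v$, I would apply the symmetrization inequality of \cite[Chapter 2.3]{VanDerVaartWe96}, which is stated in terms of outer probability and is therefore compatible with our measurability framework. This reduces the tail of $\sup_{z_1, t}|W_m^v(z_1, t)|$ to that of the Rademacher process $\sup_{z_1, t}\bigl|\tfrac{1}{m}\sum_i \eps_i \hh_{z_1, t}(\xi_i)\langle v, u(\xi_i)\rangle\bigr|$, where $\eps_i$ are independent Rademacher signs. Conditioning on $\xi_1, \ldots, \xi_m$, the binary vector $(\hh_{z_1, t}(\xi_1), \ldots, \hh_{z_1, t}(\xi_m))$ ranges, as $(z_1, t)$ varies, over at most $(em/\VC(\W))^{\VC(\W)}$ distinct patterns by the Sauer--Shelah lemma applied to the VC class $\W$. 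A crucial byproduct is that, after symmetrization and after collapsing to patterns, the supremum becomes a maximum over a finite (random but measurable) set, which resolves the measurability issue.

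Third, for each fixed pattern $b \in \{0, 1\}^m$, conditionally on $\xi_1, \ldots, \xi_m$ the sum $\tfrac{1}{m}\sum_i \eps_i b_i \langle v, u(\xi_i)\rangle$ is a Rademacher average of fixed vectors; unconditionally, each summand $b_i \langle v, u(\xi_i)\rangle$ is $\bar\sigma$-subexponential. Applying Bernstein's inequality in the sub-exponential form (see equation~\eqref{eqn:sub-exponential-immediate}) yields a tail bound of the shape $2\exp(-cm \min\{u^2/\bar\sigma^2, u/\bar\sigma\})$ for each pattern. Taking a union bound over the $(em/\VC(\W))^{\VC(\W)}$ patterns, the net $\mathcal{N}$ of cardinality $6^d$, and inverting the Bernstein exponent, I obtain with inner probability at least $1 - \delta$
\begin{equation*}
\sup_{z_1, t} \norm{W_m(z_1, t)} \;\le\; C \bar\sigma \cdot \max\{\Delta(\delta), \Delta(\delta)^2\},
\end{equation*}
where $\Delta(\delta)^2 \asymp \tfrac{1}{m}\bigl(d + \VC(\W)\log m + \log(1/\delta)\bigr)$, matching the target. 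The $\max\{\Delta, \Delta^2\}$ form arises from inverting the piecewise quadratic-linear Bernstein exponent $\min\{u^2, u\}$.

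The main obstacle is measurability of the uncountable supremum defining $W_m$, which we address by working throughout with outer probabilities and invoking the outer-measure version of the symmetrization lemma. A secondary technical point is handling the unbounded sub-exponential vectors $u(\xi_i)$ inside the Rademacher sum: one option is a truncation at level $\sim \bar\sigma \log m$, controlling the discarded mass by a direct union bound $\P(|\langle v, u(\xi)\rangle| \ge C\bar\sigma \log m) \le 2 m^{-C}$; another is to apply Bernstein directly to unbounded sub-exponential summands. Either route costs only a $\log m$ factor, which is already absorbed in the $\VC(\W)\log m$ term of $\Delta(\delta)$.
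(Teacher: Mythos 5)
Your skeleton matches the paper's proof precisely through the first two steps: bound the Euclidean norm via a $\tfrac{1}{2}$-net $\mathcal{N}$ of cardinality $6^d$, symmetrize with outer-integral machinery, and collapse the supremum over $(z_1,t)$ to a maximum over the data-dependent pattern set $H(\xi_{1:m})$ of size at most $(2m)^{\VC(\W)}$ by Sauer--Shelah. The paper also uses this reduction to resolve measurability, as you observe.

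The gap is in your third step. You propose to fix a pattern $b\in\{0,1\}^m$, apply Bernstein \emph{unconditionally} to $\tfrac{1}{m}\sum_i \eps_i b_i u_v(\xi_i)$, and then union bound over the Sauer--Shelah count of patterns. But the set $H(\xi_{1:m})$ is \emph{random}: which $(2m)^{\VC(\W)}$ patterns appear depends on the sample. An unconditional Bernstein tail for fixed $b$ cannot simply be union-bounded over a data-dependent collection; the naive fix of union bounding over all $2^m$ patterns gives a vacuous $m$ in the exponent. The standard repair is to condition on $\xi_{1:m}$, union bound over the (now deterministic) $\le (2m)^{\VC(\W)}$ patterns using conditional Hoeffding in the Rademacher sign $\eps$, and then uncondition — but then the conditional variance is $\tfrac{1}{m^2}\sum_i b_i^2 u_v(\xi_i)^2$, which is random and unbounded under your subexponential hypothesis. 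Your truncation alternative removes the unboundedness but introduces a multiplicative $\bar\sigma\log m$ factor \emph{outside} the square root, which is \emph{not} absorbed by the $\VC(\W)\log m$ term sitting \emph{inside} the square root of $\Delta(\delta)$; it weakens the stated rate unless you additionally concentrate the empirical second moment $\tfrac{1}{m}\sum_i u_v(\xi_i)^2$, which is yet another envelope argument the proposal does not supply.

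The paper sidesteps both difficulties at once by working with conditional moment-generating functions. After symmetrization and conditioning on $\xi_{1:m}$, it bounds the exponential of the max by a sum of exponentials over patterns and then applies the inequality $\E_{\eps_i}[\exp(\tau\eps_i b_i q_i)] = \cosh(\tau b_i q_i) \le \cosh(\tau q_i) = \E_{\eps_i}[\exp(\tau\eps_i q_i)]$, which uniformly removes all $b$-dependence \emph{before} unconditioning. This turns the sum over the at-most $(2m)^{\VC(\W)}$ realized patterns into the deterministic prefactor $|H(\xi_{1:m})|\le(2m)^{\VC(\W)}$, leaving a clean product of unconditional subexponential MGFs $\E_{\eps_i,\xi_i}[\exp(\tfrac{\tau}{m}\eps_i u_v(\xi_i))]$, and the rate follows by a Chernoff/Markov step with no extraneous $\log m$. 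If you want to follow your union-bound route, you would need to add a separate concentration or envelope argument for the conditional variance; otherwise you should replace that step with the MGF/$\cosh$ argument.
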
 

We will first demonstrate how Lemma~\ref{lemma:key-idea-behind-the-lemma} implies
Lemma~\ref{lemma:one-single-term} and defer the proof of Lemma~\ref{lemma:key-idea-behind-the-lemma} to the end.
Introduce two notation: 
\begin{equation*}
	u_{z_2, z_2'}(\xi) =  w(z_2, \xi) - w(z_2', \xi),~~~u_{z_{2, 0}}(\xi) = w(z_{2, 0}, \xi).
\end{equation*}
These notation lead us to the following critical representation:
\begin{equation}
\label{eqn:expression-concrete}
\begin{split} 
	 R_m(z_1, z_2, t) - R_m(z_1, z_2', t)  &= \frac{1}{m} \sum_{i=1}^m \hh_{z_1, t}(\xi_i) u_{z_2, z_2'}(\xi_i) - \E\left[ \hh_{z_1, t}(\xi) u_{z_2, z_2'}(\xi)\right] \\
	 			R_m(z_1, z_{2, 0}, t) &= \frac{1}{m} \sum_{i=1}^m \hh_{z_1, t}(\xi_i) u_{z_{2, 0}}(\xi_i) - \E\left[ \hh_{z_1, t}(\xi) u_{z_{2, 0}}(\xi)\right]
\end{split} .
\end{equation}
Note the similarities between the expressions in equation~\eqref{eqn:expression-abstract} and 
in equation~\eqref{eqn:expression-concrete}. Because
$u_{z_2, z_2'}(\xi)$ and $u_{z_{2, 0}}(\xi)$ are subexponential random vectors, the conditions of 
Lemma~\ref{lemma:one-single-term} are met. More precisely, $u_{z_2, z_2'}(\xi)$
is $\sigma \norm{z_2 - z_2'}$ subexponential for every pair of $z_2, z_2' \in \mathcal{Z}_2$, 
and $u_{z_{2, 0}}(\xi)$ is $\sigma_0$ subexponential.
This allows us to leverage Lemma~\ref{lemma:key-idea-behind-the-lemma} to conclude 
that Lemma~\ref{lemma:one-single-term} follows.

In the remainder, we prove Lemma~\ref{lemma:key-idea-behind-the-lemma}. Our proof exploits the fact that the VC 
dimension of the set family $\W$ controls the Rademacher complexity associated with the indicators $\hh_{z_1, t}(\xi)$.
Additionally, we utilize calculus rules for \emph{outer probability measures}~\cite[Section 1.2-5]{VanDerVaartWe96} to address
measurability issues. Recall the notion of \emph{outer integral}~\cite[Section 1.2]{VanDerVaartWe96}. 

\vspace{.3cm}
\begin{definition}
\label{definition:outer-integral}
Given a probability space $(\Xi, \mathscr{G}, \P)$, and 
an arbitrary map $T: \Xi \mapsto \R \cup \{+\infty\}$, we define the outer integral of $T$ by: 
\begin{equation*}
	\E^*[T]= \inf\{\E[U]: U \ge T,~~U: \Xi \mapsto \R 
\cup \{+\infty\}~\text{is measurable},~\text{and}~\E[U]~\text{exists}\}.
\end{equation*} 
\end{definition} 
Outer integrals exhibit monotone property: if $T_1(\xi) \le T_2(\xi)$ for every $\xi$, then $\E^*[T_1] \le \E^*[T_2]$. 
This is essentially all we need, as the remaining properties of outer integral will be reference from the monograph~\cite{VanDerVaartWe96}.
To ensure clarity when discussing the calculus rules of the \emph{outer integral}, we will explicitly 
reference the relevant chapters in the monograph~\cite{VanDerVaartWe96}.

\begin{proof}[Proof of Lemma~\ref{lemma:key-idea-behind-the-lemma}]
We define the moment-generating function, using the notion of outer integral (see Definition~\ref{definition:outer-integral}): 
\begin{equation}
	U(\tau) \defeq \E^*\left[\exp \left(\tau \cdot 
		\sup_{z_1 \in \mathcal{Z}_1, t\in \R} \norm{W_m(z_1, t)}\right)\right].
\end{equation} 
We aim to find a tight upper bound for $U(\tau)$ for every $\tau > 0$, which will allow us to infer a high probability upper bound
under the outer measure onto the random variable: 
\begin{equation*}
	\sup_{z_1 \in \mathcal{Z}_1, t\in \R} \norm{W_m(z_1, t)}.
\end{equation*}

Let $V_{1/2}$ denote a $1/2$ cover of unit ball in $\R^d$ with $\log |V_{1/2}| \le d \log 6$. Following a standard argument
(see, e.g.,~\cite[Exercise 4.2.2]{Vershynin18}), we know that for every $\bar{v} \in \R^d$: 
\begin{equation*}
	\norm{\bar{v}} \le 2\max_{v \in V_{1/2}} \langle \bar{v}, v \rangle. 
\end{equation*}
Applying this inequality to $W_m(z_1, t)$ for every $z_1 \in \mathcal{Z}_1$ and $t\in \R$, 
we obtain: 
\begin{equation}
\begin{split} 
	U(\tau)
		&\le \E^*\left[\exp\left(
		2\tau \cdot \sup_{z_1 \in \mathcal{Z}_1, t \in \R} ~\max_{v \in V_{1/2}} \langle W_m(z_1, t), v\rangle
		\right) \right] \\
\end{split} 
\end{equation}

We employ symmetrization techniques from empirical process theory to upper bound the RHS. 
To simplify the notation, we introduce a function
$u_v(\xi) = \langle u(\xi), v\rangle$, which 
allows us to express: 
\begin{equation*}
	\langle W_m(z_1, t), v\rangle = \frac{1}{m} \sum_{i=1}^m \hh_{z_1, t}(\xi_i) u_v(\xi_i) - \E\left[\hh_{z_1, t}(\xi) u_v(\xi)\right].
\end{equation*}
Furthermore, we introduce i.i.d. Rademacher random variables $\{\rad_i\}_{i=1}^m$ 
with $\P(\rad_i = \pm 1) = 1/2$, independent of the data samples $\{\xi_i\}_{i=1}^m$. To simplify 
our notation, we use $\rad_{1:m}$ to denote the vector $(\rad_1, \rad_2, \ldots, \rad_m)$ and 
$\xi_{1:m}$ to denote the vector $(\xi_1, \xi_2, \ldots, \xi_m)$.

Applying symmetrization techniques from empirical process theory for the outer integral~\cite[Lemma 2.3.1]{VanDerVaartWe96}, 
we deduce an upper bound for $U(\tau)$: 
\begin{equation}
\label{eqn:U-tau-first-bound}
\begin{split} 
	U(\tau) \le \E^*\left[\exp \left(4\tau \cdot 
		\sup_{z_1 \in \mathcal{Z}_1, t\in \R} \max_{v \in V_{1/2}} 
			\frac{1}{m}\sum_{i=1}^m \rad_i \hh_{z_1, t}(\xi_i) u_v(\xi_i) \right)\right]
\end{split} 
\end{equation} 
where $\E^*$ is understood as the outer integral on the product probability space involving the data 
samples $\xi_{1:m}$ and the independent Rademacher random variables $\rad_{1:m}$. 
To be more formal, we follow 
\cite[Chapter 2.3]{VanDerVaartWe96}. Recall that  a single random variable $\xi$ can be defined on 
the probability space $(\Xi, \mathscr{G}, \P)$. Thus, we can define our data samples $\xi_{1:m}$ on the product probability space 
$(\Xi^{\otimes m}, \sigma(\mathscr{G}^{\otimes m}), \P^{\otimes m})$, where 
$\Xi^{\otimes m}$ represents the $m$-fold Cartesian product of $\Xi$, $\sigma(\mathscr{G}^{\otimes m})$ 
represents the $\sigma$-field generated by the $m$-fold product of $\mathscr{G}$, and $\P^{\otimes m}$ 
is the $m$-fold product of $\P$, where $\xi_1, \xi_2, \ldots, \xi_m$ corresponds to the coordinate projections. 
Then, we introduce 
Rademacher random variables $\rad_{1:m} = (\rad_1, \rad_2, \ldots, \rad_m)$ 
defined on another probability space $(\Xi', \mathscr{H}, \P')$. We finally combine these to 
define the final product probability space of interest: 
\begin{equation}
\label{eqn:product-space}
	(\Xi^{\otimes m} \times \Xi', \sigma(\mathscr{G}^{\otimes m} \otimes \mathscr{H}), \P^{\otimes m} \otimes \P').
\end{equation}
In this product probability space, the sample $\xi_{1:m}$ corresponds to the first $m$ coordinates projections and the additional 
Rademacher random variables $\rad_{1:m}$ depending only on the $(m+1)$st coordinate. The outer integral $\E^*$ in 
equation~\eqref{eqn:U-tau-first-bound} then refers to the outer integral for this probability space. 

Lemma~\ref{lemma:measurability-issue} is the critical component that resolves the measurability issue.  From the empirical process 
theory, we know that measurability of certain forms is needed at this point, as Fubini's theorem is not valid for 
outer integrals~\cite[Chapter 2.3]{VanDerVaartWe96}. Lemma~\ref{lemma:measurability-issue}  follows 
the established ideas to resolve the measurability issues, 
see, e.g., discussions in the monograph on the measurable class~\cite[Definition 2.3.3]{VanDerVaartWe96}. 
The proof of Lemma~\ref{lemma:measurability-issue} is deferred to Section~\ref{sec:tackle-measurability}.
\begin{lemma}
\label{lemma:measurability-issue}
The mapping: 
\begin{equation*}
	(\xi_{1:m}, \rad_{1:m}) \mapsto 
	\sup_{z_1 \in \mathcal{Z}_1, t\in \R} \max_{v \in V_{1/2}} 
			\frac{1}{m}\sum_{i=1}^m \rad_i \hh_{z_1, t}(\xi_i) u_v(\xi_i) 
\end{equation*}
is measurable on the product probability space defined in equation~\eqref{eqn:product-space}.
\end{lemma}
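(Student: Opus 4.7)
The plan is to reduce the uncountable supremum over $(z_1,t) \in \mathcal{Z}_1 \times \mathbb{R}$ to a supremum over a countable subset, after which measurability follows automatically because countable suprema of measurable functions are measurable on the product space~\eqref{eqn:product-space}. The maximum over the finite set $V_{1/2}$ is harmless, so it suffices to fix $v \in V_{1/2}$ and prove that
$$(\xi_{1:m}, \rad_{1:m}) \mapsto \sup_{(z_1, t) \in \mathcal{Z}_1 \times \mathbb{R}} F_v(z_1, t; \xi, \rad)$$
is measurable, where $F_v(z_1, t; \xi, \rad) := \frac{1}{m}\sum_{i=1}^m \rad_i \, \one\{q(z_1, \xi_i) \ge t\} u_v(\xi_i)$ and $u_v(\xi) := \langle u(\xi), v\rangle$.

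Since $\mathcal{Z}_1 \subseteq \mathbb{R}^d$ is separable, I would fix a countable dense subset $\mathcal{Z}_1^o \subseteq \mathcal{Z}_1$ and set $D := \mathcal{Z}_1^o \times \mathbb{Q}$. The crucial pointwise claim (to hold for every realization of $(\xi_{1:m}, \rad_{1:m})$) is
$$\sup_{(z_1, t) \in D} F_v(z_1, t; \xi, \rad) = \sup_{(z_1, t) \in \mathcal{Z}_1 \times \mathbb{R}} F_v(z_1, t; \xi, \rad).$$
The direction $\le$ is trivial. For $\ge$, I would approximate any candidate $(z_1^*, t^*) \in \mathcal{Z}_1 \times \mathbb{R}$ by a sequence $(z_{1,n}, t_n) \in D$ with $z_{1,n} \to z_1^*$ and $t_n \uparrow t^*$ \emph{strictly from below}, and show term-by-term that $\one\{q(z_{1,n}, \xi_i) \ge t_n\} \to \one\{q(z_1^*, \xi_i) \ge t^*\}$ for each $i$, from which $F_v(z_{1,n}, t_n; \xi, \rad) \to F_v(z_1^*, t^*; \xi, \rad)$ follows.

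The term-by-term convergence reduces to case analysis on the sign of $q(z_1^*, \xi_i) - t^*$, using continuity of $z_1 \mapsto q(z_1, \xi_i)$ from assumption (c) of Theorem~\ref{theorem:uniform-convergence-theorem}. If $q(z_1^*, \xi_i) > t^*$, then $t_n \le t^*$ and continuity force $q(z_{1,n}, \xi_i) > t_n$ eventually, so both indicators equal $1$. If $q(z_1^*, \xi_i) < t^*$, then for large $n$ one has $q(z_1^*, \xi_i) < t_n < t^*$, and continuity gives $q(z_{1,n}, \xi_i) < t_n$, so both indicators equal $0$. The main obstacle is the boundary case $q(z_1^*, \xi_i) = t^*$: the target value is $\one\{q(z_1^*, \xi_i) \ge t^*\} = 1$, but a naive approximation with $t_n \downarrow t^*$ (or a generic $t_n \to t^*$) would let the approximating indicator collapse to $0$ because the map $t \mapsto \one\{q \ge t\}$ is only left-continuous, not right-continuous. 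The remedy is precisely the strict-from-below approach: then $t_n < t^* = q(z_1^*, \xi_i)$ together with $q(z_{1,n}, \xi_i) \to q(z_1^*, \xi_i)$ yields $q(z_{1,n}, \xi_i) > t_n$ eventually, matching the target value $1$. Once the pointwise identity is established, each $F_v(z_1, t; \cdot, \cdot)$ with $(z_1, t) \in D$ is measurable (a finite sum of products of Rademacher signs, indicators of measurable sets $\{q(z_1, \cdot) \ge t\}$, and the measurable function $u_v$), and the supremum over the countable set $D \times V_{1/2}$ is therefore measurable, completing the proof.
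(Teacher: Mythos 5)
Your general strategy matches the paper's: reduce the uncountable supremum to a supremum over a countable set via separability and the left-continuity of $t \mapsto \one\{w \ge t\}$. However, there is a genuine gap in your treatment of the boundary case $q(z_1^*, \xi_i) = t^*$. You claim that for an arbitrary sequence $(z_{1,n}, t_n)$ with $z_{1,n} \to z_1^*$ and $t_n \uparrow t^*$, one automatically has $q(z_{1,n}, \xi_i) > t_n$ eventually because $t_n < t^* = q(z_1^*, \xi_i)$. This does not follow: both $q(z_{1,n}, \xi_i)$ and $t_n$ converge to the common value $t^*$, and nothing forces the former to dominate the latter along the tail. Concretely, if $q(z_{1,n}, \xi_i) = t^* - 2/n$ (possible when $z_{1,n}$ approaches $z_1^*$ from the ``wrong side'' of a level set) while $t_n = t^* - 1/n$, then $q(z_{1,n}, \xi_i) < t_n$ for every $n$, so the approximating indicator stays at $0$ rather than converging to the target value $1$.

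The repair—and this is what the paper actually does—is to order the two choices rather than picking two sequences a priori. For a fixed $(z_1, t)$, first choose a rational $t^o < t$ such that $q(z_1,\xi_i) \neq t^o$ for every $i = 1, \dots, m$; this is possible because only finitely many values $q(z_1,\xi_1), \dots, q(z_1,\xi_m)$ must be avoided, and by left-continuity one can take $t^o$ close enough to $t$ from below so that $\hh_{z_1,t}(\xi_i) = \hh_{z_1,t^o}(\xi_i)$ for all $i$. Only then, using the strict inequality margin $\min_i |q(z_1,\xi_i) - t^o| > 0$ and continuity of $z_1 \mapsto q(z_1,\xi_i)$, choose $z_1^o \in \mathcal{Z}_1^o$ close enough to $z_1$ so that $\hh_{z_1,t^o}(\xi_i) = \hh_{z_1^o,t^o}(\xi_i)$ for all $i$. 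This yields exact equality $\hh_{z_1,t}(\xi_i) = \hh_{z_1^o,t^o}(\xi_i)$ at a single countable point $(z_1^o, t^o) \in D$, which is cleaner than a limit argument and sidesteps the failure mode above. Your proof as written needs this dependency of $z_{1,n}$ on $t_n$ (and on $\xi_{1:m}$) to be made explicit; without it, the term-by-term convergence claim in the boundary case is false.
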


Lemma~\ref{lemma:measurability-issue} provides the justification for replacing the outer integral $\E^*$ on the RHS 
of equation~\eqref{eqn:U-tau-first-bound} with the standard integral $\E$. Indeed, equation~\eqref{eqn:U-tau-first-bound}
now leads to
\begin{equation}
\label{eqn:U-tau-first-bound-nice}
\begin{split} 
	U(\tau) \le \E\left[\exp \left(4\tau \cdot 
		\sup_{z_1 \in \mathcal{Z}_1, t\in \R} \max_{v \in V_{1/2}} 
			\frac{1}{m}\sum_{i=1}^m \rad_i \hh_{z_1, t}(\xi_i) u_v(\xi_i) \right)\right]
\end{split} 
\end{equation} 
where the expectation on the RHS now is the standard expectation, taken jointly over the samples $\xi_{1:m}$ and the independent 
Rademacher random variables $\xi_{1:m}$. 

To further upper bound the expectation on the RHS of inequality~\eqref{eqn:U-tau-first-bound}, 
we first consider conditioning on the samples $\xi_{1:m}$. This means that we condition on
the values of $\xi_1, \xi_2, \ldots, \xi_m$, and study the quantity: 
\begin{equation*}
\begin{split}
	V(\tau, \xi_{1:m}) = \E_{\rad_{1:m}} \left[\exp \left(\tau \cdot \sup_{z_1 \in \mathcal{Z}_1, t\in \R}\max_{v \in V_{1/2}} 
		\frac{1}{m} \sum_{i=1}^m \rad_i \hh_{z_1, t}(\xi_i) u_v(\xi_i)\right) \right]
\end{split} 
\end{equation*}
where the symbol $\E_{\rad_{1:m}}$ means the standard expectation taken solely over the Rademacher variables $\rad_{1:m}$. 
As the Fubini's theorem holds for the standard expectation, under this notation, the inequality \eqref{eqn:U-tau-first-bound-nice} 
then becomes: 
\begin{equation}
\label{eqn:U-tau-second-bound}
	U(\tau) \le \E_{\xi_{1:m}} [V(4\tau, \xi_{1:m})]
\end{equation}
where the expectation on the RHS is taken over the samples $\xi_{1:m}$. That is to say, to upper bound $U(\tau)$, we 
first focus on bounding $V(\tau, \xi_{1:m})$ for every possible value of $\xi_{1:m}$. 

Fix $\xi_{1:m}$. Consider the set:
\begin{equation*}
\begin{split} 
	H(\xi_{1:m}) &:= \left\{(\hh_{z_1, t}(\xi_1), \hh_{z_1, t}(\xi_2), \ldots, \hh_{z_1, t}(\xi_m)): z_1 \in \mathcal{Z}_1, t\in \R\right\} \subseteq \{0, 1\}^m
\end{split} 
\end{equation*}
which represents all binary outcomes of applying $\hh_{z_1}$ to the data points $\xi_{1:m}$ across all $z_1 \in \mathcal{Z}_1$. Given that 
the VC-dimension of the set collection
\begin{equation*}
	\mathcal{W}:= \left\{\{\xi \in \Xi: q(z_1, \xi) \ge t\} \mid z_1 \in \mathcal{Z}_1, t\in \R\right\}
\end{equation*}
is upper bounded by $\VC(\W)$, Sauer-Shelah Lemma then constrains the size of $H(\xi_{1:m})$ to obey 
\begin{equation}
\label{eqn:sauer-lemma-result}
	|H(\xi_{1:m})| \le (2m)^{|\VC(\W)|}.
\end{equation}
For a reference of Sauer-Shelah Lemma, see, e.g.,~\cite{Sauer72, Shelah72}
or~\cite[Theorem 8.3.16]{Vershynin18}. 

Notably, this definition also allows replacing the supremum over continuous $z_1$ values 
with the supremum over the finite binary labeling set $H(\xi_{1:m})$. Specifically, there is the upper bound: 
\begin{equation*}
	\sup_{z_1 \in \mathcal{Z}_1, t\in \R}  \max_{v \in V_{1/2}} 
		\frac{1}{m} \sum_{i=1}^m \rad_i \hh_{z_1, t}(\xi_i) u_v(\xi_i)
	\le \max_{b \in H(\xi_{1:m})} \max_{v \in V_{1/2}} 
	\frac{1}{m} \sum_{i=1}^m \rad_i b_i u_v(\xi_i).
\end{equation*} 
On the RHS, $b \in H(\xi_{1:m}) \subseteq \{0, 1\}^m$ denotes a vector whose components are $b = (b_1, b_2, \ldots, b_m)$.

This establishes an upper bound on $V(\tau, \xi_{1:m})$ that applies for every $\tau > 0$: 
\begin{equation*}
	V(\tau, \xi_{1:m}) \le \E_{\rad_{1:m}} \left[\exp \left(\tau \cdot  \max_{b \in H(\xi_{1:m})}\max_{v \in V_{1/2}} 
		\frac{1}{m} \sum_{i=1}^m \rad_i b_i u_v(\xi_i)\right) \right].
\end{equation*} 
By bounding the exponential of a supremum with the sum of exponentials across all configurations, we can further upper bound 
$V(\tau, \xi_{1:m})$ and derive: 
\begin{equation}
\label{eqn:expression-complicated}
\begin{split} 
	V(\tau, \xi_{1:m})  &\le \E_{\rad_{1:m}} \left[\sum_{b \in H(\xi_{1:m})} \sum_{v \in V_{1/2}}\exp \left(
		\frac{\tau}{m} \cdot \sum_{i=1}^m \rad_i b_i u_{v}(\xi_i)\right) \right] \\
		&=  \sum_{b \in H(\xi_{1:m})} \sum_{v \in V_{1/2}} ~\prod_{i=1}^m \E_{\rad_{i}} \left[\exp \bigg(
		\frac{\tau}{m} \cdot \rad_i b_i u_{v}(\xi_i)\bigg) \right]
\end{split}
\end{equation}
where the symbol $\E_{\rad_{i}}$ means the expectation is taken solely over the Rademacher variables $\rad_{i}$. 

Given that for any $b_i \in \{0, 1\}$ and $q_i \in \R$, the expectation over $\rad_i$ obeys: 
\begin{equation*}
	\E_{\rad_i} [\exp(\rad_i b_i q_i)] = \cosh (b_i q_i) \le \cosh(q_i) = \E_{\rad_i}[\exp(\rad_i q_i)]
\end{equation*}
this allows us to upper bound the RHS in equation \eqref{eqn:expression-complicated}, removing the dependence on $b_{1:m}$: 
\begin{equation*}
\begin{split}
	V(\tau, \xi_{1:m}) &\le \sum_{v \in V_{1/2}} \sum_{b \in H(\xi_{1:m})} \prod_{i=1}^m \E_{\rad_{i}} \left[\exp \left(
		\frac{\tau}{m} \cdot\rad_i u_{v}(\xi_i)\right) \right] \\
		&= |H(\xi_{1:m})| \sum_{v \in V_{1/2}}  \prod_{i=1}^m \E_{\rad_{i}} \left[\exp \left(
		\frac{\tau}{m} \cdot \rad_i u_{v}(\xi_i)\right) \right].
\end{split}
\end{equation*} 
Given the bound we have established for $|H(\xi_{1:m})| \le (2m)^{|\VC(\W)|}$ in equation~\eqref{eqn:sauer-lemma-result}, we deduce:  
\begin{equation}
	V(\tau, \xi_{1:m}) \le (2m)^{\VC(\W)} \sum_{v \in V_{1/2}}  \prod_{i=1}^m \E_{\rad_{i}} \left[\exp \left(
		\frac{\tau}{m} \cdot \rad_i u_{v}(\xi_i)\right) \right].
\end{equation}
Notably, this bound applies to every possible $\xi_{1:m}$ value. 

Finally, we substitute this bound into inequality~\eqref{eqn:U-tau-second-bound}, resulting in: 
\begin{equation}
\begin{split}
	U(\tau) &\le \E_{\xi_{1:m}}[V(4\tau, \xi_{1:m})]  \le (2m)^{\VC(\W)} \sum_{v \in V_{1/2}}  \prod_{i=1}^m \E_{\rad_{i}, \xi_i} \left[\exp \left(
		\frac{4\tau}{m} \cdot \rad_i u_{v}(\xi_i)\right) \right]
\end{split} 
\end{equation}
where $\E_{\rad_i, \xi_i}$ denotes the joint expectation taken over $\rad_i$ and $\xi_i$. 

Since $u_{v}(\xi)$ is $\bar{\sigma}$-sub-exponential, then $\rad_i u_{v}(\xi_i)$ is also 
$\bar{\sigma}$-sub-exponential  for every $v$ by definition. Using~\cite[Proposition 2.7.1(e)]{Vershynin18}, this implies that,
for every $\theta_2$, $v$, $\tau \in (0, cm/\bar{\sigma})$: 
\begin{equation*}
	\E_{\rad_{i}, \xi_i} \left[\exp \left(
		\frac{4\tau}{m} \cdot \rad_i u_{v}(\xi_i)\right) \right] \le  \exp \left(C \frac{\bar{\sigma}^2\tau^2}{m^2}\right)
\end{equation*}
where $c, C > 0$ are absolute constants. 
Because $|V_{1/2}| \le 6^d$, 
we further obtain for $\tau \in (0, cm/\bar{\sigma})$: 
\begin{equation}
\label{eqn:bound-on-the-MGF}
	U(\tau) \le (2m)^{\VC(\W)} 6^d \exp \left(C \frac{\bar{\sigma}^2\tau^2}{m}\right).
\end{equation} 
Recall the definition of $U(\tau)$, which yields: 
\begin{equation*}
	\E^*\left[\exp \left(\tau \cdot 
		\sup_{z_1 \in \mathcal{Z}_1, t\in \R} \norm{W_m(z_1, t)}\right)\right] \le (2m)^{\VC(\W)} 6^d \exp \left(C \frac{\bar{\sigma}^2\tau^2}{m}\right).
\end{equation*} 
By applying Markov's inequality under the outer integral~\cite{VanDerVaartWe96} (which follows from the monotone property
of \emph{outer integral} discussed below Definition~\ref{definition:outer-integral}), we establish 
that there exists a universal constant $c > 0$ such that for every $u \ge 0$: 
\begin{equation*}
\begin{split} 
	\P^*\left(\sup_{z_1 \in \mathcal{Z}_1, t\in \R}   \norm{W_m(z_1, t)} \ge u\right) 
	&\le (2m)^{\VC(\W)} 6^d \cdot \exp \left(-c\min\left\{\frac{m u^2}{\bar{\sigma}^2}, \frac{mu}{\bar{\sigma}}\right\}\right).
\end{split} 
\end{equation*}
Equivalently, there exists a universal constant $C > 0$ such that for every $\delta \in (0, 1)$, the following 
happens with an inner probability at least $1-\delta$: 
\begin{equation*}
	\sup_{z_1 \in \mathcal{Z}_1, t \in \R}   \norm{W_m(z_1, t)}  \le C \bar{\sigma} \cdot \max\{\Delta(\delta), \Delta(\delta)^2\},
\end{equation*} 
where $\Delta(\delta)$ is defined in equation~\eqref{eqn:definition-Delta-delta}.
\end{proof}

\subsubsection{A Basic Bound} 
\label{sec:proof-a-basic-bound}
This section proves Lemma~\ref{lemma:basic-bound}. 
By definition: 
\begin{equation*}
\begin{split} 
	\Delta(\delta_k) &= \sqrt{\frac{1}{m} \cdot \left(d + \VC(\W) \log m + \log (\frac{1}{\delta}) + \log (2^{k+1}) + 
		\log |\mathcal{Z}_{2, \eps_k}| + \log |\mathcal{Z}_{2, \eps_{k-1}}|\right)}.
\end{split} 
\end{equation*}
Recall that $\log |\mathcal{Z}_{2, \eps_k}| \le d \log(2+r/\eps_k) \le d \log (2+2^k)$. This yields that 
\begin{equation*}
\begin{split} 
		\Delta(\delta_k) 
			&\le  \sqrt{\frac{1}{m} \cdot \left(d + \VC(\W) \log m + \log (\frac{1}{\delta})\right)} + \sqrt{\frac{12 kd}{m}} 
\end{split} 
\end{equation*} 
As a result, there exists a universal constant $c > 0$ such that
\begin{equation*}
\begin{split} 
	\sum_{k=1}^\infty \eps_k \Delta(\delta_k) &\le \left(\sum_{k=1}^\infty \eps_k\right) \cdot 
		\sqrt{\frac{1}{m} \cdot \left(d + \VC(\W) \log m + \log (\frac{1}{\delta})\right)}
		+  \sum_{k=1}^\infty \eps_k\sqrt{\frac{12 kd}{m}} \\
		&\le c r \cdot  \sqrt{\frac{1}{m} \cdot \left(d + \VC(\W) \log m + \log (\frac{1}{\delta})\right)} = cr \Delta(\delta).
\end{split} 
\end{equation*}
Similarly, one can prove the existence of a universal constant $c > 0$ such that
\begin{equation*}
	\sum_{k=1}^\infty \eps_k \Delta(\delta_k)^2 \le  cr \Delta(\delta)^2.
\end{equation*}
This then concludes the existence of a universal constant $c > 0$ such that 
\begin{equation*}
	\sum_{k=1}^\infty \eps_k \max\{\Delta(\delta_k), \Delta(\delta_k)^2\} \le \sum_{k=1}^\infty \eps_k \Delta(\delta_k) + \sum_{k=1}^\infty \eps_k \Delta(\delta_k)^2
		\le 2cr \max\{\Delta(\delta), \Delta(\delta)^2\}.
\end{equation*}

\subsubsection{A Result on Measurability}
\label{sec:tackle-measurability}
We prove Lemma~\ref{lemma:measurability-issue}. Let $\mathcal{Z}_1^o$ and $\R^o$ be any 
countable dense subsets of $\Z_1$ and $\R$, respectively. Our main observation is that the 
supremum over the uncountable sets $\Z_1, \R$ is equal to the supremum over the countable 
sets $\Z_1^o, \R^o$, holding for every data instances $\xi_{1:m}, \rad_{1:m}$: 
\begin{equation}
\label{eqn:reduce-uncountable-to-countable}
\sup_{z_1 \in \mathcal{Z}_1, t\in \R} \max_{v \in V_{1/2}} 
			\frac{1}{m}\sum_{i=1}^m \rad_i \hh_{z_1, t}(\xi_i) u_v(\xi_i) 
	=\sup_{z_1 \in \mathcal{Z}_1^o, t\in \R^o} \max_{v \in V_{1/2}} 
			\frac{1}{m}\sum_{i=1}^m \rad_i \hh_{z_1, t}(\xi_i) u_v(\xi_i) 
\end{equation}
Since the RHS is the pointwise supremum over \emph{countable} measurable functions of $\xi_{1:m}, \rad_{1:m}$, 
it is measurable. Therefore, the LHS must also be measurable under the product space. To complete 
the proof of Lemma~\ref{lemma:measurability-issue}, it suffices to prove equation~\eqref{eqn:reduce-uncountable-to-countable}.

Fix the values of $\xi_{1:m}$ and $\rad_{1:m}$. It suffices to show that for every $z_1 \in \Z_1$ and $t \in \R$, there exist 
$z_1^o \in \Z_1^o$ and $t^o \in \R^o$ such that for every $i = 1, 2, \ldots, m$:
\begin{equation}
\label{eqn:identities-measurability}
	\hh_{z_1, t}(\xi_i) = \hh_{z_1^o, t^o}(\xi_i).
\end{equation}
Recall that $\hh_{z_1, t}(\xi) = \one\{q(z_1, \xi) \ge t\}$.  First,  we note an important observation, which uses the fact that 
$t \mapsto \one\{w \ge t\}$ is left continuous for every real value $w$. For every $z_1$, 
there exists $t^o \in \R^o$ with $t^o < t$ such that for every $i = 1, 2, \ldots, m$:
\begin{equation*}
	\hh_{z_1, t}(\xi_i) = \hh_{z_1, t^o}(\xi_i)~~\text{and}~~q(z_1, \xi_i) - t^o \neq 0 .
\end{equation*} 
Fix this $t^o$. Next, since $z_1 \mapsto q(z_1, \xi)$ is continuous for every $\xi$, this implies the existence of $z_1^o \in \Z_1^o$ 
close to $z_1$ such that for every $i = 1, 2, \ldots, m$:
\begin{equation*}
	 \hh_{z_1, t^o}(\xi_i) =  \hh_{z_1^o, t^o}(\xi_i).
\end{equation*}
This proves that for every $z_1 \in \Z_1$ and $t \in \R$, there is the existence of the pair $z_1^o \in \Z_1^o$ and $t^o \in \R^o$
such that equation~\eqref{eqn:identities-measurability} holds for all $i = 1, 2, \ldots, m$.  This completes the proof of 
Lemma~\ref{lemma:measurability-issue}.

\section{Proof of Proposition~\ref{theorem:tight-nonsmooth-phase-retrieval-landscape}}
\label{sec:theorem-tight-nonsmooth-phase-retrieval-landscape}
Our approach comprises two main steps. First, we demonstrate that every 
stationary point of the empirical objective $\Phi_S$ must be approximately stationary 
for the population objective $\Phi$, leveraging the uniform convergence of 
subdifferentials in Theorem~\ref{corollary:phase-retrieval}. Second, we utilize established results from the 
literature~\cite[Section 5]{DavisDrPa20} on these (approximate) stationary points of $\Phi$ 
to pinpoint the location of empirical stationary points.

To set up the stage, we recall the location of the stationary point and approximate stationary point of the population objective $\Phi$
formally derived in~\cite{DavisDrPa20}. 

\vspace{.2cm}
\begin{proposition}[{\cite[Theorem 5.2]{DavisDrPa20}}]
\label{theorem:davis-dr-pa20-exact}
The stationary points of the population objective $\Phi$ are 
\begin{equation*}
	\{0\} \cup \{\pm \bar{x}\} \cup \{x: \langle x, \bar{x}\rangle = 0, \norm{x} = c\cdot \norm{\bar{x}}\}
\end{equation*} 
where $c > 0$ is the unique solution of the equation $\frac{\pi}{4} = \frac{c}{1+c^2} + \arctan(c)$. 
\end{proposition}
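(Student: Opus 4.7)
The plan is to exploit the orthogonal invariance of the standard Gaussian to reduce the problem to two real parameters, obtain a closed-form expression for $\Phi$, and then solve the resulting stationarity equations by a short case analysis. This largely parallels the approach of~\cite[Section 5]{DavisDrPa20}.

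First I will reduce to a two-variable problem. Since $a \sim \normal(0, I_d)$ is invariant under every orthogonal transformation fixing $\bar x$, the function $\Phi(x)$ depends only on $\norm{x}$ and $\langle x, \bar x\rangle$. By the scaling identity $\Phi(tx; t\bar x) = t^2 \Phi(x; \bar x)$, I may assume $\norm{\bar x} = 1$ and parameterize $x = \alpha \bar x + \beta u$ with $u$ a unit vector perpendicular to $\bar x$; by rotational symmetry, a point is stationary for the $d$-dimensional $\Phi$ iff $(\alpha, \beta)$ is stationary for the 2D reduction, and any 2D critical point with $\beta \neq 0$ corresponds to a rotationally invariant sphere in $\R^d$. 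Moreover, because the event $\langle a, x\rangle^2 = \langle a, \bar x\rangle^2$ has probability zero whenever $x \ne \pm \bar x$, dominated convergence gives that $\Phi$ is $\mathcal{C}^1$ on $\R^d \setminus \{\pm \bar x\}$, while $\pm \bar x$ are automatically stationary since they are global minimizers ($\Phi \ge 0$ with $\Phi(\pm \bar x) = 0$).

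Next I will derive the closed form. Setting $U = \langle a, x-\bar x\rangle$ and $V = \langle a, x+\bar x\rangle$, so $(U, V)$ is jointly Gaussian, gives $\Phi(x) = \E|Y^2 - Z^2| = \E|UV|$. Applying the classical identity
\begin{equation*}
\E|UV| = \frac{2}{\pi}\left[\sqrt{\sigma_U^2 \sigma_V^2 - \gamma^2} + \gamma \arcsin\left(\frac{\gamma}{\sigma_U \sigma_V}\right)\right], \quad \gamma := \text{Cov}(U, V),
\end{equation*}
and computing $\gamma = \alpha^2 + \beta^2 - 1$ and $\sigma_U^2 \sigma_V^2 - \gamma^2 = 4\beta^2$, yields
\begin{equation*}
\Phi(\alpha, \beta) = \frac{2}{\pi}\left[2|\beta| + (\alpha^2 + \beta^2 - 1)\,\psi\right], \quad \psi := \arcsin\!\left(\frac{\alpha^2 + \beta^2 - 1}{D}\right),
\end{equation*}
with $D := \sqrt{(\alpha^2+\beta^2-1)^2 + 4\beta^2}$. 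Direct differentiation (using $\cos\psi = 2|\beta|/D$) gives, after simplification,
\begin{equation*}
\tfrac{\pi}{2}\, \partial_\alpha \Phi = 2\alpha\left[\psi + \frac{2|\beta|(\alpha^2 + \beta^2 - 1)}{D^2}\right],
\end{equation*}
\begin{equation*}
\tfrac{\pi}{2}\, \partial_\beta \Phi = 2\,\text{sgn}(\beta)\left[1 + \frac{(\alpha^2 + \beta^2 - 1)(1 + \beta^2 - \alpha^2)}{D^2}\right] + 2\beta \psi.
\end{equation*}

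Finally I will carry out the case analysis. \textbf{(i)} If $\beta = 0$, then $\Phi(\alpha, 0) = |\alpha^2 - 1|$, whose only critical points are $\alpha = 0$ and $\alpha = \pm 1$, recovering $0, \pm \bar x$. \textbf{(ii)} If $\alpha = 0$ and $\beta \ne 0$, the first stationarity equation is automatic; and using the identity $\psi = 2\arctan|\beta| - \pi/2$ available at $\alpha = 0$, the second collapses to $\arctan |\beta| + |\beta|/(1+\beta^2) = \pi/4$, which after restoring the $\norm{\bar x}$ scaling (via $c = |\beta|/\norm{\bar x}$) is exactly the equation in the statement and produces the ring $\{x : \langle x, \bar x\rangle = 0, \norm{x} = c\norm{\bar x}\}$; the map $c \mapsto \arctan c + c/(1+c^2)$ has derivative $2/(1+c^2)^2 > 0$ on $[0, \infty)$, so $c$ is unique. \textbf{(iii)} The remaining case $\alpha \ne 0$ and $\beta \ne 0$ (WLOG $\beta > 0$) is the main obstacle: $\partial_\alpha \Phi = 0$ yields $\psi = -2\beta(\alpha^2+\beta^2-1)/D^2$, and substituting into $\partial_\beta \Phi = 0$ combined with the algebraic simplification $-2\beta^2 + 1 + \beta^2 - \alpha^2 = -(\alpha^2+\beta^2-1)$ collapses the equation to $D^2 = (\alpha^2+\beta^2-1)^2$; together with the defining identity $D^2 = (\alpha^2+\beta^2-1)^2 + 4\beta^2$ this forces $\beta = 0$, a contradiction. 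The cleanness of this elimination---once the closed form and partial derivatives are in hand---is what makes the full characterization tractable.
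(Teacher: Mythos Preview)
The paper does not prove this proposition; it is quoted verbatim from \cite[Theorem~5.2]{DavisDrPa20} and used as a black box. Your proposal supplies a self-contained proof along the lines of that reference, and the argument is correct: the reduction to two variables via rotational invariance, the closed form $\Phi(\alpha,\beta)=\tfrac{2}{\pi}\bigl[2|\beta|+(\alpha^2+\beta^2-1)\arcsin\!\bigl((\alpha^2+\beta^2-1)/D\bigr)\bigr]$, the partial derivatives, and the three-case elimination all check out (in particular, the case $\alpha\neq0,\beta\neq0$ collapses via $1+\beta^2-\alpha^2-2\beta^2=-(\alpha^2+\beta^2-1)$ to $D^2=\gamma^2$, forcing $\beta=0$). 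One small point worth making explicit for the reader: in case~(i) you need that $\partial_\beta\Phi(\alpha,0)=0$ as well, so that points on the $\bar x$-axis are genuinely stationary in the full space and not merely along $\bar x$; this follows from your formula for $\partial_\beta\Phi$ (the bracket vanishes at $\beta=0$ since $\gamma(1-\alpha^2)/D^2=-1$ there), but it is not stated.
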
 

The following Theorem~\ref{theorem:davis-dr-pa20} can be easily deduced from 
\cite[Theorem 5.2+Theorem 5.3]{DavisDrPa20}. Recall that $\mathcal{Z} = \{x: 0 \in \partial \Phi(x)\}$ denotes 
the set of stationary points of $\Phi$.

\vspace{.2cm}
\begin{proposition}[{\cite[Theorem 5.3]{DavisDrPa20}}]
\label{theorem:davis-dr-pa20}
There exist numerical constants $\gamma, C > 0$ such that the following holds. For any point $x \in \R^d$ with 
\begin{equation*}
	\dist(0; \partial \Phi(x)) \le \gamma \norm{x},
\end{equation*}
\vspace{.2cm}
it must be the case that $\norm{x} \le C\norm{\bar{x}}$ and $x$ satisfies $\dist(x, \mathcal{Z}) \le C \dist(0, \partial \Phi(x))$. 
\end{proposition}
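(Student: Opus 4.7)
The plan is to deduce Proposition~\ref{theorem:davis-dr-pa20} from its two supporting ingredients in Davis--Drusvyatskiy--Paquette: the exact stationary point classification (already quoted as Proposition~\ref{theorem:davis-dr-pa20-exact}) and the local linear growth/sharpness estimate of their Theorem 5.3. The whole argument reduces to a two-dimensional computation via the rotational invariance of $a \sim \normal(0, I)$: writing any $x \in \R^d$ as $x = \alpha \,\bar{x}/\norm{\bar{x}} + \beta\, u$ with $u$ a unit vector orthogonal to $\bar{x}$, both $\Phi(x)$ and every element of $\partial\Phi(x)$ depend on $x$ only through the triple $(\alpha, \beta, \norm{\bar{x}})$, and $\partial\Phi(x)$ is supported in $\text{span}\{\bar{x}, u\}$. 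The two scalar components of any subgradient selection reduce to explicit Gaussian double integrals $g_1(\alpha, \beta), g_2(\alpha, \beta)$.

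For the boundedness conclusion $\norm{x} \le C\norm{\bar{x}}$, the key observation is that once $\norm{x} \gg \norm{\bar{x}}$, the sign $\sign((a^T x)^2 - (a^T \bar{x})^2)$ equals $+1$ with probability tending to one, so any element of $\partial \Phi(x)$ is close to $2\,\E[(a^T x)\,a] = 2x$. A short Gaussian tail calculation would give $\dist(0, \partial\Phi(x))/\norm{x} \to 2$ as $\norm{x}/\norm{\bar{x}} \to \infty$, uniformly in the direction of $x$. Fixing $\gamma$ to be a small universal constant strictly below this limit then forces $\norm{x}/\norm{\bar{x}}$ to lie below a universal $C$ whenever the hypothesis $\dist(0, \partial \Phi(x)) \le \gamma \norm{x}$ holds.

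For the error bound $\dist(x, \mathcal{Z}) \le C\,\dist(0, \partial \Phi(x))$, I would restrict attention to the compact region $\{\norm{x} \le C\norm{\bar{x}}\}$ provided by the previous step and invoke DDP20 Theorem 5.3, which establishes a local linear growth bound $\dist(0, \partial\Phi(x)) \gtrsim \dist(x, \mathcal{Z})$ in a neighborhood (at scale proportional to $\norm{\bar{x}}$) of each point of $\mathcal{Z}$. The outer semicontinuity of $\partial\Phi$ together with compactness on $\{\norm{x} \le C\norm{\bar{x}}\}$ implies that $\dist(0, \partial\Phi(x))$ stays bounded away from zero outside any neighborhood of $\mathcal{Z}$, so shrinking $\gamma$ if necessary forces the hypothesis $\dist(0, \partial\Phi(x)) \le \gamma \norm{x}$ to place $x$ inside the region where Theorem 5.3 applies, which yields the stated global error bound on the compact region.

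The hardest step will be verifying the local linear growth around the nonsmooth stationary points, namely $\pm\bar{x}$ (where $(a^T x)^2 - (a^T\bar{x})^2$ vanishes on a positive-measure set, so $\partial \Phi$ genuinely becomes set-valued) and every point of the perpendicular ring $\{x : \langle x, \bar{x}\rangle = 0,\ \norm{x} = c\norm{\bar{x}}\}$. This requires a careful asymptotic analysis of how $g_1(\alpha,\beta)$ and $g_2(\alpha,\beta)$ behave as $(\alpha, \beta)$ approach a stationary $(\alpha^{\star}, \beta^{\star})$, distinguishing the tangential direction along the ring (where the subdifferential is set-valued and need not grow) from the normal directions (where linear growth must be exhibited). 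Rather than redo this two-dimensional geometry in detail, the cleanest route is to quote DDP20 Theorem 5.3 as a black box for this local sharpness estimate and reserve the new work for the uniform/compactness step that converts it into the global statement of the proposition.
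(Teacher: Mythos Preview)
The paper does not prove this proposition at all: it is stated as a direct citation of \cite[Theorem~5.3]{DavisDrPa20}, prefaced only by the remark that it ``can be easily deduced from \cite[Theorem 5.2+Theorem 5.3]{DavisDrPa20}.'' Your proposal is therefore not competing with any argument in the paper---you are sketching the deduction that the paper leaves implicit.

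That said, your sketch is a reasonable and essentially correct outline of how one would extract the statement from the DDP20 ingredients: the reduction to two variables via rotational invariance, the large-$\norm{x}$ asymptotic $\partial\Phi(x)\approx\{2x\}$ for the norm bound, and the compactness-plus-local-sharpness argument for the error bound are all sound. One small caution: in your compactness step you invoke outer semicontinuity of $\partial\Phi$ to conclude that $\dist(0,\partial\Phi(x))$ is bounded away from zero outside a neighborhood of $\mathcal{Z}$; to make this rigorous you need that $x\mapsto\dist(0,\partial\Phi(x))$ is lower semicontinuous (which follows from outer semicontinuity and local boundedness of $\partial\Phi$), and you should also handle the origin separately since $0\in\mathcal{Z}$ but the hypothesis $\dist(0,\partial\Phi(x))\le\gamma\norm{x}$ becomes vacuous as $x\to 0$. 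Neither point is a real obstacle, and your plan to quote DDP20 Theorem~5.3 as a black box for the local sharpness is exactly in line with how the paper treats the result.
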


The following result is immediate given~\cite[Corollary 6.3]{DavisDrPa20}.

\vspace{.2cm}
\begin{proposition}[{\cite[Corollary 6.3]{DavisDrPa20}}]
\label{proposition:stationary-points-empirical-phase-retrieval}
There exist a numerical constant $C > 0$ so that for $m \ge C d$, with an inner probability at least 
$1-2\exp(-d)$\footnote{The original expression in the paper~\cite{DavisDrPa20}, $1-2\exp(-\min\{c_2m, d^2\})$, should be corrected to 
$1-2\exp(-d)$. }, 
the set of stationary points $\mathcal{Z}_S$ of $\Phi_S$ satisfies: 
\begin{equation*}
	\mathbb{D}(\mathcal{Z}_S, \mathcal{Z}) \le C \sqrt[4]{\frac{m}{d}} \norm{\bar{x}}.
\end{equation*} 
\end{proposition}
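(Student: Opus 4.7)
The plan is to combine the uniform Hausdorff-distance bound on subdifferentials (Corollary~\ref{corollary:phase-retrieval}) with the error-bound-type calmness of the population subdifferential around $\mathcal{Z}$ (Proposition~\ref{theorem:davis-dr-pa20}). The role of Corollary~\ref{corollary:phase-retrieval} is to upgrade any $x\in \mathcal{Z}_S$, which satisfies $0\in\partial \Phi_S(x)$, into an \emph{approximately stationary} point of $\Phi$ with controlled slack; the role of Proposition~\ref{theorem:davis-dr-pa20} is then to convert that approximate stationarity into proximity to $\mathcal{Z}$.

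First I would show that, with high probability, every $x\in \mathcal{Z}_S$ lies in some ball $\B(0;R)$ with $R\le C_0\norm{\bar x}$ for a numerical constant $C_0$. Here only a qualitative confinement is needed: Proposition~\ref{proposition:stationary-points-empirical-phase-retrieval} combined with Proposition~\ref{theorem:davis-dr-pa20-exact} (which forces $\mathcal{Z}\subseteq\B(0;c\norm{\bar x})$, as every listed population stationary point has norm at most $\norm{\bar x}$) places $\mathcal{Z}_S$ inside a ball of radius $O(\norm{\bar x})$ whenever $m\ge Cd$. This containment is used merely to legalize applying the uniform convergence result on a bounded region, and does not feed back into the quantitative rate.

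Next I would invoke Corollary~\ref{corollary:phase-retrieval} on $\B(0;R)$ with $\delta=\exp(-cd)$. Since $a\sim\normal(0,I)$ is $O(1)$-subgaussian and $R\asymp\norm{\bar x}$, absorbing constants yields
\begin{equation*}
\sup_{x\in\B(0;R)}\H(\partial \Phi(x),\partial \Phi_S(x))\le \eta,\quad \eta := C\norm{\bar x}\left(\sqrt{\tfrac{d\log d\,\log m}{m}}+\tfrac{d\log d\,\log m}{m}\right),
\end{equation*}
with inner probability at least $1-C\exp(-cd)$ (the $\log(1/\delta)=cd$ term is absorbed into the $d\log d\,\log m$ factor). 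For any $x\in \mathcal{Z}_S$, the inclusion $0\in\partial \Phi_S(x)$ then gives $\dist(0,\partial \Phi(x))\le \eta$. A case split on $\norm{x}$ finishes the argument: if $\norm{x}\ge \eta/\gamma$, then $\dist(0,\partial \Phi(x))\le \gamma\norm{x}$, so Proposition~\ref{theorem:davis-dr-pa20} yields $\dist(x,\mathcal{Z})\le C\dist(0,\partial \Phi(x))\le C\eta$; otherwise $\norm{x}<\eta/\gamma$, and since $0\in\mathcal{Z}$ (Proposition~\ref{theorem:davis-dr-pa20-exact}), $\dist(x,\mathcal{Z})\le \norm{x}\le \eta/\gamma$. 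Either branch delivers $\dist(x,\mathcal{Z})\le C\eta$, which is exactly the claimed rate uniformly over $x\in\mathcal{Z}_S$.

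The main obstacle I anticipate is the case split near $\norm{x}\sim \eta/\gamma$. Proposition~\ref{theorem:davis-dr-pa20} is only a \emph{local} error bound, conditional on $\dist(0,\partial \Phi(x))\le \gamma\norm{x}$, so it cannot by itself rule out empirical stationary points of very small norm that might otherwise be far from $\{\pm\bar x\}$ or from the radius-$c\norm{\bar x}$ orthogonal circle. The rescue is simply that $0\in\mathcal{Z}$, which reduces the small-norm regime to a triangle-inequality bound against the origin. A secondary concern is that the confinement step in the second paragraph must not rely on the very bound we are proving; this is handled by using the coarse $m^{-1/4}$-type output of Proposition~\ref{proposition:stationary-points-empirical-phase-retrieval} purely to obtain a qualitative $O(\norm{\bar x})$ radius, after which the sharper rate $\eta$ comes entirely from Corollary~\ref{corollary:phase-retrieval}.
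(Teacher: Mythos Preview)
There is a fundamental mismatch here. Proposition~\ref{proposition:stationary-points-empirical-phase-retrieval} is not proved in this paper at all: it is quoted verbatim from \cite[Corollary~6.3]{DavisDrPa20} as an external input, and the paper offers no argument for it. What you have written is, almost step for step, the paper's proof of Proposition~\ref{theorem:tight-nonsmooth-phase-retrieval-landscape}---the \emph{sharper} $\sqrt{d/m}$-type bound that the paper establishes as its own contribution. Your final quantity $\eta$ is of order $\norm{\bar x}\sqrt{d\log d\,\log m/m}$, which does not match the $\sqrt[4]{\cdot}$ rate in the stated proposition.

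More seriously, your argument is circular: in the confinement step you explicitly invoke Proposition~\ref{proposition:stationary-points-empirical-phase-retrieval} itself to place $\mathcal{Z}_S$ inside a ball of radius $O(\norm{\bar x})$. You cannot use the statement you are asked to prove as an ingredient, even ``only qualitatively.'' In the paper's actual logic the dependency runs the other way: Proposition~\ref{proposition:stationary-points-empirical-phase-retrieval} (taken from \cite{DavisDrPa20}) supplies the coarse confinement, and then Corollary~\ref{corollary:phase-retrieval} together with Proposition~\ref{theorem:davis-dr-pa20} refine the rate, yielding Proposition~\ref{theorem:tight-nonsmooth-phase-retrieval-landscape}. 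If the task were to prove Proposition~\ref{theorem:tight-nonsmooth-phase-retrieval-landscape}, your sketch would be essentially correct and aligned with the paper; for the proposition actually stated, the correct response is simply that it is a cited result whose proof lies in \cite{DavisDrPa20} via Moreau-envelope and Attouch-type epigraphical arguments, not via the tools of this paper.
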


We shall build on top of these existing results to establish Proposition~\ref{theorem:tight-nonsmooth-phase-retrieval-landscape}.
First, Proposition~\ref{theorem:davis-dr-pa20-exact} and 
Proposition~\ref{proposition:stationary-points-empirical-phase-retrieval} imply that there exists a numerical 
constant $C_1  > 0$ such that: 
\begin{equation}
\label{eqn:high-probability-bound-on-phase-retrieval-all-points}
	\P_*\left(\norm{x} \le C_1\norm{\bar{x}}~~\forall x \in \mathcal{Z}_S\right) \ge 1-2e^{-d}.  
\end{equation}
Corollary~\ref{corollary:phase-retrieval-peeling} refines 
Corollary~\ref{corollary:phase-retrieval} using a standard peeling argument to substitute 
the norm $\norm{x}$ for the constant radius $r$ in the original probability expression in equation~\eqref{eqn:high-probability-bound-phase-retrieval}.
This adjustment allows varying distances $x$ from the origin, ranging from $\underline{r}$ to $r$, 
thus giving bounds on Hausdorff distances between $\partial \Phi$ and $\partial \Phi_S$ over different scales. 
For completeness, we give its proof in Section~\ref{sec:proof-corollary-phase-retrieval-peeling}.

\vspace{.5cm}

\begin{corollary}
\label{corollary:phase-retrieval-peeling}
Assume the measurement vector $a$ is $\sigma$-subgaussian, and $\E[|b|^2] < \infty$. 

Then there exists a universal constant $C > 0$ such that for every $\delta \in (0, 1)$ and $r \ge \underline{r} > 0$: 
\begin{equation}
\label{eqn:high-probability-bound-phase-retrieval-peeling}
	\P_* \left( \H(\partial \Phi(x), \partial \Phi_S(x)) \le C\sigma \norm{x}\cdot 
		\max\{\tilde{\Delta}_\Phi, \tilde{\Delta}_\Phi^2\}~~\forall x~\text{satisfying}~\underline{r} \le \norm{x} \le r\right) \ge 1-\delta.
\end{equation}
In the above,  
\begin{equation}
	\tilde{\Delta}_\Phi = \sqrt{\frac{1}{m} \cdot \left(d \log d\log m+ \log(\frac{\log(er/\underline{r})}{\delta})\right)}.
\end{equation}
\end{corollary}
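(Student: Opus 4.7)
The plan is to apply a standard dyadic peeling argument directly to Corollary~\ref{corollary:phase-retrieval}. I would partition the annulus $\{x : \underline{r} \le \norm{x} \le r\}$ into dyadic shells $S_k = \{x : r_k \le \norm{x} \le r_{k+1}\}$ with $r_k = \underline{r} \cdot 2^k$, for $k = 0, 1, \ldots, K-1$, choosing $K = \lceil \log_2(r/\underline{r}) \rceil$ so that $r_K \ge r$. The number of shells satisfies $K \le C \log(er/\underline{r})$ for a universal constant $C$, and every $x$ in the annulus lies in at least one shell; in particular, the shell containing $x$ satisfies $r_{k+1} \le 2\norm{x}$.

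On each closed ball $\{\norm{x} \le r_{k+1}\}$ I would invoke Corollary~\ref{corollary:phase-retrieval} with radius $r_{k+1}$ and confidence level $\delta_k := \delta/K$. Writing $\Delta_\Phi(\delta_k)$ for the error term in that corollary evaluated at $\delta_k$, and $E_k$ for the event
\begin{equation*}
	\sup_{\norm{x} \le r_{k+1}} \H(\partial \Phi(x), \partial \Phi_S(x)) > C\sigma^2 r_{k+1} \max\{\Delta_\Phi(\delta_k), \Delta_\Phi(\delta_k)^2\},
\end{equation*}
Corollary~\ref{corollary:phase-retrieval} gives $\P^*(E_k) \le \delta_k$. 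Since $\P^*$ is countably subadditive, $\P^*(\bigcup_{k=0}^{K-1} E_k) \le \sum_k \delta_k \le \delta$, so the complement occurs with inner probability at least $1-\delta$.

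On that complement, for any $x$ with $\underline{r} \le \norm{x} \le r$, picking the shell index $k$ containing $x$ and using $r_{k+1} \le 2\norm{x}$ gives
\begin{equation*}
	\H(\partial \Phi(x), \partial \Phi_S(x)) \le 2C\sigma^2 \norm{x} \max\{\Delta_\Phi(\delta_k), \Delta_\Phi(\delta_k)^2\}.
\end{equation*}
It remains to observe that $\log(1/\delta_k) = \log(K/\delta) \le \log(\log(er/\underline{r})/\delta) + \log C$, so that $\Delta_\Phi(\delta_k) \le C' \tilde{\Delta}_\Phi$ uniformly in $k$, where $\tilde{\Delta}_\Phi$ is the error term stated in the corollary. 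Absorbing numerical constants into a single $C$ yields the claimed bound (with the cosmetic mismatch between $\sigma$ in the statement and $\sigma^2$ in Corollary~\ref{corollary:phase-retrieval} folded into the universal constant).

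The substantive point to get right — rather than a deep obstacle — is the measurability bookkeeping: each event $E_k$ may fail to be measurable, so the union bound must be carried out under $\P^*$ using its subadditivity, and the final conclusion expressed as an inner-probability statement on the complement $\bigcap_k E_k^c$. The only mildly delicate choice is how to split the budget $\delta$ across $K$ shells so that the extra $\log K$ factor folds cleanly into $\log(\log(er/\underline{r})/\delta)$; the uniform split $\delta_k = \delta/K$ is the cleanest and matches the form of $\tilde{\Delta}_\Phi$ stated in the corollary.
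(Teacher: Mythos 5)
Your proposal is correct and uses essentially the same peeling argument as the paper: the paper builds a geometric sequence of radii with ratio at most $e$ (rather than $2$) between consecutive scales so that $N \le \log(er/\underline{r})$, takes confidence $\delta/N$ per scale, unions over scales under $\P^*$, and uses $r^{(i(x))}/\norm{x}\le e$ exactly as you use $r_{k+1}\le 2\norm{x}$; the dyadic-versus-base-$e$ distinction is purely a choice of constants. Incidentally, your parenthetical remark about $\sigma$ versus $\sigma^2$ points at a genuine small inconsistency carried over from the paper (Corollary~\ref{corollary:phase-retrieval} has $\sigma^2$ but the peeling corollary and its proof write $\sigma$); since $\sigma$ is a parameter rather than a universal constant it cannot actually be folded into $C$, and the cleaner statement of Corollary~\ref{corollary:phase-retrieval-peeling} should read $C\sigma^2\norm{x}$.
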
 

\vspace{0.2cm} 
We are now ready to prove Proposition~\ref{theorem:tight-nonsmooth-phase-retrieval-landscape}. We pick 
\begin{equation*}
	\underline{r} = C_1\sqrt{\frac{d}{m}} \norm{\bar{x}},~~~r = C_1\norm{\bar{x}},~~~\delta = e^{-d},~~~\sigma =4
\end{equation*}
in Corollary~\ref{corollary:phase-retrieval-peeling}. We then derive for some absolute constant $C_2$: 
\begin{equation}
\label{eqn:peeling-event}
\begin{split} 
	&\P_*\bigg(
		\H(\partial \Phi(x), \partial \Phi_S(x)) \le C_2 \norm{x}\cdot 
		\max\{\Delta_0, \Delta_0^2\}, \\
		&~~~~~~~~~~~~~~~~~~~~~~~~~~~~~~~~~~~~\forall x~\text{satisfying}~C_1\sqrt{d/m}\norm{\bar{x}} \le \norm{x} \le C_1\norm{\bar{x}} \bigg) \ge 1-e^{-d}
\end{split} 
\end{equation} 
where 
\begin{equation*}
	\Delta_0 = \sqrt{\frac{d}{m}  \log d\log m}.
\end{equation*} 
We can without loss of generality assume that 
$C_2 \max\{\Delta_0, \Delta_0^2\} \le \gamma$ where 
$\gamma$ is the numerical constant specified in Theorem~\ref{theorem:davis-dr-pa20}, since otherwise 
Proposition~\ref{theorem:tight-nonsmooth-phase-retrieval-landscape} trivially follows 
equation~\eqref{eqn:high-probability-bound-on-phase-retrieval-all-points}.

With equations~\eqref{eqn:high-probability-bound-on-phase-retrieval-all-points} and~\eqref{eqn:peeling-event}, 
we know that with an inner probability at least $1-3e^{-d}$, the following simultaneously happen: 
\begin{enumerate}
\item $\norm{x} \le C_1\norm{\bar{x}}$ for every $x \in \mathcal{Z}_S$.
\item $\H(\partial \Phi(x), \partial \Phi_S(x)) \le C_2 \norm{x}\cdot 
		\max\{\Delta_0, \Delta_0^2\}$ for every $x$ with $C_1\sqrt{d/m}\norm{\bar{x}} \le \norm{x} \le C_1\norm{\bar{x}}$.
\end{enumerate}
Consequentially, on this event, any stationary point $x^*$ of $\Phi_S$ must either obey: 
\begin{equation*}
	\norm{x^*} \le C_1\sqrt{d/m}\norm{\bar{x}}
\end{equation*}
or satisfies 
\begin{equation*}
	\H(\partial \Phi(x^*), \partial \Phi_S(x^*)) \le C_2 \norm{x^*}\cdot 
		\max\{\Delta_0, \Delta_0^2\}.
\end{equation*}
In the first case, since $0 \in \mathcal{Z}$ by Theorem~\ref{theorem:davis-dr-pa20-exact}, we immediately obtain: 
\begin{equation*}
	\dist(x^*, \mathcal{Z})  \le \norm{x^*} \le  C_1\sqrt{d/m}\norm{\bar{x}}.
\end{equation*}
In the second case, since $0 \in \partial \Phi_S(x^*)$, then $\dist(0, \partial \Phi(x^*)) \le \H(\partial \Phi(x^*), \partial \Phi_S(x^*))$
by definition of Hausdorff distance, and thus: 
\begin{equation*}
	\dist(0, \partial \Phi(x^*)) \le C_2 \norm{x^*}\cdot 
		\max\{\Delta_0, \Delta_0^2\}.
\end{equation*}
Since we have assumed that $C_2 \max\{\Delta_0, \Delta_0^2\} \le \gamma$, by Theorem~\ref{theorem:davis-dr-pa20} we obtain that 
$x^*$ must satisfy $\dist(x^*, \mathcal{Z}) \le C_3 \dist(0, \partial \Psi(x^*))$ for some numerical constant $C_3 > 0$, and thereby, 
\begin{equation*}
\begin{split} 
	\dist(x^*, \mathcal{Z}) &\le C_3 \dist(0, \partial \Psi(x^*)) 
		\le C_2 C_3 \norm{x^*}\cdot \max\{\Delta_0, \Delta_0^2\} \le C_1 C_2 C_3\norm{\bar{x}}\cdot \max\{\Delta_0, \Delta_0^2\}
\end{split} 
\end{equation*}
where the last inequality uses the fact that  $\norm{x} \le C_1\norm{\bar{x}}$ for every $x \in \mathcal{Z}_S$ on this event.

Summarizing, we have established with an inner probability at least $1-3e^{-d}$, any stationary point $x^*$ of $\Phi_S$ must obey: 
\begin{equation*}
	\dist(x^*, \mathcal{Z}) \le C \norm{\bar{x}} \cdot \max\{\Delta_0, \Delta_0^2\}
\end{equation*}
where $C > 0$ is an absolute constant. This completes the proof of Proposition~\ref{theorem:tight-nonsmooth-phase-retrieval-landscape}.

\subsection{Proof of Corollary~\ref{corollary:phase-retrieval-peeling}}
\label{sec:proof-corollary-phase-retrieval-peeling}
We can define a sequence $\{r^{(i)}\}_{i=1}^N$ so that $r^{(0)} = \underline{r}$, $r^{(N)} = r$, $r^{(i+1)}/r^{(i)} \le e$ and $N \le \log(er/\underline{r})$.
Under the condition of Corollary~\ref{corollary:phase-retrieval-peeling}, we can apply Theorem~\ref{corollary:phase-retrieval} and 
obtain for every $1\le i \le N$
\begin{equation*}
	\P^* \left( \sup_{x: \norm{x} \le r^{(i)}} \H(\partial \Phi(x), \partial \Phi_S(x)) \ge C\sigma r^{(i)}\cdot 
		\max\{\tilde{\Delta}_\Phi, \tilde{\Delta}_\Phi^2\}\right) \le \delta/N
\end{equation*}
In the above, $C$ is a universal constant. 
A union bound shows that: 
\begin{equation*}
 \P_* \left(\sup_{x: \norm{x} \le r^{(i)}} \H(\partial \Phi(x), \partial \Phi_S(x)) \le C\sigma r^{(i)}\cdot 
		\max\{\tilde{\Delta}_\Phi, \tilde{\Delta}_\Phi^2\}~~\text{for every $1\le i\le N$}\right) \ge 1-\delta.
\end{equation*}
On this event, for every $x$ with $\underline{r} \le \norm{x} \le r$, we find $1\le i(x) \le N$ such that 
$r^{(i(x)-1)} \le \norm{x} \le r^{(i(x))}$, and then we obtain that 
\begin{equation*}
	 \H(\partial \Phi(x), \partial \Phi_S(x)) \le C\sigma r^{(i(x))}\cdot 
		\max\{\tilde{\Delta}_\Phi, \tilde{\Delta}_\Phi^2\} \le Ce\sigma \norm{x} \max\{\tilde{\Delta}_\Phi, \tilde{\Delta}_\Phi^2\}
\end{equation*}
where the last inequality is due to $r^{(i(x))}/\norm{x} \le r^{(i(x))}/r^{(i(x)-1)} \le e$ by construction. Hence, we have proven the existence of a
numerical constant $C' > 0$ such that: 
\begin{equation*}
	 \P_* \left( \H(\partial \Phi(x), \partial \Phi_S(x)) \le C'\sigma \norm{x} \max\{\tilde{\Delta}_\Phi, \tilde{\Delta}_\Phi^2\}~~\text{for all $x$ such that $\underline{r} \le \norm{x} \le r$}\right) \ge 1-\delta.
\end{equation*} 
This completes the proof of Corollary~\ref{corollary:phase-retrieval-peeling}.

\section{Proof of Corollary~\ref{corollary:blind-deconvolution}}
We apply Theorem~\ref{theorem:final-result}. We first check the assumptions. 
For the nonsmooth $\loss(z) = |z|$, it is decomposed 
as $\loss = \loss \ns + \loss\sm$ where $\loss\sm(z) = -z$ and $\loss\ns(z) = 2(z)_+$.
\begin{itemize}
\item (Assumption~\ref{assumption:sub-exponential-assumption}). By definition,
	\begin{equation*}
		\grad c(x, \xi) = 
		\begin{bmatrix}
			\partial_{y} c(x, \xi)\\
			\partial_{w} c(x, \xi)
		\end{bmatrix}= \begin{bmatrix}
			\langle v, w\rangle u \\
			\langle u, y\rangle v
		\end{bmatrix},~~~(\loss\sm)'(c(x; \xi)) \grad c(x; \xi) = - \grad c(x, \xi).
	\end{equation*}
	At the center $x = (0, 0) \in \R^{d_1} \times \R^{d_2}$, the gradients vanish $\grad c(0, \xi) = (\loss\sm)'(c(0; \xi)) \grad c(0; \xi) = 0$. 
\item (Assumption~\ref{assumption:Lipschitz-condition}). It is easy to verify that, for $x_1 = (y_1, w_1)$ and 
	$x_2 = (y_2, w_2)$: 
	\begin{equation*}
	\begin{split} 
		\grad c(x_1, \xi) - \grad c(x_2, \xi) &= \begin{bmatrix}
			\langle v, w_1 - w_2\rangle u \\
			\langle u, y_1 - y_2 \rangle v
		\end{bmatrix} \\
		e(x_1, \xi) - e(x_2, \xi) &= \begin{bmatrix}
			\langle v, w_2 - w_1\rangle u \\
			\langle u, y_2 - y_1 \rangle v
		\end{bmatrix}
	\end{split},
	\end{equation*}
	where $e(x, \xi) = (\loss\sm)'(c(x; \xi)) \grad c(x; \xi)$. 
	Given the property that the product of two subgaussian random variables is 
	subexponential~\cite[Lemma 2.7.7]{Vershynin18}, and that the sum of two 
	subexponential random vectors remain subexponential~\cite[Lemma 2.7.11]{Vershynin18}, 
	one can easily show that $\grad c(x_1, \xi) - \grad c(x_2, \xi)$ and $e(x_1, \xi) - e(x_2, \xi)$ 
	are $C \sigma^2 \norm{x_1 - x_2}$ subexponential vectors where $C$ is absolute. 

\item (Assumption~\ref{assumption:real-Lipschitz-condition}). It is easy to verify that 
	\begin{equation*}
	\begin{split} 
		\norm{\grad c(x_1, \xi) - \grad c(x_2, \xi)} &\le 2 \norm{u}\norm{v} \norm{x_1 - x_2} \\
		\norm{e(x_1, \xi) - e(x_2, \xi)} &\le 2 \norm{u}\norm{v} \norm{x_1 - x_2}
	\end{split},
	\end{equation*}
	where $e(x, \xi) = (h\sm)'(c(x; \xi)) \grad c(x; \xi)$. 	Since $u, v$ are independent $\sigma$-subgaussian
	random vectors, 
	$\E[L(\xi)]  = 2\E[\norm{u}\norm{v}] < \infty$.
	Thus, Assumption~\ref{assumption:real-Lipschitz-condition} is satisfied 
	with $L(\xi) = 2\norm{u}\norm{v}$.

\item (Integrability). Notably $\E[h(c(x; \xi))]  = \E[|b-\langle u, y \rangle \langle v, w \rangle|] < \infty$ for every $x = (y, w) \in \R^{d_1} \times \R^{d_2}$. 
\end{itemize}
We then compute the bound in Theorem~\ref{theorem:final-result}. Namely, we need to compute $\zeta$ and $\VC(\F)$.

For the nonsmooth function $\loss(z) = |z|$, the corresponding value of $\zeta$ is given by $\zeta = 3$ 
as $0$ is the only nondifferentiable point of $\loss$. The corresponding $\F$ is given by 
\begin{equation*}
	\F = \left\{ \{u \in \R^{d_1}, v \in \R^{d_2}, b \in \R: \langle u, y\rangle \langle v, w\rangle - b \ge t \}\mid y \in \R^{d_1}, w \in \R^{d_2}, t \in \R \right\}.
\end{equation*}
We now bound its VC dimension using Theorem~\ref{theorem: VC-bounds-on-polynomials}. 
Note $(y, w) \mapsto  \langle u, y\rangle \langle v, w\rangle - b$ 
is a degree $2$ polynomial in $(y, w)$ for every $(u, v, b)$. 
Theorem~\ref{theorem: VC-bounds-on-polynomials} implies for some universal constant $C > 0$: 
\begin{equation*}
	\VC(\F) \le C (d_1+d_2) \log (d_1+ d_2). 
\end{equation*}
Corollary~\ref{corollary:blind-deconvolution} then follows by applying Theorem~\ref{theorem:final-result}.

\section{Proof of Corollary~\ref{corollary:matrix-sensing}}
\label{sec:proof-corollary-matrix-sensing}
We apply Theorem~\ref{theorem:final-result}. We first check the assumptions. 
For the nonsmooth $\loss(z) = |z|$, it is decomposed 
as $\loss = \loss \ns + \loss\sm$ where $\loss\sm(z) = -z$ and $\loss\ns(z) = 2(z)_+$.
\begin{itemize}
\item (Assumption~\ref{assumption:sub-exponential-assumption}). 
	For the matrix sensing problem, 
	\begin{equation*}
		\grad c(X, \xi) = (A+A^T)X,~~~(\loss\sm)'(c(X; \xi)) \grad c(X; \xi) = -(A+A^T)X.
	\end{equation*}
	Thus, at $X = 0 \in \R^{d \times r}$, both gradients vanish: $\grad c(0, \xi) = (\loss\sm)'(c(0; \xi)) \grad c(0; \xi) = 0$. 

\item (Assumption~\ref{assumption:Lipschitz-condition}). It is easy to verify that 
	\begin{equation*}
	\begin{split} 
		\grad c(X_1, \xi) - \grad c(X_2, \xi) &= (A+A^T)(X_1-X_2) \\
		e(X_1, \xi) - e(X_2, \xi) &= (A+A^T)(X_2-X_1)
	\end{split},
	\end{equation*}
	where $e(X, \xi) = (\loss\sm)'(c(X; \xi)) \grad c(X; \xi)$. 
	For every matrix $V \in \R^{D \times D}$, $\langle A, V\rangle$ is $\sigma \norm{V}_F$-subgaussian. 
	Given the property that the sum of two subgaussian random variables is still subgaussian~\cite[Exercise 2.5.7]{Vershynin18}, 
	we deduce that $\langle A+A^T, V\rangle$ is $C\norm{V}_F$-subgaussian for every matrix $V$, where $C > 0$ is a universal constant. As a consequence, 
	we derive that, each of 
	\begin{equation*}
	\begin{split} 
		\langle V, \grad c(X_1, \xi) - \grad c(X_2, \xi)\rangle &= \langle (A+A^T), (X_1-X_2)^T V\rangle \\
		\langle V, e(X_1, \xi) - e(X_2, \xi) \rangle &= \langle (A+A^T), (X_2-X_1)^T V\rangle
	\end{split},
	\end{equation*}
	must be $C \sigma \norm{X_1 - X_2}_F$ subgaussian, and thus $C' \sigma \norm{X_1 - X_2}_F$ subexponential
	 for every matrix $V \in \R^{D \times D}$ with $\norm{V}_F = 1$ for some universal constant $C'  > 0$. 
	 Here we use the fact that every sub-gaussian 
	distribution is subexponential~\cite[Section 2.7]{Vershynin18}.

\item (Assumption~\ref{assumption:real-Lipschitz-condition}). It is easy to verify that 
	\begin{equation*}
	\begin{split} 
		\norm{\grad c(X_1, \xi) - \grad c(X_2, \xi)}_F &= \norm{(A+A^T)(X_1-X_2)}_F \le 2 \norm{A}_F \norm{X_1-X_2}_F \\
		\norm{e(X_1, \xi) - e(X_2, \xi)}_F &= \norm{(A+A^T)(X_2-X_1)}_F \le 2 \norm{A}_F \norm{X_1-X_2}_F
	\end{split},
	\end{equation*}
	where $e(X, \xi) = (h\sm)'(c(X; \xi)) \grad c(X; \xi)$. Since $A$ is $\sigma$-subgaussian, $\E[\norm{A}_F] < \infty$.
	As a result, Assumption~\ref{assumption:real-Lipschitz-condition} is satisfied with $L(\xi) = 2\norm{A}_F$.
\item (Integrability). Notably $\E[h(c(X; \xi))] = \E[|b-\langle A, X \rangle^2|] < \infty$ for every $x \in \R^d$. 
\end{itemize}
We then compute the bound in Theorem~\ref{theorem:final-result}. Namely, we need to compute $\zeta$ and $\VC(\F)$.

For the nonsmooth function $\loss(z) = |z|$, the corresponding value of $\zeta$ is given by $\zeta = 3$
as $0$ is the only nondifferentiable point of $\loss$. The corresponding $\F$ is given by 
\begin{equation*}
	\F = \left\{ \{A \in \R^{D \times D}, b \in \R: \langle A, XX^T\rangle - b \ge t \}\mid X \in \R^{D \times r_0}, t \in \R \right\}.
\end{equation*}
Note that $X \mapsto \langle A, XX^T\rangle - b$ is a degree $2$ polynomial in $X$ for every $A, b$. 
We then bound its VC dimension using Theorem~\ref{theorem: VC-bounds-on-polynomials}, which yields that for some 
universal constant $C > 0$: 
\begin{equation*}
	\VC(\F) \le C Dr_0 \log (Dr_0). 
\end{equation*}
Corollary~\ref{corollary:matrix-sensing} then follows by applying Theorem~\ref{theorem:final-result}.

\end{appendices}


\bibliography{sn-bibliography}

\end{document}